\documentclass[leqno,11pt]{amsart}
\usepackage{amsmath}
\usepackage{graphicx, color}
\usepackage{amscd}
\usepackage{amsfonts}
\usepackage{amssymb}
\usepackage{mathrsfs}
\usepackage{mathtools}
\usepackage{ulem}
\usepackage{needspace}

\newcommand{\bel}{\begin{equation} \label}

\newcommand{\cbl}{\color{black}}

\newtheorem{thm}{Theorem}[section]

\newtheorem{lem}[thm]{Lemma}
\newtheorem{prop}[thm]{Proposition}

\newtheorem{rem}{Remark}[section]

\numberwithin{equation}{section}
\newcommand{\ep}{\varepsilon}
\newcommand\be{\begin{equation}}
\newcommand\ee{\end{equation}}
\newcommand\R{\mathbb R}

\newcommand\ds{\displaystyle}

\allowdisplaybreaks
\def\eps{\varepsilon}

\newcommand\rn{\R^n}

\newcommand\ld{\mathcal{L}}
\newcommand\ldo{\mathcal{L}_0}

\newcommand\ldu{\ld u}

\title[Differential Harnack inequalities and quantitative maximum principles]
{Differential Harnack inequalities and\\   maximum principles of Morel-Oswald type\\   for  elliptic PDE in divergence form}

\author[Sirakov]{Boyan Sirakov}
\address{PUC-Rio, Departamento de Matematica \\
Rua Marqu\^es de S\~ao Vicente 225 \\
G\'avea, Rio de Janeiro - CEP 22451-900, Brazil}
\email{bsirakov@puc-rio.br}

\author[Souplet]{Philippe Souplet}%
\address{Universit\'e Sorbonne Paris Nord,
CNRS UMR 7539, Laboratoire Analyse, G\'{e}om\'{e}trie et Applications,
93430 Villetaneuse, France}
\email{souplet@math.univ-paris13.fr}

\begin{document}

\begin{abstract}   This paper presents 
 two types of novel, qualitative and quantitative, estimates for positive solutions of general uniformly elliptic PDEs in divergence form.
First, we prove a Morel-Oswald type of extension of the  Hopf-Oleinik lemma, in which in addition we specify the
 sharp dependence of the constant in the data of the operator and the size of the domain. Second, we establish a new differential Harnack  (logarithmic gradient) estimate for non-homogeneous equations, as well as an optimal global differential Harnack estimate  for homogeneous equations.
\end{abstract}

\maketitle

\section{Introduction}\label{sec-intro}
This paper is a contribution to the study of  qualitative and quantitative estimates for positive (super-)solutions of
uniformly elliptic  equations in general divergence form
\begin{equation}\label{defdiv}
\ldu:=\mathrm{div}(A(x)\nabla u +  b_1(x)u) + b_2(x)\cdot \nabla u +c(x) u = f(x),
\end{equation}
in a bounded domain $\Omega\subset\rn$  with $C^{1,\bar\alpha}$-boundary
for some $\bar\alpha\in (0,1]$.
Since our results involve gradient estimates we make the following classical assumptions on the coefficients (possible generalizations are discussed below):
\begin{eqnarray}
&\hskip 1cm\hbox{$A(x)$ is a  matrix such that $\Lambda I \ge A\ge\lambda I$ in $\Omega$, for some $\Lambda\ge \lambda>0$,} \label{hyp1} \\
\noalign{\vskip 1mm}
&\hskip 1cm\hbox{$A, b_1\in C^{\alpha}(\Omega)$} , \quad\hbox{$ b_2, c,f \in L^q(\Omega)$}, \quad\hbox{for some $q\in(n,\infty]$,  
$\alpha\in(0,1)$,} \label{hyp2}
\end{eqnarray}
which guarantee that the weak ($u\in H^1(\Omega)$) solutions of \eqref{defdiv} are continuously differentiable.

We  study {\it pointwise gradient bounds for positive solutions}. Specifically, we are interested in explicit 
upper bounds for the gradient $|\nabla u(x)|$ at each point $x\in\overline{\Omega}$, in terms of the value of the solution $u(x)$ at the same point. Such estimates are usually referred to as differential Harnack inequalities (DHI), and provide much more precise information than the usual $C^1$-estimates in which $\sup|\nabla u|$ is bounded by $\sup u$. 
While many DHI are available for operators with regular coefficients, much less is known for the case when the coefficients are not smooth, as we explain below. 
Second, we obtain positive lower bounds for the gradient  on $\partial\Omega$ of a positive solution of the Dirichlet problem, or more generally for the quantity $u(x)/\mathrm{dist}(x,\partial \Omega)$ in $\overline{\Omega}$ for any positive supersolution of  \eqref{defdiv}. Such results fall under the general category of quantitative strong maximum principles; here most of our effort (though not all) will focus on the so-called Morel-Oswald estimate.

In the rest of the introduction we give an overview of our contributions to these two topics, starting with the latter, since it will be used for the former.

\subsection{Optimized Morel-Oswald estimate (quantitative Hopf-Oleinik lemma)} We start by  recalling a fundamental property of superharmonic functions --- that positivity entails a quantitative version of itself.  Specifically, if $u\ge0$ is $\ld$-superharmonic in a bounded $C^{1,\bar\alpha}$-domain $\Omega\subset \rn$, then for some $c_0=c_0(u,\ld,\Omega)>0$
\begin{equation}\label{hopf1}
u(x)\ge c_0\, d(x),\quad x\in\Omega 
\qquad (\mbox{unless }\; u\equiv0).
\end{equation}
In other words, if $u$ attains a zero minimum inside $\Omega$ then $u\equiv0$ and if $u$ attains a zero minimum at a boundary point, then the normal derivative of $u$ at this point does not vanish. The latter is the famous Zaremba-Hopf-Oleinik lemma, also called boundary point lemma or boundary point principle (BPP). The study of this ``bedrock" (to quote page 1 of \cite{PS}) result in the theory of elliptic PDE spans more than a century -- we refer to the  survey \cite{AN}, as well as to \cite{FG}, \cite{G1}, \cite{N1},  \cite{AN2}, \cite{Sa},   \cite{GSS}, for (both positive and negative) results on the validity of \eqref{hopf1} in the  case of  divergence-form equations considered here.

Note \eqref{hopf1} is only a qualitative estimate, and specifying the constant $c_0(u,\mathcal{L},\Omega)$ in \eqref{hopf1} is a fundamental question.
The best-known way of  quantifying this constant is expressing its dependence in $u$ through the positivity of $u$ itself (an integral norm of $u$). This  leads to estimates of weak Harnack type or ``growth lemmas", which are at  the heart of the regularity theory of elliptic PDE. In particular,  the classical De Giorgi-Moser weak Harnack inequality (\cite[Theorem 8.18]{GT}) is an interior version of \eqref{hopf1} and states that $u\ge c_0(\mathcal{L}, K,\Omega)\|u\|_{L^s(K)}$ in $K$, for a compact $K\subset\Omega$ and $s<n/(n-2)$.  In  \cite{SS2} we obtained a further specification of $c_0$ with respect to $\mathcal{L}$ and $\Omega=B_R$ in that interior inequality, and used it to obtain novel results on the V\'azquez strong maximum principle and the Landis conjecture.
Furthermore,  global (that is,  up to the boundary of $\Omega$) variants of the weak Harnack inequality which extend the BPP were recently obtained in the form $u/d\ge c_0(\mathcal{L}, \Omega)\|u/d\|_{L^s(\Omega)}$ in $\Omega$ for some $s>0$, where $d=d(x)=\mathrm{dist}(x,\partial\Omega)$ (\cite{Sir3}, \cite{GSS}). We note that below
we will present (Theorem~\ref{BHIoptim}) and use a version of the result in \cite{GSS} with a constant $c_0(\mathcal{L}, \Omega)$ whose dependence with respect to the norms of the coefficients of $\mathcal{L}$ and the size of $\Omega$ is made explicit. \smallskip

Another important and useful way of quantifying \eqref{hopf1} with respect to $u$ is expressing  $c_0$ through an integral norm of $-\ldu$ instead of $u$. This type of inequality for $\ld = \Delta$  gained a lot of popularity and was used  in various contexts after the work by Brezis and Cabr\'e~\cite{BrC} (as noted in \cite{BrC}, the estimate is due to an unpublished work by Morel and Oswald; see also \cite[Proposition A.4.2]{D1}). Specifically, the result in \cite{BrC} states that if $u\ge0$ in a smooth domain~$\Omega$, $-\Delta u \ge f \ge0$ in $\Omega$, then
\be\label{HopfLapl}
\frac{u(x)}{d(x)} \ge C(n, \Omega){\|f\|_{L^1_d(\Omega)}}\,, \quad x\in \Omega, \qquad \mbox{where }\; \|f\|_{L^1_d(\Omega)}:=\int_\Omega |f(y)|d(y)dy.
\ee
We refer to \cite{Zh}, \cite{Da}, \cite{EDR}, \cite{D1}, \cite{OP}, \cite{KK},   \cite{Sir1} for variants and extensions of this inequality to various operators and contexts. 
A lot more information on the Morel-Oswald inequality and its various applications can be found in  these works and the references therein. We note that the $L^1_d$-norm in \eqref{HopfLapl} cannot be improved (see Remark \ref{Hopf-optL1d}).

Our first main result (Theorems \ref{OptimizedHopf} and \ref{thm4gen}(i) below) establishes the Morel-Oswald inequality for the full general operator in \eqref{defdiv} including both divergence and drift first order terms, possibly with unbounded drifts and potentials. The most novel and original feature of the method we introduce below is that it yields a precise quantitative bound. Specifically, a question which has not been considered before (in spite of its obvious interest), even in simple cases, is a quantification of the constant in the Morel-Oswald inequality in terms of the size of the domain and the norms of the coefficients of the elliptic operator. We will provide an explicit lower bound for this constant which we will see to be  sharp with respect to these quantities.

Our proof of the Morel-Oswald inequality is different from those available to date. It uses a splitting of the domain into a controlled number of small balls with controlled radius such that a suitable rescaling of the operator in each ball satisfies the maximum principle in the rescaled ball, then using  ideas from \cite{BrC}
(where the case $\mathcal{L}=\Delta$ was treated),
as well as additional arguments based on  the global Harnack inequality mentioned above (which ensure the uniformity of the estimates with respect
to the norm of the nonprincipal coefficients and the domain) and a reduction to operators in simpler form, in order to obtain a variant of the theorem in each ball, and finally observing that the $L^1_d$-norm of $f$ in some  ball is comparable (with a constant depending on the right quantities) to its norm in $B_R$.

 For more comments on  possible generalizations of the hypotheses on $\ld$,
 see Remark \ref{remcoefopt}.

\medskip

In order to give a precise statement, we need to introduce some notations. Since these can look somewhat daunting at first reading for  general $\ld$ and $\Omega$, {\it in order to accommodate the reader, here} we will  give a theorem for the particular and frequently encountered operator
\begin{equation}\label{partoper}
 \ldo u:= \mathrm{div}(A(x)\nabla u) + b(x)\cdot\nabla u + c(x) u,
 \end{equation}
 with bounded  coefficients in a ball $B_R$, and $A\in C^\alpha(B_R)$, $f\in L^q(B_R)$, $q>n$. We set
 \begin{equation}\label{partoper_defM}
M_0 = [A]_{\alpha, B_R}^{1/\alpha}+ \|b\|_{L^\infty(B_R)} + \|c\|_{L^\infty(B_R)}^{1/2},
 \end{equation}
where  $[A]_{\alpha,B_R}$ is the standard H\"older bracket in $B_R$.
 We have the following estimate. 

\begin{thm} \label{OptimizedHopf}
If $u\in H^1(B_R)$ is a weak supersolution
of $-\mathcal{L}_0u\ge f$ and $f,u\ge 0$ in $B_R$,
then for some $c_0,C_0>0$ depending only on $n$, $\lambda$, $\Lambda$, $\alpha$, we have
\be\label{conclHopf}
\frac{u(x)}{d(x)} \ge  c_0{e^{-C_0M_0R}\,R^{-n}}\int_{B_R}fd ,\quad x\in B_R.
\ee
\end{thm}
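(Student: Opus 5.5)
Following the strategy outlined in the introduction, we reduce to a scale where the operator is a small perturbation of a constant-coefficient one. Then we prove a Brezis--Cabr\'e type estimate on small balls and propagate positivity through a chain of balls, paying a fixed factor per ball.

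\emph{Step 1: choice of scale.} Set $\rho=\min(R,\delta/M_0)$, with $\delta=\delta(n,\lambda,\Lambda,\alpha)$ small. The rescaling $v(y)=u(x_0+\rho y)$ on $B_1$ turns $\mathcal{L}_0$ into an operator whose coefficients satisfy
\[
[A]_\alpha\le\delta^\alpha,\qquad |b|\le\delta,\qquad |c|\le\delta^2 .
\]
For $\delta$ small, such an operator satisfies the maximum principle on $B_1$. Its Green function $G$ on $B_1$ (or on a ball touching $\partial B_R$) then obeys the two-sided bound of the Laplacian's Green function:
\[
G(x,y)\ge c\,\min\{1,\,d(x)d(y)|x-y|^{-n}\}.
\]
This follows from global $C^{1,\alpha}$ estimates together with the global Harnack inequality / boundary weak Harnack (Theorem~\ref{BHIoptim}), all with constants uniform in the normalized data. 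This yields the local Morel--Oswald estimate
\[
\frac{u}{d_B}\ge c\int_B f\,d_B \quad\text{on } B,\qquad B\text{ a ball of radius }\rho .
\]
The proof compares $u$ with the solution $w$ of $-\mathcal{L}_0w=f$ on $B$, $w=0$ on $\partial B$, and uses $w(x)=\int_B G f$. After undoing the scaling, the factors of $\rho$ work out to $\rho^{-n}$ times the $L^1_d$ integral. To handle $c$ without sign, one can first pass to $c\le0$ by writing $u=\varphi\tilde u$ with $\varphi$ a positive solution close to $1$ at this scale. This is the reduction to a simpler operator mentioned in the introduction.

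\emph{Step 2: covering.} Cover $B_R$ by $N\lesssim (R/\rho)^n$ balls of radius comparable to $\rho$. Interior balls have $d\approx d_B+\text{const}\cdot\rho$; near $\partial B_R$ we use balls internally tangent to $\partial B_R$, so that $d_B\le d$ and the ball distance is comparable to $d$ in the relevant region. By pigeonhole, some ball $B^*$ carries at least $N^{-1}\int_{B_R}fd$. Since $d\le R$ and $d_{B^*}$ is comparable to $d$ up to factors of $R/\rho$, Step~1 gives
\[
u\ge c\,\rho^{1-n}\,(\rho/R)^{k}\int_{B_R}fd
\]
at the center of $B^*$, for some fixed power $k$.

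\emph{Step 3: propagation.} Connect $B^*$ to any point $x$ by a chain of at most $C R/\rho$ overlapping balls. The Harnack inequality at scale $\rho$ costs a fixed factor per ball, hence a total factor $e^{-CR/\rho}$. In the last ball, near $\partial B_R$, the boundary Harnack inequality (Theorem~\ref{BHIoptim}) converts an interior lower bound into $u\ge c\,d$. With $\rho=\delta/M_0$ we have
\[
e^{-CR/\rho}=e^{-C_0M_0R},
\]
and every polynomial factor $(R/\rho)^m=(M_0R/\delta)^m$ can be absorbed into this exponential by enlarging $C_0$. What remains is $c_0e^{-C_0M_0R}R^{-n}\int fd$. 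When $\rho=R$ there is no chain and Step~1 alone gives the result with the $R^{-n}$ factor.

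\emph{Main obstacle.} The delicate point is Step~1. We need the Green function lower bound $G\gtrsim d(x)d(y)$, i.e.\ a Hopf lemma that is uniform for divergence-form operators with only H\"older continuous $A$ and a first-order drift. This requires uniform $C^{1,\alpha}$ and boundary Harnack bounds at unit scale. I would obtain it from Theorem~\ref{BHIoptim} applied to $G(\cdot,y)$ away from the pole, combined with the interior Harnack inequality near the pole.
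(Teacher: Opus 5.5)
Your architecture matches the paper's. You work at the scale $\rho\sim\min(R,1/M_0)$ where the rescaled coefficients are small, prove a local Morel--Oswald bound there, pigeonhole over $\sim(R/\rho)^n$ pieces, and propagate with an $e^{CR/\rho}$ loss; the paper packages this last step as the global weak Harnack inequality \eqref{sharpWBHI}.

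The gap is in Step~1, exactly where you flag it. Write $\tilde{\mathcal{L}}=\mathrm{div}(\tilde A\nabla\cdot)+\tilde b\cdot\nabla+\tilde c$ for your rescaled operator. To get $\int_B f\,d_B$ you need $G(x,y)\gtrsim d(x)\,d(y)$. The factor $d(y)$ in the \emph{pole} variable is a Hopf property of $y\mapsto G(x,y)$, which solves the adjoint equation $\tilde{\mathcal{L}}^*v=\mathrm{div}(\tilde A^T\nabla v-\tilde b v)+\tilde c v=0$ away from $x$. The adjoint carries a divergence-form lower-order coefficient $-\tilde b$ that is merely bounded, not H\"older, so it falls outside \eqref{hyp2}. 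Neither $C^{1,\alpha}$ estimates nor Theorem~\ref{BHIoptim} are available for it.

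Your route (Theorem~\ref{BHIoptim} for $G(\cdot,y)$ off the pole, interior Harnack near it) yields only the factor $d(x)$. Even that holds only for poles in a fixed interior region, since the punctured domain degenerates as $y\to\partial B$. The outcome is $u/d\gtrsim\int_{B'}f$ for an interior sub-ball $B'$, not $\int_B f\,d_B$. Your conjugation $u=\varphi\tilde u$ removes $c$ but leaves the bounded drift $A\nabla\varphi+\varphi b$, whose adjoint has the same defect.

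The missing idea (Step~3 of Lemma~\ref{LemOptimizedHopf2}) is to conjugate the \emph{adjoint} instead:
\begin{itemize}
\item Solve $\tilde{\mathcal{L}}^*\psi=-\mathrm{div}\,\tilde b+\tilde c$ in $H^1_0$ and set $\phi=1-\psi$. By the generalized maximum principle and Morrey's $W^{1,q}$ estimate (the data are small), $\tilde{\mathcal{L}}^*\phi=0$, $1/2\le\phi\le 3/2$, and $[\phi]_\alpha$ is small.
\item Then $\tilde{\mathcal{L}}^*(\phi w_1)=\mathrm{div}(\phi\tilde A^T\nabla w_1)+(\tilde A^T\nabla\phi-\phi\tilde b)\cdot\nabla w_1$ has no divergence-form lower-order term.
\item The Hopf bound for $\mathrm{div}(A\nabla\,\cdot)$ (eigenfunction duality plus \eqref{sharpWBHI}) transfers to this operator by a $C^1$-perturbation argument.
\end{itemize}
This gives $w\ge c\,d_B$ for the solution of $-\tilde{\mathcal{L}}^*w=\chi_{B_\rho}$ with $B_\rho\subset B$ a fixed interior ball. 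In other words, only the averaged bound $\int_{B_\rho}G(x,y)\,dx\gtrsim d_B(y)$ is needed, not your pointwise Green estimate. Duality then gives $\inf_{B_\rho}u\ge c\int_B f w\ge c\int_B f\,d_B$, and a barrier on $B\setminus B_\rho$ supplies the factor $d_B$.

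A second, easily repaired flaw is your covering near $\partial B_R$. Finitely many closed balls of radius $\rho<R$ inside $\overline{B_R}$ meet $\partial B_R$ in only finitely many points, so they miss an open boundary layer. For $f$ supported there, every local integral in your pigeonhole vanishes. You need boundary-adapted pieces as in Lemma~\ref{LemOptimizedHopf3}: $C^{1,\bar\alpha}$ domains $\tilde\omega$ centred at points of $\partial B_R$, with $B^+_{3/2}\subset\tilde\omega\subset B^+_2$ in rescaled variables. These satisfy $\mathrm{dist}(\cdot,\partial\tilde\omega)\ge\min\{1/2,\rho^{-1}d\}$ on $B_1^+$ (cf.~\eqref{compdistomegai}), and the local estimate must be proved uniformly on them.
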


\begin{rem} Extending the quantities $M_0$ and $R$ to the case of a general $\ld$ as in \eqref{defdiv}-\eqref{hyp2} and a general $C^{1, \alpha}$-domain $\Omega$ requires some  work which (in spite of being important for the general theory) we have postponed to the next section, for the sake of simplicity and shortness of this introduction. The theorems we state in the introduction are both new and sufficient to grasp the essence of the more general results in Section \ref{sec-main} below.

We note notwithstanding that obtaining a precise constant in the Morel-Oswald inequality (and the following differential Harnack inequality) in the presence of unbounded coefficients requires additional new ideas; in particular, use of locally uniform Lebesgue spaces with carefully adjusted radius -- see Section \ref{sec-main}. \end{rem}

\begin{rem} We will not need to assume the solvability of the Dirichlet problem for $\ld$ (or the maximum principle), and there is no assumption for $u$ on the boundary.\end{rem}

The following proposition shows that the constant in \eqref{conclHopf} cannot be improved.

\begin{prop} \label{Prop-Optim-Hopf0}
For all $R, M_0>0$, there exist
$c\in C^\infty(\overline B_R)$  with $c\ge 0$ (resp.,~$b\in C^\infty(\overline B_R)$)
and $f\in C^\infty(\overline B_R)$ with $f>0$,
such that the positive classical solution  of
\[\label{optimHopf0}
- \ld_0 u
=f\;\mbox{ in } B_R,\qquad
u=0 \;\mbox{ on } \partial B_R,
\]
with  $\ld_0=\Delta-c$ (resp.,~$\ld_0=\Delta-b\cdot\nabla$)
satisfies, for  $M_0= \|c\|_{L^\infty(B_R)}^{1/2}$ (resp. $M_0= \|b\|_{L^\infty(B_R)}$) and some $C_0=C_0(n)>0$
\be \label{optimHopf2}
\ds\inf_{B_R} \frac{u}{d} \le \sup_{\partial B_R} \left|\frac{\partial u}{\partial \nu}\right| \le C_0\,e^{-C_0M_0R}\, R^{-n}\int_{B_R}fd.
\ee
\end{prop}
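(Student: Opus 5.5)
We will construct explicit radial examples. By scaling $x\mapsto Rx$, with $c\mapsto R^2c$, $b\mapsto Rb$ and $f\mapsto R^2 f$, both sides of \eqref{optimHopf2} transform in the same way and $M_0R$ is invariant. So it suffices to take $R=1$ and $M:=M_0$ large; for $M$ bounded the estimate follows from the standard comparison $|\partial_\nu u|\le C\int fd$, or from the examples below. The first inequality in \eqref{optimHopf2} is trivial: $u/d\to|\partial_\nu u|$ at the boundary, so $\inf u/d\le\sup_{\partial B_1}|\partial_\nu u|$. The whole task is therefore to make the boundary flux exponentially small compared with $\int fd$.

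\textbf{Potential case.} Take $c\equiv M^2$, which is constant, so $\|c\|_\infty^{1/2}=M$. Take $f$ smooth, positive, and concentrated near the origin, for instance $f=\delta^{-n}\phi(x/\delta)+\eta$ with $\delta=1/2$ fixed and $\eta$ a tiny positive constant added only to ensure $f>0$. Then $\int fd\approx c_n>0$. Compare $u$ with the radial solution: the Green function of $-\Delta+M^2$ in $B_1$ is dominated by the whole-space one, $G_M(x,y)\le C|x-y|^{2-n}e^{-M|x-y|/2}$ for $M\ge1$, by a Bessel-function bound. The normal derivative at $\partial B_1$ of the Green function with pole at $y\in B_{1/2}$ is at most $Ce^{-M/4}$. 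To see this, use a barrier: on the annulus $1/2<|x|<1$, $u$ is bounded above by $\sup_{\partial B_{1/2}}u\cdot w$, where $w$ is radial with $-\Delta w+M^2w=0$, $w=1$ on $|x|=1/2$ and $w=0$ on $|x|=1$, and $|w'(1)|\le CMe^{-M/2}$. Meanwhile $\sup u\le C\|f\|_\infty$, and the contribution of $\eta$ is $O(\eta)$, which can be taken below $e^{-M}$. This gives $|\partial_\nu u|\le C M e^{-M/2}\le C_0e^{-C_0 M}\int fd$ after adjusting $C_0$.

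\textbf{Drift case.} Take $b=Mx/|x|$, smoothed near the origin, for instance $b(x)=M\,x\,\chi(|x|)$ with $|b|\le M$ and $b=Mx/|x|$ for $|x|\ge1/4$. Then $\Delta u-b\cdot\nabla u=u''+\big(\tfrac{n-1}{r}-M\big)u'$ in the radial variable, an outward drift in the operator $-\Delta+b\cdot\nabla$ that pushes mass toward the origin. Take $f$ as before, supported essentially in $B_{1/4}$, plus a tiny $\eta$. On the annulus $1/4<r<1$, $u$ solves the homogeneous ODE $(r^{n-1}e^{-Mr}u')'=0$ (up to $\eta$), so $u'(r)=-K r^{1-n}e^{Mr}$. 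Integrating from $r$ to $1$ with $u(1)=0$ gives $u(1/4)=K\int_{1/4}^1 s^{1-n}e^{Ms}\,ds\ge c K e^{M}/M$. Since $u(1/4)\le\sup u\le C$, we get $|u'(1)|=Ke^{M}\le CM$. That does not yield decay, so the sign of the drift should be reversed: choose $b=-Mx/|x|$. Then $u'=-Kr^{1-n}e^{-Mr}$ and $u(1/4)=K\int_{1/4}^1 s^{1-n}e^{-Ms}\,ds\ge cKe^{-M/2}$ for large $M$. Together with $|u'(1)|=Ke^{-M}$ this gives $|u'(1)|\le Ce^{-M/2}u(1/4)$.

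\textbf{Main obstacle.} In the drift case one still needs $u(1/4)\le C$ uniformly in $M$. Because the drift then pushes mass outward, $u$ may have a nontrivial size, so we will bound $u$ via the maximum principle with the supersolution $v=C(1-|x|^2)$. Here $-\Delta v+b\cdot\nabla v=2nC-2CM|x|\chi\cdot(-1)$, which is $\ge 2nC$ since $b\cdot\nabla v=2CM|x|\chi\ge0$. Hence $v\ge u$ provided $2nC\ge\|f\|_\infty$. This bounds $u$, and we take $\eta\le e^{-M}$ to control its contribution. This closes both cases and finishes the proof, with $R^{-n}$ restored by scaling.
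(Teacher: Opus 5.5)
Your proof is correct, but it takes a different route from the paper's.

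\textbf{The paper's route.} It uses one explicit ansatz, $u=e^{\lambda\phi}-1+K\phi$ with $\phi=\frac12(1-|x|^2)$, together with $c=\lambda^2|x|^2$ or $b=-\lambda x$. Then $f\ge\lambda n e^{\lambda\phi}$ is exponentially large at the center, so $\int_{B_1}fd\ge C\lambda e^{3\lambda/8}$. Meanwhile $|\partial_\nu u|=\lambda+K\le\lambda(1+\lambda)$ is only polynomial in $\lambda$. Everything is in closed form, with no maximum principle or ODE analysis, and one computation covers both cases. The paper then adds a continuity argument (Proposition~\ref{lemr0ul}) to reach every value of the uniformly local quantity $M$; the statement as written does not need this.

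\textbf{Your route.} You do the opposite. You keep $f$ a fixed bounded bump with $\int fd\sim 1$ and show the boundary flux is exponentially small. For $c\equiv M^2$ you use a barrier on the annulus. For $b=-Mx/|x|$ you integrate $(r^{n-1}e^{Mr}u')'=0$ explicitly and use the $M$-independent supersolution $C(1-|x|^2)$. This makes the mechanism visible: interior mass is exponentially screened from the boundary. It also gives $\|c\|_\infty^{1/2}=M_0$ and $\|b\|_\infty=M_0$ exactly. The cost is a few comparison arguments.

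\textbf{Points to tidy.}
\begin{itemize}
\item The bound $|w'(1)|\le CMe^{-M/2}$ needs a justification. Compare $w$ with $W(r)=\sinh(M(1-r))/\sinh(M/2)$. It is a supersolution of $-\Delta+M^2$ on the annulus, since $-\Delta W+M^2W=-\frac{n-1}{r}W'>0$, so $|w'(1)|\le M/\sinh(M/2)$.
\item You should state that the bump is radial. Your drift argument needs $u$ to be radial in order to reduce to the ODE.
\item The ``standard comparison'' $|\partial_\nu u|\le C\int fd$ is false for general $f$, because the Poisson kernel is not bounded by $Cd(y)$. For your particular $f$, however, $u\le C(1-|x|^2)$ gives $|\partial_\nu u|\le C$, and that is all you need for bounded $M$.
\item In your discarded first attempt with $b=Mx/|x|$, the claim $\sup u\le C$ is wrong: with that sign the solution is exponentially large in $M$. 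This plays no role in the final argument.
\end{itemize}
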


Next, we present our results on differential Harnack inequalities, in which we will also use the  above sharp Morel-Oswald estimate.

\subsection{Optimized differential Harnack inequalities (logarithmic gradient estimates)}
Here we study bounds for $|\nabla \log(u)|$, for a positive solution of \eqref{defdiv}. Such estimates are often referred to as differential Harnack inequalities (DHI), since in many cases they can be integrated to obtain the usual Harnack inequality. Fundamental DHI for
 harmonic functions and for the heat equation  (more generally on manifolds) are due to  Cheng-Yau \cite{CY}, and to
Li-Yau \cite{LY} and Hamilton \cite{H}, respectively.
The Li-Yau-Hamilton estimates have generated huge number of extensions and have been successfully adapted
and generalized to various nonlinear geometric heat flows (see for instance the book \cite{RM} and the survey \cite{Ni}).
For the elliptic case, there is a large literature on extensions of these estimates and the related Harnack inequalities to semi-linear equations of Lane-Emden or logarithmic Lane-Emden type (see for instance \cite{LiJ}, \cite{MZ}, \cite{Lu}, \cite{GG} and the references to and from these works). Another classical line of pointwise gradient bounds (Modica-type estimates) for quasilinear  equations is related to use of so-called $P$-functions, see \cite{Sp}, \cite{Mod}, \cite{CGS}, \cite{FV}, \cite{RSW}, among others.

An essential feature in  Cheng-Yau and Li-Yau-Hamilton
type estimates, which leads to their applications, is that the constant in the inequality is made explicit with  respect to the size of the domain and the ambient space. Note that a qualitative version only (i.e.~without specifying the constant) of the DHI for linear equations is an immediate consequence of standard regularity estimates. For instance, if  
$\ld u=0$
and $u>0$ in $B_{3R}$,  combining the usual interior gradient estimate and the Harnack inequality gives 
\begin{equation} \label{qualdhi}|\nabla u(x)|\le\sup_{B_R}|\nabla u|\le C_1\sup_{B_{2R}} u \le C_1C_2\inf_{B_{2R}} u \le C_1C_2 u(x),\quad x\in B_R.
\end{equation}
 However, specifying the optimal $C_1$ and $C_2$ in this inequality shows (as we will recall below) that $C_2$ is in general exponential in $R$ and the coefficients of $\ld$. On the other hand,   Cheng-Yau and Li-Yau-Hamilton estimates (when applicable to $\ld u=0$) show that the  dependence of $C$ in $|\nabla u(x)|\le C u(x)$ with respect to $R$ is at most linear. In other words, some strong cancellation occurs in the fraction $|\nabla u(x)|/u(x)$, which is lost in an argument such as \eqref{qualdhi}, so uncovering it requires additional work and makes these DHI highly nontrivial.

Pointwise gradient estimates are most frequently  proved by variations of the Bernstein method, which involves differentiating the equation and requires smoothness of its coefficients. For that reason the above quoted works require  such smoothness assumptions. On the other hand, DHI for less regular equations were tackled only recently, stemming from the important work by Kenig, Silvestre and Wang  \cite{KSW}. They were motivated by the Landis conjecture which involves only a bounded potential,  thus considered $-\Delta u + c(x)u = 0$ for $c(x)$ bounded measurable  in a two-dimensional ball $B_2\subset \mathbb{R}^2$, and proved the interior estimate $\|\nabla \log(u)\|_{L^\infty(B_1)}\le C_0 \|c\|_{L^\infty(B_2)}^{1/2}$ ($=C_0M_0$ in the above notation).
Extensions of this estimate to more general $2$-dimensional operators as in \eqref{defdiv} are given in \cite{DW}. The same  interior estimate for the $n$-dimensional equation $\ldu=0$ is obtained in \cite{LeB}, for an operator which is also in non-divergence form, and under the restrictions $A=Id$, $b_1=0$, $b_2, c\in L^\infty$. 

We next present our main results on DHI, whose interest is fourfold. First, we remove the restrictions on the dimension and/or the coefficients from the previous works, and prove the estimate for the general operator in \eqref{defdiv}, requiring from the coefficients only the regularity necessary for the gradient to be defined pointwise. Second, we provide a global, up-to-the-boundary differential Harnack bound. Third,  the constant in the estimate  exhibits the expected optimal behaviour -- linear in $M_0$ (actually in $M_0d$) and independent of $R$. And finally, we obtain a DHI for  {\it non-homogeneous} equations, which appears to be a completely novel result. The latter gives rise to some interesting phenomena related to the dimension and the right-hand side, which we discuss below.

As in the previous section,  here we restrict to \eqref{partoper}-\eqref{partoper_defM}, for readability. The constant $C_0$ depends on $n$, $\lambda$, $\Lambda$, $\alpha$.

\begin{thm}
\label{lgeu}
(i) For any weak solution $u$ of $-\mathcal{L}_0u=0$, $u>0$ in $B_R$,
\be\label{concllgeu}
\frac{d(x)|\nabla u(x)|}{u(x)}\le C_0\max\{1, M_0 d(x)\},\quad x\in B_R.
\ee

(ii) Assume $ q>\strut n=1$.   If $f\in L^q(-R,R)$ with $f\ge 0$, 
then for any weak solution of $-\mathcal{L}_0u=f$,  $u>0$ in $\Omega=(-R,R)$, inequality \eqref{concllgeu} is valid. \smallskip

(iii)  Assume  $q>\strut n\ge2$. If $f\in L^q(B_R)$ with $f\ge 0$, $f\not\equiv 0$, then
for any weak solution $u$ of $-\mathcal{L}_0u=f$, $u>0$ in $B_R$, we have the
estimate
\be\label{concllgeu2}
\frac{d(x)|\nabla u(x)|}{u(x)}\le C_0\Bigl(\max\{1, M_0d(x)\} + R^n e^{ C_0MR}
\frac{\|f\|_{L^q(B_R)}}{\|f\|_{L^1_d(B_R)}}\, d(x)^{1-\frac{n}{q}}\Bigr),\quad x\in B_R.
\ee
\end{thm}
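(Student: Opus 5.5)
The plan is to argue by rescaling to a ball of radius comparable to $\min\{d(x),1/M_0\}$, where the rescaled operator has small lower-order coefficients. Fix $x\in B_R$ and set $r=\frac12\min\{d(x),1/M_0\}$, so that $B_{2r}(x)\subset B_R$.

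\textbf{Rescaling.} Define $v(y)=u(x+ry)$ on $B_2$. Then $v$ solves
\[
-\mathrm{div}(\tilde A\nabla v)-\tilde b\cdot\nabla v-\tilde c v=\tilde f ,
\]
with $\tilde A(y)=A(x+ry)$, $\tilde b=rb$, $\tilde c=r^2c$ and $\tilde f=r^2f(x+r\cdot)$. By the choice $r\le 1/(2M_0)$ we get $[\tilde A]_\alpha=r^\alpha[A]_\alpha\le M_0^\alpha r^\alpha\le 1$, $\|\tilde b\|_\infty\le 1$ and $\|\tilde c\|_\infty\le 1$. All coefficients are therefore controlled by constants depending only on $n,\lambda,\Lambda,\alpha$.

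\textbf{Interior estimates for $v$.} The standard $C^{1,\beta}$ estimate (Gilbarg--Trudinger Theorem 8.32) gives
\[
|\nabla v(0)|\le C\bigl(\sup_{B_1}v+\|\tilde f\|_{L^q(B_1)}\bigr).
\]
The De Giorgi--Moser Harnack inequality (Theorems 8.17 and 8.18), whose constant now depends only on $n,\lambda,\Lambda$ because the coefficients are bounded by $1$, gives
\[
\sup_{B_1}v\le C\bigl(v(0)+\|\tilde f\|_{L^{q/2}(B_2)}\bigr).
\]
For (i) we have $f=0$, so $|\nabla v(0)|\le Cv(0)$. Since $\nabla v(0)=r\nabla u(x)$, this gives $|\nabla u(x)|\le C u(x)/r$. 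Multiplying by $d(x)$ and using $d/r=2\max\{1,M_0d\}$ yields \eqref{concllgeu}.

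\textbf{Part (iii).} When $f\ne0$ we need to absorb the $f$ term. Scaling gives $\|\tilde f\|_{L^q(B_2)}\le r^{2-n/q}\|f\|_{L^q(B_R)}$, so
\[
\frac{d(x)|\nabla u(x)|}{u(x)}\le C\max\{1,M_0d\}+C\,\frac{d\,r^{1-n/q}\|f\|_{L^q}}{u(x)} .
\]
Here we use Theorem \ref{OptimizedHopf}, which gives
\[
u(x)\ge c_0e^{-C_0M_0R}R^{-n}\|f\|_{L^1_d}\,d(x).
\]
Substituting this bound, and using $r\le d$ together with $1-n/q>0$, bounds the second term by
\[
C R^ne^{C_0M_0R}\frac{\|f\|_{L^q}}{\|f\|_{L^1_d}}\,d^{1-n/q},
\]
which is \eqref{concllgeu2}.

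\textbf{Part (ii) and the main obstacle.} Part (ii), with $n=1$, is the real difficulty, because no $f$-term may appear in the bound. The point is that $f\ge0$ makes $u$ a supersolution, which should help. My first attempt is to use the one-dimensional structure. Write the flux $w=Au'+\dots$; it satisfies $w'=-f-bu'-cu$, so with $f\ge0$ it is monotone up to lower-order terms. On the interval $I=(x-r,x+r)$, $u$ is positive and a supersolution, so by the weak Harnack inequality $\inf_I u\gtrsim \frac1r\int_I u$.

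I would then split into two cases according to the sign of the flux at $x$. If $u'(x)>0$, the monotonicity of $w$ gives $w\gtrsim w(x)$ on $(x-r,x)$ up to errors of order $\|b\|r|w|+\|c\|r^2u$. Integrating from $x-r$ to $x$ then gives $u(x)\ge u(x)-u(x-r)\gtrsim r|u'(x)|$, modulo these errors. Since $rM_0\le 1/2$, the errors can be absorbed. The case $u'(x)<0$ is symmetric, integrating forward from $x$ to $x+r$. In both cases $r|u'(x)|\lesssim u(x)$, which gives (ii) with no $f$-dependence. The delicate part is controlling the lower-order errors through a Gronwall argument on the interval of length $r$.

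In dimension $n\ge2$ this argument fails, since a point mass of $f$ produces Green-function singularities. That is why (iii) needs the $f$-term.
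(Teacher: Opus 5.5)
Your proposal is correct. Parts (i) and (iii) are essentially the paper's short proof: rescale at scale $\min\{d(x),r_0\}$, apply the interior $C^1$ estimate and the Harnack inequality, then use the Morel--Oswald bound for $u(x)/d(x)$. Your scale $\frac12\min\{d,1/M_0\}$ is equivalent on $B_R$, because $r_0^{-1}=r_\Omega^{-1}+M$ and $d\le R\le Cr_\Omega$.

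For (ii) you take a genuinely different route. The paper argues by contradiction:
\begin{itemize}
\item it passes to $v=\log u$ and uses the doubling lemma to blow up at points where $d|v'|/\max\{1,Md\}$ is large;
\item it uses $f\ge0$ to make the rescaled right-hand sides nonnegative, which gives an $L^1_{loc}$ bound on them and $W^{1,m}_{loc}$ compactness;
\item since $C^1$ convergence is lost, it needs a separate argument that the limit is nonconstant;
\item it concludes because the limit satisfies $(e^w)''\le 0$ on $\mathbb{R}$, so $e^w$ is positive and concave on the whole line, hence constant.
\end{itemize}
Your argument is a direct, quantitative version of that concavity at scale $r$. The condition $f\ge0$ makes the flux almost nonincreasing on $(x-r,x+r)$, and a positive, almost concave function there satisfies $r|u'(x)|\lesssim u(x)$. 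Your route is more elementary, gives explicit constants and avoids the doubling lemma and compactness. The paper's blow-up scheme is heavier in one dimension, but it treats (i)--(iii) uniformly and needs only interior Schauder estimates.

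One point should be tightened. The step you call delicate should not be handled with an additive error $\|b\|_\infty r\sup|w|$, since $\sup|w|$ on the interval is not a priori controlled. Use the exact integrating factor instead. Set $B(s)=\int_x^s b/a$ and $W=e^{B}au'$. Then $W'=-e^{B}(f+cu)$, and $|B|\le \|b\|_\infty r/\lambda\le 1/(2\lambda)$ on $(x-r,x+r)$. Hence, for $s\in(x-r,x)$,
\[
W(s)\ge W(x)-e^{1/(2\lambda)}\|c\|_\infty\int_{x-r}^{x}u\ge W(x)-\frac{C\,u(x)}{r}.
\]
The last inequality uses two facts: the weak Harnack inequality for the supersolution $u$ on $(x-2r,x+2r)$ gives $\int_{x-r}^{x+r}u\le Cru(x)$, and $\|c\|_\infty r^2\le 1/4$. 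Now there are two cases.
\begin{itemize}
\item If $W(x)\le 2Cu(x)/r$, the bound is immediate.
\item Otherwise $W\ge W(x)/2$ on $(x-r,x)$, and integrating gives $u(x)>u(x)-u(x-r)\ge c\,rW(x)$.
\end{itemize}
In both cases $ru'(x)\le Cu(x)$. The case $u'(x)<0$ is symmetric, working on $(x,x+r)$.
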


\begin{rem}\label{remloggrad1d}
Thus, quite counter-intuitively, for $n=1$ the maximum of $d|u^\prime|/u$ is bounded above independently of $f$, for any positive solution of $-\mathcal{L}u=f$.
On the other hand, this is not so and \eqref{concllgeu} fails for  $n\ge2$ and $f\gneqq0$, as a consequence of Proposition~\ref{PropLoggradOptimality} below.
\end{rem}

\begin{rem}\label{remloggrad1d2}
It is easy to see that $|\nabla u|/u$ can be unbounded
without the positivity assumption $f\ge 0$, even for $n=1$,  cf.~\eqref{counterfpositive} below. The necessity to consider nonnegative right-hand sides could explain why it has not been observed before that logarithmic gradient bounds may hold for non-homogeneous equations.
\end{rem}

The proof of Theorem \ref{lgeu} differs considerably from those used in earlier works. It uses a contradiction argument, and is based on a logarithmic change of variable and  the doubling lemma from \cite{PQS}. That argument has some delicate points but is quite versatile since the only additional ingredient it requires are classical interior Schauder estimates, so it combines seamlessly with the Morel-Oswald estimate to yield part (iii), and with some additional functional analytic facts in one dimension, to yield part (ii) of the theorem.  We will also observe that parts (i) and (iii) of Theorem \ref{lgeu} admit a shorter proof at the cost of  using  the Harnack inequality for general operators (and some  careful rescalings).

We will show in Proposition~\ref{Optimality_Harnack} below  that in the homogeneous case $f=0$ the estimate~\eqref{concllgeu}
cannot be improved with respect to $M_0$. As for the non-homogeneous case -- for which the result above appears to be completely novel, even qualitatively -- we leave some open problems  with respect to the optimality of the second term in the right-hand side of~\eqref{concllgeu2}. We  discuss these in the next section.  

\medskip

The rest of the paper is organized as follows. The next section contains our main results for general operators and domains. In Section \ref{sec-prelim} we state several useful properties of the quantities $r_0,M$ and the uniformly local norms. Section \ref{proofHopf} contains the proof of the Morel-Oswald estimate, and Section \ref{proofloggrad} is devoted to the differential Harnack bound. The results on the sharpness of the estimates are proved in Sections \ref{sec-proofoptim}-\ref{sec-proofoptim2}, and at the end an appendix gives a few auxiliary results and proofs.

\needspace{4\baselineskip}
\section{Main results for general operators and domains }\label{sec-main}

{\cbl
\subsection{Notations} We always assume that}
 $$\bar\alpha\in (0,1], \quad n<q\le \infty, \quad\mbox{ and }\quad  0<\alpha< \min\{1-n/q,\bar\alpha\}.$$
 We denote with $c,C>0$ (possibly with indices) generic positive constants which may depend only on $n,\lambda,\Lambda, q,\alpha,\bar\alpha$, and may change from line to line.

 The domain $\Omega$ is  bounded, with $C^{1,\bar\alpha}$ boundary. The distance to the boundary of $\Omega$ is denoted by $d=d(x)=\mathrm{dist}(x,\partial\Omega)$. We set
$$D_0:={\rm diam}(\Omega),\qquad D:=\mbox{ geodesic diameter of }\Omega \;= \sup_{x,y\in \Omega} \inf_{\sigma\in\Sigma(x,y)} l(\sigma),$$
where $\Sigma(x,y)$ is the set of all paths in $\Omega$ connecting $x$ and $y$, and $l(\sigma)$ is the length of $\sigma$.
We next define a constant $r_\Omega$ which quantifies the well-known facts that each point of the boundary of a $C^{1,\bar\alpha}$-domain: (i) has a uniform neighborhood in which the domain can be ``flattened", and (ii) can be touched by a $C^{1,\bar\alpha}$-paraboloid with fixed size and opening.

Let $\rho_\Omega>0$ be the supremum of all
$r\le D_0$
such that, for each $\xi\in \partial\Omega$, there is a $C^{1,\bar\alpha}$-diffeomorphism $\Phi$ between the domain $(\Omega\cap B_{r}(\xi) - \xi)/r$ (resp. the surface $(\partial \Omega\cap B_{r}(\xi) - \xi)/r$) and $B_1^+=B_1(0)\cap\{x_n>0\}$ (resp. $B_1^0 = B_1(0)\cap\{x_n=0\}$) such that $D\Phi(0)=I$, $ |D\Phi| \le 2$, $ |D\Phi^{-1}| \le 2$, $[D\Phi]_\alpha\le1$. The existence of such $r>0$ is well known, see for instance~\cite{Liebe}.
Also, there exist $\bar\rho_\Omega, k_\Omega>0$ with the following property:
for each $\xi\in \partial\Omega$,
there exists an orthonormal coordinate system $y=(y',y_n)\in\R^{n-1}\times\R$ with origin $\xi$ and a $C^{1,\bar\alpha}$ function
$\varphi$ on $B'_0=\{|y'|<\bar\rho_\Omega\}$, such that,
setting $\Sigma_0=B'_0\times(-\bar\rho_\Omega,\bar\rho_\Omega)$, we have
$\Sigma_0\cap\Omega=\Sigma_0\cap \{y_n>\varphi(y')\}$,
$\Sigma_0\cap\partial\Omega=\Sigma_0\cap \{y_n=\varphi(y')\}$,
$\varphi(0)=\xi$, $D\varphi(0)=0$ and $[D\varphi]_{\bar\alpha}\le k_\Omega$; hence
\be\label{def_parab}
\bigl\{y=(y';y_n)\in \R^{n-1}\times\R:\ |y'|< \bar\rho_\Omega\ \hbox{and}\ k_\Omega |y'|^{1+\bar\alpha}<y_n< \bar\rho_\Omega\bigr\}\subset \Omega.\ee
For a very general result on these geometrical considerations see \cite{AM}, in particular Corollary~3.14 in that paper.
We set
\be\label{def_romega}
r_\Omega=\min\bigl(\rho_\Omega, \bar\rho_\Omega/4,(120 k_\Omega)^{-1/\bar\alpha})\bigr).
\ee
Note it is easy to see that if $\Omega=B_R$ then we can take $\bar\alpha=1$, and
$$
\mbox{if }\; \Omega=B_R\quad \mbox{then}\quad r_{\Omega}= c(n)R \; \mbox{ and }\; D=2R.
$$

We  denote by $\|\cdot\|_{L^q(\Omega)}$ the usual Lebesgue norm,
by $\|\cdot\|_{L^q_d(\Omega)}$ the Lebesgue norm weighted by $d$,
and by $[\cdot]_{\alpha,\Omega}$ the usual H\"older seminorm (bracket) on~$\Omega$.
We will use the following  uniformly local Lebesgue norm: for  $q\in[1,\infty]$, $r>0$,
\begin{equation}\label{deful}
\|h\|_{q,r,\Omega}:=\sup_{x\in \Omega} \|h\|_{L^q(\Omega\cap B_{{r}}(x))},
\quad h\in L^q(\Omega),
\end{equation}
(of course this is trivial for $q=\infty$,  $\|h\|_{\infty,r,\Omega}=\|h\|_{L^{\infty}(\Omega)}$ for any $r>0$), and the uniformly local H\"older bracket
\begin{equation}\label{defHbracket}
[\psi]_{\alpha, r,\Omega} := \sup_{x\in\overline{\Omega}}\ \sup_{y,z\in B_{r}(x)\cap \Omega} |y-z|^{-\alpha}|\psi(y)-\psi(z)|,
\quad \psi\in C(\overline\Omega),\quad  \alpha\in(0,1).
\end{equation}
As an advantage  over 
 the usual $L^q$-norms for finite $q$, uniformly local $L^q$-norms essentially measure
the local integrability features of a function and do not deteriorate in large domains for functions which do not decay for large $|x|$
(one may think of a periodic function).
These norms have been used in the study of global existence
of solutions of Navier-Stokes or parabolic equations, e.g.~in the classical papers \cite{Ka,GV} (for $r=1$)
and more recently in \cite{HOS,IsSa,MaTe} where the possibility to vary $r$ was also exploited.

Given an operator $\mathcal{L}$ as in \eqref{defdiv}
satisfying \eqref{hyp1}, \eqref{hyp2}, and recalling definitions \eqref{deful}, {\eqref{defHbracket},
we define the number $r_0 \,=r_0(\ld,\Omega) \in (0,r_\Omega]$ by
\begin{equation}\label{defr0}
r_0:= \sup\Bigl\{r>0:\: r\bigl(r^{-1}_\Omega+ [A]^{\frac{1}{\alpha}}_{\alpha, r, \Omega} + \|b_1\|_{L^\infty(\Omega)} + [b_1]^{\frac{1}{\alpha+1}}_{\alpha, r, \Omega}+\|b_2\|^{\beta_q}_{q,r,\Omega} + \|c\|^{\gamma_q}_{q, r, \Omega}\bigr)\le 1\Bigr\}.
\end{equation}
In other words, $r_0$ is the point where the increasing in $r$ function in the parentheses in \eqref{defr0} meets the decreasing function $1/r$. See also Proposition \ref{basicr0} and its proof below.

Our estimates depend on the quantity
\begin{equation}\label{defM}
M=M(\ld,\Omega) =
[A]_{\alpha,r_0,\Omega}^{\frac{1}{\alpha}} + \bigl\|b_1\bigr\|_{L^\infty(\Omega)} + [b_1]_{\alpha,r_0,\Omega}^{\frac{1}{1+\alpha}}+ \bigl\|b_2\bigr\|^{\beta_q}_{q,r_0,\Omega}+\|c\|^{\gamma_q}_{q,r_0,\Omega},
\end{equation}
$$\mathrm{where}\quad\beta_q=\frac{1}{1-{n}/{q}},\quad \gamma_q=\frac{1}{2-{n}/{q}} \qquad \left(\beta_\infty=1, \quad \gamma_\infty = {1}/{2}\right).$$

Since this definition may seem somewhat convoluted at first sight, we immediately note that if one does not aim at an entirely optimal estimate, it is easy to see (Remark~\ref{r01M}) that
 an upper bound for $M$ is given by
\begin{equation}\label{defMmajor}
M \le
1+[A]_{\alpha,1,\Omega}^{\frac{1}{\alpha}}+ [b_1]_{\alpha,1,\Omega}^{\frac{1}{1+\alpha}} + \bigl\|b_1\bigr\|_{L^\infty(\Omega)} + \bigl\|b_2\bigr\|^{\beta_q}_{q,1,\Omega}+\|c\|^{\gamma_q}_{q,1,\Omega},
\end{equation}
i.e., $r_0$ can be replaced by $1$ and $M$ by $M+1$,
which should be sufficient for most applications.
However, the optimality of the estimates and their scale invariance are guaranteed only with $r_0$ and $M$ as above, this is the level of difficulty of the problem when unbounded coefficients are allowed. We also warn the reader that $\|\cdot\|_{q,r_0,\Omega}$
does {\it not} behave as a norm when applied to the coefficients of the operator themselves,
due to the dependence of $r_0$ on these coefficients (see Proposition \ref{lemr0ul}
below.)

{\cbl
\subsection{General results of Morel-Oswald and DHI type} We give extensions to general domains and operator of the statements  in the previous section, which we gather in the following main theorem.}

\begin{thm}\label{thm4gen}
 Assume \eqref{hyp1}-\eqref{hyp2}
and let $r_0$, $M$  be defined in  \eqref{defr0}, \eqref{defM}.
\begin{itemize}

\item[(i)] If $u\in H^1(\Omega)$ is a weak supersolution
of $-\mathcal{L}u\ge f$ and $f,u\ge 0$ in $\Omega$, then
\be\label{conclHopfgen}
\frac{u(x)}{d(x)}
\ge
e^{-C_0(r^{-1}_\Omega+ M)D}D^{-n}
 \int_\Omega fd.
\ee

\item[(ii)] 1. For any weak solution $u$ of $-\mathcal{L}u=0$, $u>0$ in $\Omega$, we have the estimate
\be\label{conclHopfgendim1}
 \frac{d(x)|\nabla u(x)|}{u(x)}
\le C_0\max\bigl\{1, (r_\Omega^{-1}+M)d(x)\bigr\},\quad x\in \Omega.
\ee
2. If $n=1$, then \eqref{conclHopfgendim1} is valid for any positive weak solution of $-\mathcal{L}u=f\ge0$. Here a weak solution is a function
$u\in W^{1,p}(-R,R)$ with $p=\max(2,q/(q-1))$,
which satisfies the equation in the usual $H^1$ sense.

\smallskip 
\item[(iii)] If $f\in L^q(\Omega)$ with $f\ge 0$, $f\not\equiv 0$, then
for any weak solution $u$ of $-\mathcal{L}u=f$, $u>0$ in~$\Omega$, we have the
estimate
$$
 \frac{d(x)|\nabla u(x)|}{u(x)}
\le C_0\max\bigl\{1,(r_\Omega^{-1}+M)d(x)\bigr\} +
e^{C_0(r^{-1}_\Omega+ M)D} D^n
\frac{\|f\|_{L^q(\Omega)}}{\|f\|_{L^1_d(\Omega)}}\, d^{1-\frac{n}{q}}(x),\quad x\in \Omega.
$$
\end{itemize}
\end{thm}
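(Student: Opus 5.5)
We prove the three parts in order, since (iii) uses (i). Throughout we rescale at scale $r_0$, the radius in \eqref{defr0}. On a ball of radius $r\le r_0$, the rescaled operator $\tilde{\mathcal L}$ (with $\tilde u(y)=u(x_0+ry)$) has H\"older bracket of $A$, $\|b_1\|_\infty$, $[b_1]_\alpha$, $\|b_2\|_{L^q}$ and $\|c\|_{L^q}$ all bounded by universal constants. This is exactly how the exponents $\beta_q,\gamma_q$ are chosen. So on each such ball we may use the standard interior and boundary Schauder/$C^{1,\alpha}$ estimates, the global weak Harnack inequality of \cite{GSS} (Theorem~\ref{BHIoptim}), and, after shrinking $r$ by a universal factor, the maximum principle for $\tilde{\mathcal L}$.

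For (i) we follow the Brezis--Cabr\'e idea locally and then propagate.

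Step 1. On a ball $B=B_\rho(x_0)$ with $\rho=\varepsilon r_0$, or on a boundary-adapted half-ball flattened via the diffeomorphism from the definition of $\rho_\Omega$, the maximum principle holds. We compare $u$ with the solution $v$ of $-\mathcal L v=f\chi$, $v=0$ on $\partial B$. Using the Green function lower bound $G(x,y)\ge c\,\rho^{-n}d_B(x)d_B(y)$ for the uniformly controlled rescaled operator, we get $u/d\ge c\rho^{-n}\int_{B}fd$ on the inner part of $B$. If we prefer to avoid Green function bounds, we use duality against the first eigenfunction-type barrier as in \cite{BrC}, together with the boundary Hopf lemma for the rescaled operator.

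Step 2. We cover $\Omega$ by $N\lesssim (D/r_0)^n$ such balls. We pick the ball that carries at least a fraction $1/N$ of $\int_\Omega fd$; the weights $d$ in a ball and in $\Omega$ are comparable up to factors $r_0/D$, and this loss is absorbed into $e^{CD/r_0}$.

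Step 3. We propagate positivity of $u/d$ from that ball to every point along a chain of at most $CD/r_0$ balls. Each step uses the Harnack inequality, or the global weak Harnack inequality of Theorem~\ref{BHIoptim} near $\partial\Omega$, with universal constants, so the total loss is $e^{-C D/r_0}$. Since $r_0^{-1}\le C(r_\Omega^{-1}+M)$ by \eqref{defr0}, this gives \eqref{conclHopfgen}.

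For (ii).1 we argue by contradiction with the doubling lemma of \cite{PQS}. Set $w=\log u$ and suppose $d|\nabla w|/\max\{1,(r_\Omega^{-1}+M)d\}$ is unbounded along a sequence of solutions and points. The doubling lemma produces points $x_k$ with $|\nabla w(x_k)|=:\mu_k\to\infty$ relative to the local scale, and $|\nabla w|\le2\mu_k$ on $B_{k/\mu_k}(x_k)$. We rescale by $\mu_k^{-1}$. The functions $\tilde u_k(y)=u(x_k+y/\mu_k)/u(x_k)$ satisfy $|\nabla\log\tilde u_k|\le2$ on $B_k$, hence $e^{-2|y|}\le\tilde u_k\le e^{2|y|}$. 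The rescaled coefficients tend to constant $A_\infty$ and zero lower-order terms, because $\mu_k\gg M$ and the scaling exponents are subcritical. Interior Schauder estimates give $C^{1,\alpha}_{loc}$ convergence to a positive solution of $\mathrm{div}(A_\infty\nabla\tilde u)=0$ on $\mathbb R^n$ with $|\nabla\tilde u(0)|=1$. By Liouville for positive solutions, $\tilde u$ is constant, a contradiction. When $x_k$ approaches the boundary at the rescaled scale, the limit is a positive solution in a half-space vanishing on the boundary, so $\tilde u=cy_n$. This contradicts $d|\nabla w|\le C$ combined with the normalization if we measure the quantity $d|\nabla w|$ itself. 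We therefore set up the doubling with the weight $d$ so that the half-space limit is normalized against $y_n|\nabla\log y_n|=1$; this forces a bounded value and gives the contradiction.

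For (ii).2 and (iii) we keep the same blow-up.

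For (iii), the right-hand side rescales to $\tilde f_k=\mu_k^{-2}f/u(x_k)$. To kill it, we need $u(x_k)\gg\mu_k^{-2+n/q}\|f\|_{L^q}$. Part (i) supplies this: $u(x_k)\ge d(x_k)e^{-C(\cdot)D}D^{-n}\|f\|_{L^1_d}$. We then define the doubling quantity to include the extra term $e^{CD}D^n\|f\|_q\|f\|_{L^1_d}^{-1}d^{1-n/q}$, so that blow-up at points where this term is dominated forces $\|\tilde f_k\|_{L^q}\to0$.

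For $n=1$, $f\ge0$ makes $u$ concave in suitable variables after the reduction to an operator without $b_1$. A positive concave-type limit on a line or half-line must be constant or linear, and the one-dimensional ODE structure gives $|u'|/u\le C/d$ directly without lower bounds on $f$.

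The main obstacle is the boundary blow-up: handling $x_k$ near $\partial\Omega$ while matching the weight $d$ in the doubling quantity. Flattening changes the coefficients, and this is absorbed only because $r_0\le r_\Omega$.
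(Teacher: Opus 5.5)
Your architecture matches the paper's. For (i): a local Morel--Oswald estimate on balls of radius comparable to $r_0$, the choice of a ball carrying at least $N^{-1}\int_\Omega fd$, and propagation through the global weak Harnack inequality \eqref{sharpWBHI}. For (ii)--(iii): the paper's second proof, namely the logarithmic variable, the doubling lemma of \cite{PQS}, Liouville, and the Morel--Oswald lower bound on $u(x_k)$ to kill the rescaled right-hand side. Your bookkeeping for (iii) is correct. However, (ii).2 is not actually proved, and the obstacle you single out does not exist.

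\textbf{There is no boundary blow-up.} The contradiction hypothesis reads $L_k(y_k)\ge 2k\,d(y_k)^{-1}$, so the doubling lemma applied relative to $\partial\Omega$ returns $x_k$ with $k/L_k(x_k)\le d(x_k)/2$. Since $\rho_k=|\nabla v_k(x_k)|^{-1}\le L_k(x_k)^{-1}$, the rescaled domain contains $B_k$, and $|\nabla w_k|\le 3$ there. The limit always lives on all of $\R^n$, and no flattening is needed. If half-space limits could occur, your argument would in fact break. The function $c(y_n+1)$ is compatible with $|\nabla\log\tilde u(0)|=1$, and for $n=1$ a positive linear function on a half-line is concave with nonzero slope, so ``constant or linear'' gives no contradiction. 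Note also that the paper proves (ii).1 and (iii) in a few lines. At scale $r=\min\{r_0,d(x_0)\}$, the interior $C^1$ estimate and the Harnack inequality give $r|\nabla u(x_0)|\le Cu(x_0)+Cr^{2-n/q}\|f\|_{q,r_0,\Omega}$, and for (iii), part (i) bounds $u(x_0)/d(x_0)$ from below.

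\textbf{For (ii).2 you give no argument.} Since the estimate contains no $f$-term, $g_k=e^{-w_k-v_k(x_k)}\tilde f_k\ge0$ is not small. Hence there are no $C^1$ bounds, the limit satisfies only an inequality, and the normalization $|w_k'(0)|=1$ is lost in the limit. The paper handles this in four steps:
\begin{itemize}
\item it bounds $g_k$ in $L^1_{loc}$ by testing with a cutoff, using $g_k\ge0$ and $|w_k'|\le3$;
\item it deduces that the flux $\tilde A_kw_k'+\tilde b_{1,k}$ is bounded in $W^{1,1}_{loc}$, hence $w_k\to w$ in $W^{1,m}_{loc}$;
\item it shows $w$ is nonconstant via $(\tilde A_kw_k')'\le$ (terms tending to $0$), which forces $w_k'\le-\eta$ on $[0,1]$ or $w_k'\ge\eta$ on $[-1,0]$;
\item it shows $(e^w)''\le0$, so $e^w>0$ is concave on $\R$, hence constant, a contradiction.
\end{itemize}
Alternatively, your ``concave in suitable variables'' remark can be turned into a direct proof, more elementary than the paper's, but it has to be carried out. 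On $(x_0-r,x_0+r)$ with $r=\min\{d(x_0),\delta r_0\}$, take $\phi>0$ with $\mathcal{L}\phi=0$ and $|\phi'|/\phi\le C/r$; this is possible since the rescaled coefficients are small. Set $\beta=a\phi'+(b_1+b_2)\phi$ and $\mu=\exp\int\beta/(a\phi)$. Then $w=u/\phi$ solves $-(\mu a\phi\,w')'=\mu f\ge0$, so $w$ is concave in $s=\int dx/(\mu a\phi)$. The bound $|W'|\le W/\mathrm{dist}$ for positive concave $W$ then gives $d|u'|/u\le C\max\{1,d/r_0\}$.

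\textbf{In (i), the local estimate is black-boxed, and that is where the work lies.} Both of your routes need a quantitative Hopf lemma for the adjoint $\tilde{\mathcal{L}}^*v=\mathrm{div}(\tilde A^T\nabla v-\tilde b_2v)-\tilde b_1\cdot\nabla v+\tilde cv$. This is exactly the factor $d_B(y)$ in your Green function bound. The divergence-form coefficient $\tilde b_2$ of the adjoint is only in $L^q$, so $\tilde{\mathcal{L}}^*$ falls outside \eqref{hyp2}. The paper circumvents this by writing $w=\phi w_1$ with $\tilde{\mathcal{L}}^*\phi=0$ and $\phi$ close to $1$. This leaves a pure-drift operator with $L^q$ drift, which is then treated as a perturbation of $\mathrm{div}(A\nabla\cdot)$.
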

We recall once more that $
r_{\Omega}= c(n)R$ and $D=2R$ when $\Omega=B_R$ so this theorem trivially reduces to those in the introduction, in the particular case $\mathcal{L}=\mathcal{L}_0$ considered in~\eqref{partoper} (observing also that $M=M_0$ from \eqref{partoper_defM} when $\mathcal{L}=\mathcal{L}_0$ has bounded coefficients).

\begin{rem}\label{remdefsol1}
In Theorem \ref{lgeu}(ii) we have slightly modified the definition of a weak solution, to guarantee that
the term $bu'$ (as well as $cu$) belongs to $L^1(-R,R)$, which  is not the case if $u$ is merely in $H^1(-R,R)$ when $q<2$.
Actually, if we take $u\in W^{1,\infty}(-R,R)$, the  statement in (ii) remains true if $b,c,f\in L^1(-R,R)$ instead of~$L^q(-R,R)$.
\end{rem}

The ramifications and methods of proof of Theorem \ref{thm4gen} were already discussed in the introduction. As we noted there, we will also use the global Harnack inequality from \cite{GSS}, in which we need to specify the constants. We give the statement here, because of its independent importance.

\begin{thm}\label{BHIoptim}
 Let $\Omega$ be a bounded $C^{1,\bar\alpha}$-domain of $\mathbb{R}^n$ with  geodesic diameter $D$, and let  $r_\Omega$ be defined in \eqref{def_romega}. Assume \eqref{hyp1}-\eqref{hyp2}.
There exist  constants $\epsilon, C_0>0$ depending only on $n,q,,\lambda,\Lambda,\alpha,\bar\alpha$, such that if $u\ge0$ in $\Omega$ is a weak solution of $-\mathcal{L}u\ge f $ in $\Omega$, for some $f\in L^q(\Omega)$,
then
\begin{equation}\label{sharpWBHI}
\left(\int_{\Omega} \left(\frac{u}{d}\right)^\epsilon\right)^{1/\epsilon} \le
D^{n/\epsilon}
e^{C_0(r^{-1}_\Omega+ M)D} \left( \inf_{\Omega} \frac{u}{d} +
D^{1-\frac{n}{q}} \|f\|_{L^q(\Omega)}\right),
\end{equation}
where $M=M(\ld,\Omega)$ is defined in \eqref{defM}.
If $-\mathcal{L}u= f $, $u\ge0$ in $\Omega$, $u=0$ on $\partial \Omega$, then
\begin{equation}\label{sharpBHI}
\sup_{\Omega} \frac{u}{d}\le
e^{C_0(r^{-1}_\Omega+ M)D}\inf_{\Omega} \frac{u}{d} +
e^{C_0(r^{-1}_\Omega+ M)D}D^{1-\frac{n}{q}} \|f\|_{L^q(\Omega)}.
\end{equation}
\end{thm}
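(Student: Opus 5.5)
The plan is to reduce Theorem~\ref{BHIoptim} to local estimates at the natural scale $r_0$, and then chain them across $\Omega$ with a number of steps controlled by $(r_\Omega^{-1}+M)D$. This is exactly what produces the factor $e^{C_0(r_\Omega^{-1}+M)D}$.

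\textbf{Step 1: local estimates at scale $r_0$.} By the definition \eqref{defr0} of $r_0$, rescaling $x\mapsto x_0+r_0x$ (for $r\le r_0$) gives an operator on a unit-size domain whose coefficients are normalized:
\begin{itemize}
\item the H\"older bracket of $A$ is at most $1$;
\item $\|b_1\|_\infty$, $[b_1]_\alpha$, $\|b_2\|_{L^q}$ and $\|c\|_{L^q}$ are all at most $1$, since the exponents $\beta_q,\gamma_q$ are precisely the scaling exponents;
\item the boundary is flattened by the diffeomorphism $\Phi$, with $C^{1,\bar\alpha}$ bounds at most $1$.
\end{itemize}
On such normalized pieces the global boundary weak Harnack inequality of \cite{GSS} (in its boundary-half-ball form) applies with universal constants. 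Near a boundary point $\xi$ this gives
\[
\Bigl(\fint_{\Omega\cap B_{r}(\xi)}(u/d)^\epsilon\Bigr)^{1/\epsilon}\le C\Bigl(\inf_{\Omega\cap B_{r/2}(\xi)}\frac ud + r^{1-n/q}\|f\|_{L^q}\Bigr).
\]
In the interior, the De Giorgi--Moser weak Harnack and local boundedness estimates hold in balls $B_r(x)$ with $r\le\min(r_0,d(x)/2)$. On these balls $d$ is comparable to constants, so the same inequality holds for $u/d$ after accounting for the factor $d$. One detail needs checking in the scaling: the $f$ term picks up $r^{2-n/q}/r=r^{1-n/q}$ from dividing by $d\sim r$. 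One would also use that $r_0\ge c/(r_\Omega^{-1}+M)$, which follows from Proposition~\ref{basicr0}.

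\textbf{Step 2: Harnack chain.} Any two points of $\Omega$ are joined by a path of length at most $D$. I would cover the region $\{d\ge r_0/4\}$ and a boundary layer by balls of radius about $r_0$, consisting of interior balls and boundary balls centered on $\partial\Omega$. Consecutive balls in the chain overlap in sets of measure at least $cr_0^n$. Comparing inf-type and $L^\epsilon$-type quantities on the overlaps, using the local estimate together with its weak-Harnack/local-max counterpart for supersolutions, each step multiplies by a constant $C$. The number of steps is at most $CD/r_0\le C(r_\Omega^{-1}+M)D$. The error terms $r_0^{1-n/q}\|f\|$ accumulate with the same geometric factor, and $r_0\le D$.

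\textbf{Step 3: global integral bound.} Summing over a covering of $\Omega$ by at most $C(D/r_0)^n$ balls gives $\int_\Omega(u/d)^\epsilon\le C(D/r_0)^n r_0^n e^{\epsilon C(\cdot)D}(\ldots)^\epsilon$. Taking $\epsilon$-th roots yields the factor $D^{n/\epsilon}$ and proves \eqref{sharpWBHI}. For \eqref{sharpBHI}, with $u=0$ on $\partial\Omega$ and $u$ a solution, I would add the local boundary maximum estimate
\[
\sup_{\Omega\cap B_{r/2}(\xi)}\frac ud\le C\Bigl(\bigl(\fint (u/d)^\epsilon\bigr)^{1/\epsilon}+r^{1-n/q}\|f\|\Bigr),
\]
obtained from the $C^{1,\alpha}$ boundary estimate plus local boundedness, and its interior analogue. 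I would then chain as in Step 2 so that only local averages appear, which avoids the $D^{n/\epsilon}$ factor.

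\textbf{Main obstacle.} I expect the main difficulty to be the chaining near the boundary. The function is $u/d$ rather than $u$, so interior balls with $d(x)<r_0$ must have radius comparable to $d(x)$. That would produce far too many steps, roughly $\log(r_0/d)$ of them. To avoid this I would first move from any point straight to the boundary layer and then use only the boundary Harnack pieces along $\partial\Omega$, which have radius $r_0$. Throughout, I would pass only through points at distance about $r_0$ from $\partial\Omega$, where boundary and interior balls overlap with comparable $d$. This requires checking that such paths exist with length at most $CD$, using $r_0\le r_\Omega$ and the paraboloid condition \eqref{def_parab}.
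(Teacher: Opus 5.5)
Your proposal is correct and is essentially the paper's proof. The ingredients match:
\begin{itemize}
\item local weak Harnack estimates for $u/d$ at scale $r_0$, namely the estimate of \cite{GSS} after flattening on boundary-centred balls and De Giorgi--Moser on balls centred at distance $\gtrsim r_0$ from $\partial\Omega$;
\item a Harnack chain of at most $CD/r_0=C(r_\Omega^{-1}+M)D$ steps over a covering by $C(D_0/r_0)^n$ such balls;
\item summation over the covering for \eqref{sharpWBHI};
\item a local maximum estimate on each ball for \eqref{sharpBHI}; the paper cites \eqref{fixlocmax} from \cite{GSS}, and your boundary $C^{1,\alpha}$-plus-local-boundedness derivation gives the same estimate.
\end{itemize}

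The obstacle you flag is settled by Proposition~\ref{geodes}. All covering balls are centred either on $\partial\Omega$ or at distance $\ge 3r_0/4$ from it; this is where the paraboloid condition enters. So one may follow an arbitrary curve of length $\le D$ and, at each exit point, pick a covering ball containing it. No special paths are needed. The chain also should not be routed along $\partial\Omega$, since boundary-layer paths can be far longer than $D$.
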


The proof of Theorem \ref{BHIoptim} uses a Harnack chain argument similar to the one in the
proof of \cite[Theorem 2.1]{SS2}, replacing Theorem A there by \cite[Theorem 1.1]{GSS},
combined with a (sharp) estimate of the length of the chains in terms of the geodesic diameter (Proposition~\ref{geodes}).
For readers' convenience we list the technical differences and give a proof sketch of Theorem~\ref{BHIoptim} in the appendix.

Further, we discuss the sharpness of the constants in the above estimates. The statement (i) in Theorem \ref{thm4gen} is already covered by Theorem \ref{Prop-Optim-Hopf0} {\cbl (and \eqref{culinfty} below; see Section \ref{sec-proofoptim})}. The optimality of the constant in the global weak Harnack estimate \eqref{sharpWBHI} is trivial and follows similarly to the optimality of the interior estimate (see \cite[Remark~2.3]{SS2}),  by considering one-dimensional solutions of $u^{\prime\prime} +cu = 0$ or $u^{\prime\prime} +bu^\prime = 0$.  In the following proposition we prove that the constant in the differential Harnack estimate \eqref{concllgeu} (resp.~Theorem \ref{thm4gen} (ii))  cannot be improved, even if we restrict ourselves to solutions which vanish on the boundary. The same is valid for the constant in the global Harnack inequality \eqref{sharpBHI}.

\begin{prop}\label{Optimality_Harnack}
Let $R>0$.
There exists a constant
$\lambda_0=\lambda_0(n)>0$,  sequences $b_i, c_i\in C^\infty(\overline{B_R})$,
and a classical solution $u_i>0$ of either of the following problems
\be \label{optimLinfty1ci}
-\ld_iu_i:=-\Delta u_i+c_iu_i=0, \quad \mbox{in } B_R,\qquad
u_i=0, \quad \mbox{on } \partial B_R,
\ee
\be \label{optimLinfty1bi}
-\ld_iu_i:=-\Delta u_i + b_i\cdot\nabla u_i - \lambda_0R^{-2} u_i=0, \quad \mbox{in } B_R,\qquad
u_i=0 \quad \mbox{on } \partial B_R,
\ee
such that $\|b_i\|_{q,r_i,B_R}$, $\|c_i\|_{q,r_i,B_R}\to\infty$ with $r_i=r_0(\ld_i,B_R)$,
\be \label{optimLinfty23}
\frac{\sup_{B_R}(u_i/d)}{\inf_{B_R}(u_i/d)}\ge Ce^{CM_iR}, \qquad \mbox{and} \qquad
\frac{|\nabla u_i|}{u_i}\ge CM_i\quad\hbox{on $\{|x|=R/2\}$},
\ee
where $C=C(n)>0$ and $M_i=\|c_i\|^{\gamma_q}_{q,r_i,B_R}$, resp.~$M_i=\|b_i\|^{\beta_q}_{q,r_i,B_R}$.
\end{prop}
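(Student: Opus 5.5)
We build both families explicitly. The idea is to prescribe a radial positive function $u_i$ first, equal to the first Dirichlet eigenfunction near the centre and near $\partial B_R$ and decaying like $e^{-k_i|x|}$ in between, and then \emph{define} the coefficient $c_i$ (resp.\ $b_i$) from the equation. Let $\varphi>0$ be the first Dirichlet eigenfunction of $-\Delta$ in $B_R$, with eigenvalue $\lambda_1(B_1)R^{-2}$, and set $\lambda_0:=\lambda_1(B_1)$. Fix a smooth radial $h(x)=h(|x|)$ such that
\begin{itemize}
\item $h(r)=r$ for $r\in[R/8,7R/8]$;
\item $h$ is constant on $[0,R/16]$ and on $[15R/16,R]$;
\item $0\le h'\le 1$ and $|h''|\le C/R$.
\end{itemize}
For $k=k_i\to\infty$ put $u_i:=\varphi\, e^{-k h}$. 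Then $u_i$ is smooth, positive in $B_R$, and vanishes on $\partial B_R$.

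\emph{Potential case.} Define $c_i:=\Delta u_i/u_i$, so that $-\Delta u_i+c_iu_i=0$. A direct computation gives
$$c_i=-\lambda_0R^{-2}-2k\,\nabla h\cdot\frac{\nabla\varphi}{\varphi}+k^2|\nabla h|^2-k\Delta h.$$
Where $h$ is constant (near the centre and near $\partial B_R$) this reduces to $c_i=-\lambda_0R^{-2}$. On the support of $\nabla h$, the ratio $|\nabla\varphi|/\varphi$ is bounded by $C/R$. Hence $c_i\in C^\infty(\overline{B_R})$ and $\|c_i\|_\infty\le Ck^2$ for $k\ge 1/R$, while $c_i\ge k^2/2$ on the annulus $R/8<|x|<7R/8$.

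\emph{Drift case.} Take $b_i=\beta(|x|)\,x/|x|$ with
$$\beta=\frac{u''+\frac{n-1}{r}u'+\lambda_0R^{-2}u}{u'}.$$
This is legitimate because
$$u'=e^{-kh}\bigl(\varphi'-kh'\varphi\bigr)<0\quad\mbox{for } r>0,$$
as both summands are nonpositive and $\varphi'<0$. Wherever $h'=0$ the numerator vanishes, since $\Delta\varphi+\lambda_0R^{-2}\varphi=0$; in particular $\beta\equiv0$ near the origin, so $b_i$ is smooth there.

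Elsewhere the numerator equals
$$e^{-kh}\bigl(-2kh'\varphi'-kh''\varphi+k^2h'^2\varphi-k\tfrac{n-1}{r}h'\varphi\bigr),$$
and $|u'|\ge e^{-kh}\bigl(kh'\varphi+|\varphi'|\bigr)$. The only delicate term is $kh''\varphi$ in the transition zones, where $h'$ may be small. There, however, $|\varphi'|\ge c\varphi/R$ and $|h''|\le C/R$, so this term is still controlled by the denominator. Altogether $\|b_i\|_\infty\le Ck$.

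\emph{Computing $r_i$ and $M_i$.} For $r\le R$ the uniformly local norms satisfy
$$\|c_i\|_{q,r,B_R}\sim k^2r^{n/q},\qquad \|b_i\|_{q,r,B_R}\sim k\,r^{n/q}.$$
The upper bounds follow from $|c_i|\le Ck^2$, $|b_i|\le Ck$; the lower bounds come from $c_i\ge k^2/2$ on the annulus, resp.\ from $\beta\ge k/2$ on $|x|\in[R/4,3R/4]$, where $h'=1$ and $k$ is large. Plugging these into \eqref{defr0}, and using $\gamma_q(2-n/q)=1$ and $\beta_q(1-n/q)=1$, gives $r_i\sim 1/k$. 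Consequently $M_i\sim k$, and $\|c_i\|_{q,r_i,B_R}\sim k^{2-n/q}\to\infty$, $\|b_i\|_{q,r_i,B_R}\sim k^{1-n/q}\to\infty$.

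\emph{The two estimates in \eqref{optimLinfty23}.} Since $\varphi/d$ is bounded above and below by $C^{\pm1}R^{-1}$, the ratio of $u_i/d$ at a point with $|x|\le R/16$ to its value near $\partial B_R$ is comparable to $e^{k(h(R)-h(0))}\ge e^{3kR/4}$. This gives the first inequality with $M_i\sim k$. At $|x|=R/2$ we have
$$\frac{|\nabla u_i|}{u_i}\ge k-\frac{|\nabla\varphi|}{\varphi}\ge k-\frac CR\ge \frac k2\ge CM_i$$
for $k$ large, which is the second inequality.

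The main obstacle I expect is the drift case. One must check that $\beta$ stays $O(k)$ uniformly through the transition zones of $h$, which is where the lower bound $|\varphi'|\ge c\varphi/R$ away from the origin is used. One must also make the two-sided comparison of the uniformly local norms precise, so that $r_i$, and hence $M_i$, is genuinely of order $1/k$, resp.\ $k$.
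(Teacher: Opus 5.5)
Your construction is correct and is essentially the paper's. The paper also takes $u=\varphi e^{\pm jh}$ with a radial monotone profile $h$, reads off $c_i$ (resp.\ a drift parallel to $\nabla u_i$) from the equation, and bounds the drift using the lower bound on $|\nabla\varphi|$ away from the origin. It differs only in using $h=\varphi^2$ in the potential case, and in getting $M_i\le Ck$ and $M_i\to\infty$ from Proposition~\ref{lemr0ul}(i) and Lemma~\ref{Mitoinfty} instead of your direct two-sided computation $r_i\sim 1/k$.

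There is one harmless sign slip. Since $u_i'<0$ and the numerator is positive on $R/4\le|x|\le 3R/4$ for large $k$, you actually have $\beta\le -k/2$ there. The lower bound you need, and which does hold, is $|\beta|\ge k/2$.
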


\begin{rem}\label{remoptloggrad}
We observe that, under the assumptions of Theorem \ref{lgeu}(i), we always have
$\frac{d(x)|\nabla u(x)|}{u(x)}\to 1$ as $|x|\to R$ which, together with
\eqref{optimLinfty23},
shows that the constant in the differential Harnack  estimate \eqref{concllgeu} cannot be improved.
\end{rem}

\begin{rem} We  will  see
that the exponential constant in front of the right-hand side $f$ in the Harnack inequality \eqref{sharpBHI}
  cannot be improved either, Proposition~\ref{Optimality_Harnack2}.
\end{rem}

Even though we focus here  on the  dependence of the constants in the coefficients and the size of the domain, it is also worth recalling  and discussing the much more classical question of the optimality of the Lebesgue norms which appear in the estimates. It is very well known that  $C^1$-estimates fail for coefficients or right-hand sides which are only in $L^n$ instead of $L^q$, $q>n$ (for instance $u(x)=|x|\log|\log|x||$ solves $\Delta u = f\in L^n$, and variations and smooth approximations of that function can be used to violate our estimates for right-hand sides in $L^n$).
Further, the norm $\|f\|_{L^1_d}$ in the right-hand side of the Morel-Oswald inequality  cannot be replaced by $\|f\|_{L^p_d}$ nor by $\|f\|_{L^1_{d^\epsilon}}$, for $p>1$ or $\epsilon<1$ (this follows from classical estimates for the Green function of the Laplacian, see Remark~\ref{Hopf-optL1d}). 
However, it is much less obvious that the norm $\|f\|_{L^1_d}$ in the denominator in the right-hand side of the differential Harnack estimate
\eqref{concllgeu2} or Theorem \ref{thm4gen} (iii) cannot be improved. 
 We can prove  only in dimension two (Proposition \ref{PropLoggradOptimality} below) that \eqref{concllgeu2} fails if $\|f\|_{L^1_d}$
is replaced by $\|f\|_{L^p_d}$, $p>1$, and $q$ is strictly larger but close to $n$.
In dimension $n\ge 3$, we have the same conclusion on the failure of \eqref{concllgeu2} only for $p>n/2$, so that the optimality
of the $L^1_d$ norm in \eqref{concllgeu2} remains unclear in this case. We also do not know whether the exponential constant in front of the second term in the right-hand side of \eqref{concllgeu2} is sharp.

\begin{rem}\label{remcoefopt}
In the end, we observe that solutions of \eqref{defdiv} are known to be continuously differentiable under weaker hypotheses on the  coefficients or the domain. For instance, we could assume that $A,b_1$ have Dini mean oscillation, and/or that $\partial\Omega$ is $C^{1,Dini}$ (see \cite{DEK}). We expect our results to have natural extensions to this case, however the form of the quantities $M$ and $r_\Omega$ would probably become rather complicated to write explicitly. The same remark applies to the integrability of the lower-order coefficients, which might be assumed to belong to Orlicz, Kato or Lorentz spaces that are intermediate between $L^n$ and $L^q$, $q>n$, and still ensure $C^1$-regularity.

Furthermore, for the Morel-Oswald inequality it is actually sufficient to assume the  regularity required in \eqref{hyp2} only in some neighbourhood of $\partial\Omega$ instead of the whole $\Omega$, while only assuming \eqref{hyp1} and $b_1,b_2\in L^q(\Omega)$, $c,f\in L^{q/2}(\Omega)$, $q>n$. Indeed, the global Harnack inequality in \cite{GSS} is already written under this hypothesis, and the proof of Theorem \ref{thm4gen}(i) requires $C^1$ regularity only in a neighborhood of the boundary. We leave the easy extension of that proof and the quantity $M$ to the interested reader.  

Finally, we recall that the Hopf-Oleinik lemma, and a fortiori the Morel-Oswald estimate, fail if we take $\alpha=0$ or $\bar \alpha =0$ in the above (Remark \ref{Hopf-optC1} below). 
\end{rem}

\section{Auxiliary results} \label{sec-prelim}

\subsection{Properties associated with uniformly local norms}

In this subsection we give some useful properties of the quantities $r_0, M$,  and the
associated uniformly local norms. Their proofs can be found in the appendix.

\begin{prop}\label{basicr0}  Recalling definitions \eqref{def_romega}-\eqref{defM},  we have
\begin{equation}\label{relMr0}
r_0^{-1}=r^{-1}_\Omega+ M,\qquad \mbox{hence }\; \frac{1}{2}\min\bigl(r_\Omega,M^{-1}\bigr)\le r_0\le\min\bigl(r_\Omega,M^{-1}\bigr).
\end{equation}
\end{prop}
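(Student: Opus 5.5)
The plan is to study the function
$$F(r):=r^{-1}_\Omega+ [A]^{\frac{1}{\alpha}}_{\alpha, r, \Omega} + \|b_1\|_{L^\infty(\Omega)} + [b_1]^{\frac{1}{\alpha+1}}_{\alpha, r, \Omega}+\|b_2\|^{\beta_q}_{q,r,\Omega} + \|c\|^{\gamma_q}_{q, r, \Omega},\qquad r>0.$$
By \eqref{defr0}, $r_0=\sup\{r>0:\ rF(r)\le 1\}$. By \eqref{defM}, $M=F(r_0)-r_\Omega^{-1}$. So the identity $r_0^{-1}=r_\Omega^{-1}+M$ is exactly the statement $r_0F(r_0)=1$, and the rest of the argument is about that equation.

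\emph{Monotonicity and finiteness.} Every term of $F$ is a supremum over balls $B_r(x)$, and these balls grow with $r$. Hence $F$ is nondecreasing, and $G(r):=rF(r)$ is strictly increasing. Since $F\ge r_\Omega^{-1}$, we get $G(r)>1$ for $r>r_\Omega$, so $r_0\le r_\Omega<\infty$. By \eqref{hyp2}, $F(r)\le F(r_\Omega)<\infty$ for $r\le r_\Omega$, so $G(r)\to0$ as $r\to0$ and therefore $r_0>0$.

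\emph{Left-continuity.} For a fixed $x$ and finite $q$, the map $r\mapsto\|h\|_{L^q(\Omega\cap B_r(x))}$ is continuous. A supremum of continuous functions is lower semicontinuous, and a nondecreasing lower semicontinuous function is left-continuous. For the H\"older brackets, the open balls $B_r(x)$ are the increasing union of the $B_s(x)$ with $s<r$, so the supremum over pairs $y,z\in B_r(x)$ is the limit of the suprema over $B_s(x)$. Thus $G$ is left-continuous, and so $G(r_0)\le1$.

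\emph{Right-continuity (the reverse inequality).} To get $G(r_0)\ge 1$ I need $\lim_{r\downarrow r_0}F(r)=F(r_0)$; then $G(r)>1$ for $r>r_0$ forces $G(r_0)\ge1$.

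1. The term $\|b_1\|_{L^\infty}$ does not depend on $r$.
2. For the $L^q$ terms with $q<\infty$, use a single integrable function $|h|^q$ together with $\sup_x|\Omega\cap(B_{r+\delta}(x)\setminus B_r(x))|\to0$. Absolute continuity of the integral then makes $\|h\|_{L^q(\Omega\cap B_{r+\delta}(x))}^q-\|h\|_{L^q(\Omega\cap B_{r}(x))}^q\to0$ uniformly in $x$. This gives continuity of $r\mapsto\|h\|_{q,r,\Omega}$.
3. For the brackets $[A]_{\alpha,r,\Omega}$ and $[b_1]_{\alpha,r,\Omega}$, first take pairs $y,z\in B_{r+\delta}(x)$ with $|y-z|<2r$. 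Such a pair lies in some ball $B_r(x')$, so it is already controlled by the bracket at radius $r$. The remaining pairs have $|y-z|\in[2r,2r+2\delta)$, and I must show their H\"older quotients cannot exceed $[\cdot]_{\alpha,r_0,\Omega}$ in the limit.

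\emph{Main obstacle.} Step 3, the right-continuity of the uniformly local H\"older bracket, is the step I expect to be delicate. My first attempt is a compactness argument. Take pairs $(y_\delta,z_\delta)$ nearly realizing the bracket at radius $r_0+\delta$, and extract limits $y_\delta\to y$, $z_\delta\to z$ in $\overline\Omega$ with centers $x_\delta\to x$. Continuity of $A$ and $b_1$ on $\overline\Omega$ (implied by \eqref{hyp2}) passes the quotient to the limit. The limiting pair $y,z$ lies in $\overline{B_{r_0}(x)}\cap\overline\Omega$, and a further approximation from inside the open ball should bound the quotient by the bracket at radius $r_0$. The approximation works for pairs with $|y-z|<2r_0$. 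For antipodal limit pairs with $|y-z|=2r_0$ it does not follow directly, and I would need either a finer argument or to read the definition with closed balls.

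\emph{Conclusion.} Granting $r_0F(r_0)=1$, we obtain $r_0^{-1}=r_\Omega^{-1}+M$. The two-sided bound follows from $\max(a,b)\le a+b\le 2\max(a,b)$ applied with $a=r_\Omega^{-1}$ and $b=M$, then taking reciprocals.
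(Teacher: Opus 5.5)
\textbf{Gap: right-continuity of the H\"older brackets is not proved.} Your strategy is the paper's. You show that $G(r)=rF(r)$ is strictly increasing, that left-continuity gives $G(r_0)\le 1$, and that right-continuity gives $G(r_0)\ge 1$. Your arguments for monotonicity, for left-continuity (via the increasing union of open balls) and for the $L^q$ terms are fine, as is the final two-sided bound.

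The gap is the one you flag. Right-continuity of $r\mapsto[A]_{\alpha,r,\Omega}$ and $r\mapsto[b_1]_{\alpha,r,\Omega}$ at $r_0$ is exactly what gives $G(r_0)\ge1$. Your compactness argument stops at a limit pair $y_0\ne z_0$ with $|y_0-x_0|,|z_0-x_0|\le r_0$. You cannot yet compare its quotient with the open-ball bracket at radius $r_0$.

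This is not confined to antipodal pairs. If $x_0\in\partial\Omega$ and $y_0\in\partial B_{r_0}(x_0)$, moving the center into the open lens $B_{r_0}(y_0)\cap B_{r_0}(z_0)$ may leave $\overline\Omega$. Definition \eqref{defHbracket} only allows centers in $\overline\Omega$. The same objection applies to your remark that a pair with $|y-z|<2r$ ``lies in some ball $B_r(x')$''. (If $y_0=z_0$ there is no problem: use the center $y_0$.)

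For non-convex domains the geometry genuinely matters. Take two boundary points at distance exactly $2r$, with midpoint in $\Omega$, whose neighbourhoods in $\Omega$ point away from each other. No pair of points of $\Omega$ near them is admissible at radius $r$, yet such pairs are admissible at every radius $r'>r$.

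\textbf{How the paper closes it, and why this is valid here.} The paper asserts, without proof, that in \eqref{defHbracket} one may replace $B_r(x)\cap\Omega$ by $\overline B_r(x)\cap\overline\Omega$. The limit pair is then admissible at radius $r_0$ and the contradiction is immediate. This replacement holds for $r\le r_\Omega$, which suffices since $r_0\le r_\Omega$, and its proof uses the normalization \eqref{def_romega}.

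Let $x\in\overline\Omega$ and $y,z\in\overline B_r(x)\cap\overline\Omega$ with $y\ne z$.
\begin{itemize}
\item If $d(x)\ge r$, the points $x+(1-\epsilon)(y-x)$ and $x+(1-\epsilon)(z-x)$ lie in $B_r(x)\subset\Omega$.
\item If $d(x)<r$, pick $\xi\in\partial\Omega$ with $|x-\xi|<r$. Then $x,y,z\in B_{2r}(\xi)\subset\Sigma_0$, because $\bar\rho_\Omega\ge4r_\Omega$. There $\Omega=\{w_n>\varphi(w')\}$, and $|D\varphi|\le k_\Omega(3r)^{\bar\alpha}\le 1/40$ on the relevant region. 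Set $t=2\epsilon r$, $y_\epsilon=x+te_n+(1-\epsilon)(y-x)$ and $z_\epsilon=x+te_n+(1-\epsilon)(z-x)$. Then $x+te_n\in\Omega$, and $y_\epsilon,z_\epsilon\in\Omega\cap B_r(x+te_n)$ because $t>(1+1/40)\epsilon r$.
\end{itemize}
In both cases $y_\epsilon\to y$, $z_\epsilon\to z$ and $|y_\epsilon-z_\epsilon|=(1-\epsilon)|y-z|$. Continuity of $A$ and $b_1$ on $\overline\Omega$ then bounds the quotient at $(y,z)$ by the open-ball bracket at radius $r$.

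With this lemma applied at $r=r_0$, your compactness argument goes through.
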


\begin{rem} \label{r01M}
As mentioned above,  $M$ satisfies the upper bound \eqref{defMmajor}, so that 
all  results in Sections \ref{sec-intro}-\ref{sec-main} remain valid if $r_0$ is replaced by $1$ and $M$ by $M+1$.
Indeed, if $r_0\le 1$, then  $\|\cdot\|_{q,r_0,\Omega}\le \|\cdot\|_{q,1,\Omega}$
and $[\cdot]_{\alpha, r_0, \Omega}\le [\cdot]_{\alpha, 1, \Omega}$,
whereas if $r_0>1$, then $M<1$ owing to \eqref{relMr0}.
\end{rem}

We give a simple monotonicity property for general domains.

\begin{prop}\label{scaleinvtilde0}
Let $\omega\subset\Omega$ be bounded domains with $C^{1,\bar\alpha}$ boundaries,
$\mathcal{L}$ be an operator as in \eqref{defdiv} satisfying \eqref{hyp1}, \eqref{hyp2},
 and let $\theta\in(0,1]$. There exists $C(n,q,\alpha,\bar\alpha,\theta)>0$ such that,
 if $r_\Omega\ge\theta r_\omega$, then
$M(\ld,\omega)\le C(n,q,\alpha,\theta)M(\ld,\Omega)$.
\end{prop}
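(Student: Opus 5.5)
The plan is to compare the two radii $r_0(\ld,\omega)$ and $r_0(\ld,\Omega)$ from \eqref{defr0}, using Proposition~\ref{basicr0} and two elementary facts about the uniformly local quantities. Write $G_U(r)$ for the coefficient part of the bracket in \eqref{defr0} computed on a domain $U$ at radius $r$, that is,
$$G_U(r)=[A]^{\frac1\alpha}_{\alpha,r,U}+\|b_1\|_{L^\infty(U)}+[b_1]^{\frac1{1+\alpha}}_{\alpha,r,U}+\|b_2\|^{\beta_q}_{q,r,U}+\|c\|^{\gamma_q}_{q,r,U}.$$
Then $M(\ld,U)=G_U(r_0(\ld,U))$. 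The first fact is \emph{domain monotonicity}: since $\omega\subset\Omega$, we have $\omega\cap B_r(x)\subset\Omega\cap B_r(x)$ and the suprema in \eqref{deful}, \eqref{defHbracket} run over fewer points, so $G_\omega(r)\le G_\Omega(r)$ for every $r$. The second is \emph{radius monotonicity}: $G_U$ is nondecreasing in $r$. Abbreviate $r_0^\omega=r_0(\ld,\omega)$ and $r_0^\Omega=r_0(\ld,\Omega)$.

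I will split into three cases. \emph{Case 1: $r_0^\omega\le r_0^\Omega$.} Then $M(\ld,\omega)=G_\omega(r_0^\omega)\le G_\Omega(r_0^\Omega)=M(\ld,\Omega)$, and we are done with $C=1$.

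\emph{Case 2: $r_0^\omega> r_0^\Omega$ and $M(\ld,\Omega)\ge r_\Omega^{-1}$.} By \eqref{relMr0}, $M(\ld,\omega)\le (r_0^\omega)^{-1}<(r_0^\Omega)^{-1}=r_\Omega^{-1}+M(\ld,\Omega)\le 2M(\ld,\Omega)$.

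\emph{Case 3: $r_0^\omega> r_0^\Omega$ and $M(\ld,\Omega)< r_\Omega^{-1}$.} Here \eqref{relMr0} gives $r_0^\Omega\ge r_\Omega/2$. Combined with $r_0^\omega\le r_\omega\le r_\Omega/\theta$, this yields $r_0^\omega\le k\,r_0^\Omega$ with $k=2/\theta$. So
$$M(\ld,\omega)\le G_\Omega(k r_0^\Omega),$$
and it remains to show $G_\Omega(kr)\le C(n,q,\alpha,\theta)\,G_\Omega(r)$ for $r\le r_\Omega$ and the fixed $k$. This inequality is where the hypothesis $r_\Omega\ge\theta r_\omega$ enters.

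For the Lebesgue parts of $G_\Omega(kr)$ this is a covering argument. Any ball $B_{kr}(x)$ is covered by $N(n,k)$ balls of radius $r/2$. Each of these that meets $\Omega$ lies in some $B_r(y)$ with $y\in\Omega$. Hence $\|h\|_{q,kr,\Omega}\le N^{1/q}\|h\|_{q,r,\Omega}$, and the constant survives the powers $\beta_q,\gamma_q$. The $L^\infty$ term does not depend on the radius.

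The Hölder brackets are the main obstacle. For $y,z\in B_{kr}(x)\cap\Omega$ with $|y-z|\ge r$, the straight segment between them may leave $\Omega$, so a naive chaining fails. To handle this I will use the geometry encoded in $r_\Omega$: since $r\le r_\Omega$ and $k$ is fixed, the flattening maps in the definition of $\rho_\Omega$ (with $|D\Phi|,|D\Phi^{-1}|\le2$), together with the interior balls, show that any two such points can be joined by a path in $\Omega$ of length at most $C(n,\theta)\,r$. I will then split this path into at most $C(n,\theta)$ pieces, each with endpoints in a common ball $B_r(\cdot)\cap\Omega$. Summing the differences gives $|\psi(y)-\psi(z)|\le C\,[\psi]_{\alpha,r,\Omega}\,r^\alpha\le C[\psi]_{\alpha,r,\Omega}|y-z|^\alpha$. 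The case $|y-z|<r$ is immediate, since such a pair lies in $B_r(y)\cap\Omega$. This gives $[\psi]_{\alpha,kr,\Omega}\le C[\psi]_{\alpha,r,\Omega}$ for $\psi=A,b_1$, which closes Case~3 and completes the plan.
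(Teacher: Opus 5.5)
Your Cases 1 and 2, the two monotonicity facts, and the covering argument for the Lebesgue terms are correct. The gap is the H\"older-bracket step of Case 3, and it is more than a missing detail. The inequality you reduce to, $[\psi]_{\alpha,kr,\Omega}\le C(n,\theta)[\psi]_{\alpha,r,\Omega}$ for $r\le r_\Omega$ and $k=2/\theta$, is false in general. The data $\rho_\Omega,\bar\rho_\Omega,k_\Omega$ defining $r_\Omega$ only control $\Omega$ in balls of radius $O(r_\Omega)$. They say nothing about $B_{kr}(x)\cap\Omega$ when $kr\gg r_\Omega$, which is exactly the small-$\theta$ regime.

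For a counterexample, let $\Omega\subset\mathbb{R}^2$ be a smooth horseshoe: two straight arms of width $w$ and length $L$, separated by a slit of width $1$, joined by a bend whose inner boundary is a half-circle of radius $1/2$. Then $r_\Omega$ lies between two absolute constants, independently of $w$ and $L$. Let $\psi$ increase from $0$ to $1$ along the horseshoe with $|\nabla\psi|\le C/L$. Then $[\psi]_{\alpha,r_\Omega,\Omega}\le C/L$. On the other hand, two points facing each other across the slit in the middle of the arms give $[\psi]_{\alpha,r,\Omega}\ge c$ for all $r\ge 2$.

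Now take $A=(1+\psi)I$ with no lower-order terms, let $\omega$ be a ball of radius $w/3$ inside one arm ($w$ large), and set $\theta=r_\Omega/r_\omega$. You are then in Case 3 with $r_0^\omega\approx r_\omega\ge 2$. Hence $G_\Omega(r_0^\omega)\ge c$, whereas $M(\ld,\Omega)\to 0$ as $L\to\infty$ at fixed $\theta$. So the path of length $C(n,\theta)r$ you invoke does not exist. Your chain $M(\ld,\omega)\le G_\Omega(kr_0^\Omega)\le C\,G_\Omega(r_0^\Omega)$ breaks at the second step, even though the proposition itself holds in this example.

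The first reduction, $G_\omega(r_0^\omega)\le G_\Omega(r_0^\omega)$, throws away the geometry of $\omega$, and that geometry is exactly what the brackets need. So keep $\omega$ until after the chaining. Since $r_0^\omega\le r_\omega$, for $x\in\overline\omega$ there are two cases.
\begin{itemize}
\item[(a)] Either $B_{r_0^\omega}(x)\subset\omega$, and you use the segment.
\item[(b)] Or $x$ lies within $r_0^\omega$ of some $\xi\in\partial\omega$. In the cylinder at $\xi$ of size $\bar\rho_\omega\ge 4r_\omega$, the set $\omega$ lies above a graph with $|\varphi(y')|\le k_\omega|y'|^{1+\bar\alpha}\le r_0^\omega/30$ for $|y'|\le 2r_0^\omega$. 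Going up, across and down then joins any $y,z\in B_{r_0^\omega}(x)\cap\omega$ by a path in $\omega\subset\Omega$ of length at most $11\,r_0^\omega$.
\end{itemize}
Cut this path into at most $C(n)\,r_0^\omega/r_0^\Omega+1\le C(n)\theta^{-1}$ pieces of length $<r_0^\Omega$, each contained in some $B_{r_0^\Omega}(p)\cap\Omega$ with $p\in\Omega$. This gives $|\psi(y)-\psi(z)|\le C(n)\theta^{-1}[\psi]_{\alpha,r_0^\Omega,\Omega}|y-z|^\alpha$ whenever $|y-z|\ge r_0^\Omega$; the other case is immediate. Hence $[\psi]_{\alpha,r_0^\omega,\omega}\le C(n)\theta^{-1}[\psi]_{\alpha,r_0^\Omega,\Omega}$ for $\psi=A,b_1$, which closes Case 3.

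This matches the structure of the paper's proof. It splits according to whether $r_0(\ld,\omega)\ge\theta^{-1}r_0(\ld,\Omega)$, in which case \eqref{relMr0} gives $M(\ld,\omega)\le\theta M(\ld,\Omega)$, or the reverse, in which case a ball of radius $r_0(\ld,\omega)$ is covered by $C(n)\theta^{-n}$ balls of radius $r_0(\ld,\Omega)$. In both cases it compares $\omega$ at its own radius directly with $\Omega$ at its own radius. Its one-line covering remark does not spell out the brackets either; the chaining inside $\omega$ just described is what makes that step work.
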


We now consider the case of balls.
It is immediate that, if $u$ is a solution of $\mathcal{L}u=f$ in $\Omega=B_R$, then $\tilde u(x)=u(Rx)$ is a solution of
$\tilde{\mathcal{L}}u=\tilde f$ in $B_1$,
where $\tilde f$ and the  coefficients 
of $\tilde{\mathcal{L}}$ are given by
\begin{equation}\label{scaledcoeff}
\tilde A(x)=A(Rx),\quad \tilde b_i(x)=Rb_i(Rx),\quad \tilde c(x)=R^2c(Rx),\quad \tilde f(x)=R^2f(Rx).
\end{equation}
In the next proposition, after giving a basic monotonicity property of the quantity $M$ with respect to $R$,
we show that
$r_0, M$, as well as each of the (uniformly local)
norms and seminorms appearing in $M$
enjoy some natural invariance properties with respect to the scaling transformation \eqref{scaledcoeff}.
As a consequence, all main estimates in Section~\ref{sec-main} in the case $\Omega=B_R$
are scale-invariant with respect to $R$, and
in particular the general case $R>0$ can be reduced to the case $R=1$.

\begin{prop}\label{scaleinvtilde}
Let $R>0$ and assume \eqref{hyp1}-\eqref{hyp2} with $\Omega=B_R$.
\smallskip

{(i) We have $M(\mathcal{L},B_\rho)\le M(\mathcal{L},B_R)$ for all $\rho\in(0,R)$.
\smallskip

(ii) Let $f\in L^q(B_R)$} and let $\tilde f$ and the coefficients of the operator $\tilde{\mathcal{L}}$ be given by \eqref{scaledcoeff}.
Let $\tilde r_0$ be defined by \eqref{defr0} with $\mathcal{L},B_R$
replaced by $\tilde{\mathcal{L}},B_1$.
Then
\begin{equation}\label{scaleinvtilde2}
\tilde r_0=r_0/R,
\end{equation}
\begin{equation}\label{scaleinvtilde2b}
[\tilde A]_{\alpha, \tilde r_0, B_1}= R^{\alpha}[A]_{\alpha, r_0, B_R},\,
[\tilde b_1]_{\alpha, \tilde r_0, B_1}= R^{1+\alpha}[b_1]_{\alpha, r_0, B_R},\,
 \|\tilde b_1\|_{L^\infty(B_1)}=R\|b_1\|_{L^\infty(B_R)},
\end{equation}
\begin{equation}\label{scaleinvtilde4}
\|\tilde b_2\|_{q,\tilde r_0,B_1}=R^{1-\frac{n}{q}}\|b_2\|_{q,r_0,B_R},\
\|\tilde c\|_{q,\tilde r_0,B_1}=R^{2-\frac{n}{q}}\|c\|_{q,r_0,B_R},\
\|\tilde f\|_{q,\tilde r_0,B_1}=R^{2-\frac{n}{q}}\|f\|_{q,r_0,B_R}.
\end{equation}
Consequently,
\begin{equation}\label{scaleinvtilde5}
M(\tilde{\mathcal{L}},B_1)=RM(\mathcal{L},B_R).
\end{equation}
\end{prop}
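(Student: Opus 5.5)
The statement to prove is Proposition~\ref{scaleinvtilde}. Part (ii) is a direct computation with the change of variables $y=Rx$, and I would do it first; part (i) is a monotonicity argument based on Proposition~\ref{basicr0}.

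\emph{Part (ii).} First I check how each seminorm and norm appearing in \eqref{defr0} transforms under \eqref{scaledcoeff}, for an arbitrary radius $r$.

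For $x\in B_1$ we have $B_1\cap B_r(x)=R^{-1}\bigl(B_R\cap B_{Rr}(Rx)\bigr)$. A change of variables then gives
\[
\|\tilde h\|_{L^q(B_1\cap B_r(x))}=R^{\kappa-\frac nq}\|h\|_{L^q(B_R\cap B_{Rr}(Rx))}
\quad\text{whenever } \tilde h(x)=R^\kappa h(Rx).
\]
Taking the supremum over $x$ yields $\|\tilde h\|_{q,r,B_1}=R^{\kappa-n/q}\|h\|_{q,Rr,B_R}$.

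Similarly, $|y-z|^{-\alpha}$ scales by $R^{\alpha}$, so
\[
[\tilde A]_{\alpha,r,B_1}=R^\alpha[A]_{\alpha,Rr,B_R},\qquad [\tilde b_1]_{\alpha,r,B_1}=R^{1+\alpha}[b_1]_{\alpha,Rr,B_R},
\]
and obviously $\|\tilde b_1\|_{L^\infty(B_1)}=R\|b_1\|_{L^\infty(B_R)}$.

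Now raise each quantity to its exponent: $1/\alpha$, $1/(1+\alpha)$, $1$, $\beta_q$, $\gamma_q$. Since $(1-n/q)\beta_q=1$ and $(2-n/q)\gamma_q=1$, every term acquires exactly the factor $R$. Also $r_{B_1}=c(n)=r_{B_R}/R$. Writing $\Phi_\Omega(r)$ for the parenthesis in \eqref{defr0}, these identities combine to
\[
r\,\Phi_{B_1}(r)=(Rr)\,\Phi_{B_R}(Rr).
\]
Hence the admissible sets in \eqref{defr0} correspond under $r\mapsto Rr$, and $\tilde r_0=r_0/R$, which is \eqref{scaleinvtilde2}.

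Substituting $r=\tilde r_0$ into the scaling identities gives \eqref{scaleinvtilde2b} and \eqref{scaleinvtilde4}; the $f$-identity is the case $\kappa=2$. Summing the terms of \eqref{defM} with their exponents gives \eqref{scaleinvtilde5}.

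\emph{Part (i).} Let $\rho<R$. Every uniformly local quantity on $B_\rho$ at a radius $r$ is dominated by the corresponding one on $B_R$, since $B_\rho\cap B_r(x)\subset B_R\cap B_r(x)$ and the suprema run over smaller sets. The sup norms behave the same way. In addition, $r_{B_\rho}^{-1}=r_{B_R}^{-1}R/\rho$, which is larger, so the set of admissible $r$ in \eqref{defr0} could shrink. This is the delicate point: $r_0(B_\rho)$ and $r_0(B_R)$ are not obviously ordered. The $r_\Omega^{-1}$ term makes the $B_\rho$ function larger, while the coefficient terms make it smaller.

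Here I would use the structure behind Proposition~\ref{basicr0}. Set $G_\Omega(r)=\Phi_\Omega(r)-r_\Omega^{-1}$, the coefficient part, which is nondecreasing in $r$; by \eqref{relMr0}, $M(\Omega)=G_\Omega(r_0(\Omega))$.

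Case 1: $r_0(B_\rho)\le r_0(B_R)$. Then
\[
M(B_\rho)=G_{B_\rho}(r_0(B_\rho))\le G_{B_R}(r_0(B_\rho))\le G_{B_R}(r_0(B_R))=M(B_R),
\]
using the domination of the $B_\rho$ quantities by the $B_R$ ones and then monotonicity of $G_{B_R}$.

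Case 2: $r_0(B_\rho)>r_0(B_R)$. Then \eqref{relMr0} gives
\[
M(B_\rho)=r_0(B_\rho)^{-1}-r_{B_\rho}^{-1}<r_0(B_R)^{-1}-r_{B_R}^{-1}=M(B_R),
\]
because $r_{B_\rho}^{-1}\ge r_{B_R}^{-1}$.

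The only care needed is continuity and monotonicity of $G$ in $r$, so that $M=G(r_0)$ holds at the supremum in \eqref{defr0}. I take this from the proof of Proposition~\ref{basicr0}, and I expect it to be the main technical point.
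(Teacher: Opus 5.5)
Your proof is correct, and for part (ii) it is essentially the paper's argument: you prove the scaling identities at an arbitrary radius $r$, observe that $r\,\Phi_{B_1}(r)=(Rr)\,\Phi_{B_R}(Rr)$ so the admissible sets in \eqref{defr0} correspond, and then evaluate at $\tilde r_0=r_0/R$. The paper does not write out part (i), but your two-case argument is the $\theta=1$ case of its proof of Proposition~\ref{scaleinvtilde0}, with monotonicity in $r$ replacing the covering step; just note that $M=G_\Omega(r_0)$ is simply the definition \eqref{defM}, and what you actually need from Proposition~\ref{basicr0} (where continuity enters) is \eqref{relMr0}, which you use in Case 2.
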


We next
consider the properties of the quantity $\|\cdot\|_{q,r_0,B_R}$
(for $q\in(n,\infty)$)
when applied to the coefficients of the operators themselves:
in this situation it does not obey the scaling of a norm
but a {\it nonlinear} scaling (due to the dependence of $r_0$ on the coefficients of the operators).
Here we restrict to a smaller class of operators, for simplicity and since it will be enough for our purposes.

\begin{prop} \label{lemr0ul}
Let $R>0$ and $b, c\in L^\infty(B_R)$.
\smallskip

(i) Let $\mathcal{L}=\Delta+b(x)\cdot\nabla +c(x)$.
Then we have
\be\label{culinfty}
\|b\|_{q,r_0,B_R} \le C(n) \|b\|^{1-n/q}_{L^\infty(B_R)}\qquad\mbox{and}\qquad
\|c\|_{q,r_0,B_R} \le C(n) \|c\|^{1-n/2q}_{L^\infty(B_R)}.
\ee

(ii) Consider the family of operators
$\mathcal{L}_\lambda=\Delta+\lambda b(x)\cdot\nabla$
(resp., $\Delta+\lambda c(x)$) for $\lambda>0$,
and
set $r_\lambda:=r_0(\mathcal{L}_\lambda,B_R)$ {\rm(}cf.~\eqref{defr0}{\rm)}.
Then we have
\be\label{culinfty2}
C_1(n)\lambda^{1-n/q} \|b\|_{q,r_1,B_R}\le \|\lambda b\|_{q,r_\lambda,B_R} \le
C_2(n) \lambda^{1-n/q}\|b\|^{1-n/q}_{L^\infty(B_R)} ,\quad \lambda\ge 1
\ee
$$({\rm resp.,}\quad C_1(n)\lambda^{1-n/2q} \|c\|_{q,{r_1},B_R}
\le \|\lambda c\|_{q,{r_\lambda},B_R} \le
C_2(n) \lambda^{1-n/2q}\|c\|^{1-n/2q}_{L^\infty(B_R)} ,\quad \lambda\ge 1).$$
Moreover the function $\lambda\mapsto
\|\lambda b\|_{q,r_\lambda,B_R}$
is continuous on $(0,\infty)$.
\end{prop}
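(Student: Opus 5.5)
The plan is to work directly from the definition \eqref{defr0} of $r_0$ and the identity of Proposition~\ref{basicr0}, combined with two elementary comparisons for uniformly local norms: an upper bound by the $L^\infty$ norm, and a covering argument between two radii. Here $\Omega=B_R$, $A=I$, $b_1=0$, $b_2=b$, so the quantity in parentheses in \eqref{defr0} reduces to $r_\Omega^{-1}+\|b\|^{\beta_q}_{q,r,B_R}+\|c\|^{\gamma_q}_{q,r,B_R}$. Throughout I write $\beta=\beta_q$ and $\gamma=\gamma_q$.

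For (i), Proposition~\ref{basicr0} gives $r_0^{-1}=r_\Omega^{-1}+M\ge M\ge \|b\|^{\beta}_{q,r_0,B_R}$, so that
$$r_0\le \|b\|^{-\beta}_{q,r_0,B_R}.$$
On the other hand, trivially
$$\|b\|_{q,r_0,B_R}\le |B_{r_0}|^{1/q}\|b\|_\infty = C(n)\,r_0^{n/q}\|b\|_\infty.$$
Combining the two, $\|b\|^{1+\beta n/q}_{q,r_0,B_R}\le C\|b\|_\infty$. Since $1+\beta n/q=\beta=(1-n/q)^{-1}$, this gives $\|b\|_{q,r_0,B_R}\le C\|b\|_\infty^{1-n/q}$. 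The estimate for $c$ is identical. Now $1+\gamma n/q=2\gamma=2/(2-n/q)$, so $\|c\|^{2\gamma}_{q,r_0,B_R}\le C\|c\|_\infty$, which is the second inequality in \eqref{culinfty}.

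For (ii), the upper bound in \eqref{culinfty2} is just (i) applied to $\lambda b$. For the lower bound, set $Y=\|b\|_{q,r_1,B_R}$ and $X=\|\lambda b\|_{q,r_\lambda,B_R}$.

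First, for $\lambda\ge1$ the function in \eqref{defr0} increases with $\lambda$, so $r_\lambda\le r_1\le r_\Omega$.

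Second, a covering argument applies: each $B_{r_1}(x)\cap B_R$ is covered by at most $C(n)(r_1/r_\lambda)^n$ balls of radius $r_\lambda$. This yields
$$Y\le C(r_1/r_\lambda)^{n/q}\|b\|_{q,r_\lambda,B_R}, \qquad\text{i.e.}\qquad X\ge c\,\lambda\,(r_\lambda/r_1)^{n/q}\,Y.$$

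Third, I split into two cases.
\begin{itemize}
\item If $r_\lambda\ge r_1/(2\lambda)$, the covering bound immediately gives $X\ge c\lambda^{1-n/q}Y$.
\item If $r_\lambda<r_1/(2\lambda)\le r_\Omega/2$, I use the equality $r_\lambda^{-1}=r_\Omega^{-1}+X^{\beta}$ (Proposition~\ref{basicr0}) together with $r_1^{-1}\ge Y^\beta$. These give $X^\beta\ge r_\lambda^{-1}/2\ge \lambda r_1^{-1}\ge\lambda Y^\beta$. Hence $X\ge\lambda^{1/\beta}Y=\lambda^{1-n/q}Y$.
\end{itemize}
The case of $c$ is the same with $\gamma$ in place of $\beta$, since $\lambda^{1/(2\gamma)}$ is replaced appropriately. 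Precisely, $X^{\gamma}$ with $X=\lambda\|c\|$ leads to $X\ge \lambda^{1/\gamma}$-type bounds. I will check that the exponent comes out as $1-n/2q$ after accounting for $c$ scaling like $r^{-2}$ in the covering case. In the first case I should then use $r_\lambda\ge r_1/(2\lambda^{1/2})$ as the threshold.

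Continuity of $\lambda\mapsto\|\lambda b\|_{q,r_\lambda,B_R}$ follows from two facts. First, $\rho\mapsto\|b\|_{q,\rho,B_R}$ is continuous for bounded $b$ (absolute continuity of the integral, uniformly in the center). Second, $r_\lambda$ is the unique crossing point of the continuous, nondecreasing-in-$r$ function $r\bigl(r_\Omega^{-1}+\lambda^\beta\|b\|^\beta_{q,r,B_R}\bigr)$ with $1$, and this function is strictly increasing in $r$ and jointly continuous in $(r,\lambda)$.

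The main obstacle is the lower bound in (ii). A naive comparison only gives the weaker exponent $1-n/q-\beta n/q$, so the case distinction exploiting the exact identity of Proposition~\ref{basicr0} is essential there.
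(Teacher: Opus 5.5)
Your proof is correct. Part (i), the upper bound in (ii), and the covering inequality $K_1\le C(n)(r_1/r_\lambda)^{n/q}\lambda^{-1}K_\lambda$ match the paper; the lower bound in (ii) and the continuity are obtained by a different route.

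\textbf{Lower bound.} The paper does not split into cases. It compares the two identities $r_\lambda(r_{B_R}^{-1}+K_\lambda^{\beta_q})=1=r_1(r_{B_R}^{-1}+K_1^{\beta_q})$ and uses $r_\lambda\le r_1$ to get the single inequality $r_\lambda K_\lambda^{\beta_q}\ge r_1K_1^{\beta_q}$. This is an upper bound on $r_1/r_\lambda$ in terms of $K_\lambda/K_1$, which it substitutes into the covering bound. The exponent then follows from $1+n\beta_q/q=\beta_q$ (resp.\ $1+n\gamma_q/q=2\gamma_q$), so $b$ and $c$ are handled by the same computation.

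Your version balances two regimes instead. The cost is that the threshold must be tuned to the coefficient: $r_1/(2\lambda)$ for $b$, but $r_1/(2\sqrt\lambda)$ for $c$. You chose the right threshold for $c$, but your remark about ``$\lambda^{1/\gamma}$-type bounds'' is off and should be replaced by the actual computation. With $r_\lambda<r_1/(2\sqrt\lambda)\le r_{B_R}/2$ one has $X^{\gamma_q}=r_\lambda^{-1}-r_{B_R}^{-1}\ge r_\lambda^{-1}/2\ge\sqrt\lambda\,r_1^{-1}\ge\sqrt\lambda\,Y^{\gamma_q}$. Hence $X\ge\lambda^{1/(2\gamma_q)}Y=\lambda^{1-n/(2q)}Y$, and the covering regime gives the same exponent. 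The gain of your approach is transparency: the exponent appears as the balance point of the two regimes, and in the small-$r_\lambda$ regime you get constant $1$ with no covering.

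\textbf{Continuity.} The paper only uses that $r_\lambda$ and $H(\lambda)$ are nonincreasing with continuous product $\lambda^{-\beta_q}$. You use continuity of $\rho\mapsto\|b\|_{q,\rho,B_R}$ (easy for bounded $b$) together with continuous dependence of the unique root of a jointly continuous, strictly increasing function. Both are fine. Yours can be shortened by noting that $K_\lambda^{\beta_q}=r_\lambda^{-1}-r_{B_R}^{-1}$, so continuity of $K_\lambda$ is equivalent to continuity of $r_\lambda$.
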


\section{Proof of the Morel-Oswald estimate (Theorems~\ref{OptimizedHopf} and \ref{thm4gen}(i))}
\label{proofHopf}

Let $\Omega$ be a domain with $C^{1,\bar\alpha}$ boundary and $\tilde \ld$ be an operator of the form \eqref{defdiv} defined in $\Omega$, with coefficients $\tilde A$, $\tilde b_i$, $\tilde c$ satisfying \eqref{hyp1}-\eqref{hyp2}. For  $B_r=B_r(x_0)$, $x_0\in \partial\Omega$, we denote $B_r^+=B_r\cap\Omega$, $B_r^0=B_r\cap\partial\Omega$.
 In view of the proof of Theorem~\ref{thm4gen}(i), we prepare
the following lemma, which provides quantitative interior and boundary Morel-Oswald type estimates
for normalized domains and  coefficients with small Lebesgue norms.

\begin{lem} \label{LemOptimizedHopf2}
 Assume that either (i) $\omega= B_2\subset\Omega$ or (ii) $\omega$ is a $C^{1,\bar\alpha}$ domain  such that   $B_1^+\subset\omega\subset B_2^+$ and the $C^{1,\bar\alpha}$ norm of $\partial\omega$ is bounded by $C=C(n,\bar\alpha)>0$. There exist constants $\ep_0, c_0>0$ depending on $n,\lambda,\Lambda, q, \alpha, \bar\alpha$, such that if
\be\label{hyplambda1b}
[\tilde A]_{\alpha,\omega}+\|\tilde b_1\|_{C^\alpha(\omega)}\le1, \qquad  \|\tilde b_1\|_{L^q(\omega)}
+ \|\tilde b_2\|_{L^q(\omega)}+\|\tilde c\|_{L^{q}(\omega)}\le \ep_0,
\ee
and $v\ge0$ is a weak supersolution of $-\tilde{\mathcal{L}}v\ge g \ge0$ in $\omega$ for some $g\in L^q(\omega)$, then
$$\inf_{\omega} \frac{v}{d}\ge c_0 \int_{\omega}g d,\qquad d(x)=\mathrm{dist}(x,\partial\omega).$$
\end{lem}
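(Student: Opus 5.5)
The plan follows the Brezis--Cabr\'e strategy. We compare $v$ with the solution $w$ of the Dirichlet problem $-\tilde{\mathcal L}w=g$ in $\omega$, $w=0$ on $\partial\omega$, and then obtain a lower bound for $w/d$ through the Green function.

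First, smallness \eqref{hyplambda1b} with $\ep_0$ small gives the maximum principle and unique solvability of the Dirichlet problem for $\tilde{\mathcal L}$ in $\omega$. Indeed, the lower order terms are perturbations of $\mathrm{div}(\tilde A\nabla\cdot)$ in $H^1_0(\omega)$. By Sobolev and H\"older, $\|\tilde b_i\|_{L^q}$ and $\|\tilde c\|_{L^q}$ with $q>n$ on a domain of bounded measure give a coercive bilinear form once $\ep_0$ is small. Hence $w\in H^1_0(\omega)$ exists, and $v\ge w$ by the weak maximum principle, since $v\ge 0=w$ on $\partial\omega$ and $-\tilde{\mathcal L}(v-w)\ge0$. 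It therefore suffices to prove $w(x)\ge c_0 d(x)\int_\omega g\,d$.

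Next, write $w(x)=\int_\omega G(x,y)g(y)\,dy$ with $G$ the Green function of $\tilde{\mathcal L}$ in $\omega$. The target is the two-sided distance bound
\[
G(x,y)\ge c\, d(x)\,d(y),\qquad x,y\in\omega,
\]
which immediately yields the claim. To avoid building Green functions for general operators, I would argue by duality and decomposition as in \cite{BrC}.

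Fix a small interior ball $B=B_{\rho}(x_1)$ with $\rho$ universal; this is possible in both cases (i) and (ii). Split $g=g\chi_{\omega\setminus B}+g\chi_B$ and treat the two parts separately.

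\emph{The part away from $B$.} Let $\zeta\ge0$ be the solution of the adjoint problem $-\tilde{\mathcal L}^*\zeta=\chi_B$, $\zeta=0$ on $\partial\omega$. The adjoint has the same structure, with $b_1$ and $b_2$ exchanged, so its divergence drift is in $L^q$ with small norm. The boundary Hopf lemma for the adjoint, in its quantitative global form via Theorem~\ref{BHIoptim} with normalized constants, gives $\zeta\ge c\,d$ in $\omega$. Integrating by parts,
\[
\int_B w=\int_\omega g\zeta\ge c\int_\omega g\,d .
\]
Since $w\ge0$ is a supersolution in $\omega$, the interior weak Harnack inequality gives $\inf_{B'} w\ge c\int_B w$ for a slightly larger ball $B'$. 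Then $w\ge c\,d\,\inf_{B'}w$ on all of $\omega$. This follows from Theorem~\ref{BHIoptim} applied to $w$, or directly from comparing, in $\omega\setminus \overline{B'}$, $w$ with $\inf_{B'}w$ times the barrier solving $\tilde{\mathcal L}\psi=0$, $\psi=1$ on $\partial B'$, $\psi=0$ on $\partial\omega$. That barrier satisfies $\psi\ge c\,d$ by the Hopf--Oleinik lemma for our normalized class, which holds uniformly thanks to the $C^\alpha$ bound on $\tilde A,\tilde b_1$, the $C^{1,\bar\alpha}$ bound on $\partial\omega$, and compactness. The contribution of $g$ near $B$, where $d$ is comparable to $1$, is handled by the same duality once we use $\int_\omega g\zeta$ with $\zeta\ge c$ on $B$.

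The main obstacle is the uniformity of the constants: the boundary Hopf estimate $\zeta\ge c\,d$ and $\psi\ge c\,d$ must hold with constants depending only on the normalized data. I would obtain this from Theorem~\ref{BHIoptim}, where $M$ and $r_\Omega^{-1}$ are bounded in the normalized setting, giving $\inf\zeta/d\ge e^{-C}\|\zeta/d\|_{L^\epsilon}$. I would then bound $\|\zeta/d\|_{L^\epsilon}$ from below via $\int_B\zeta$, which is tested against a fixed function. A compactness-contradiction argument is the fallback if this explicit route fails.
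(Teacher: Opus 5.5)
Your overall architecture is the same as the paper's Steps 4--6: replacing $v$ by the Dirichlet solution, duality with the solution $\zeta$ of $-\tilde{\mathcal L}^*\zeta=\chi_B$, interior weak Harnack, and propagation to $w\ge c\,d\inf_{B'}w$. Those parts are fine; in particular Theorem~\ref{BHIoptim} is legitimately available for $\tilde{\mathcal L}$ itself.

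The gap is exactly the step you flag as the main obstacle, $\zeta\ge c\,d$, and your justification for it does not hold. Theorem~\ref{BHIoptim}, like Theorem~\ref{fromGSS} on which it rests, needs the divergence-form drift to be H\"older continuous, and so does any Hopf--Oleinik lemma for the class \eqref{hyp1}--\eqref{hyp2}. In $\tilde{\mathcal L}^*v=\mathrm{div}(\tilde A^T\nabla v-\tilde b_2v)-\tilde b_1\cdot\nabla v+\tilde cv$ that drift is $-\tilde b_2$, which is only in $L^q$. Smallness of $\|\tilde b_2\|_{L^q}$ does not put $\tilde{\mathcal L}^*$ into the class \eqref{hyp2}. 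Solutions of the adjoint are in general only $W^{1,q}$, hence $C^{1-n/q}$ and not $C^1$, up to $\partial\omega$. So neither the global weak Harnack inequality for $\zeta/d$ nor a uniform boundary point lemma can be quoted for $\zeta$.

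Your compactness fallback hits the same wall, at least in its naive form. The deviation of $\zeta$ from the solution of a limiting pure-divergence problem is driven by $\mathrm{div}(\tilde b_2\zeta)$ with $\tilde b_2\zeta$ small in $L^q$. This bounds $|\zeta-\zeta_\infty|$ only by $C\ep_0\,d^{1-n/q}$, which does not stay below $c\,d$ near $\partial\omega$.

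The missing idea is the multiplicative renormalization in the paper's Step 3.
\begin{itemize}
\item Using the smallness of $\|\tilde b_2\|_{L^q}$ and $\|\tilde c\|_{L^q}$ (via \cite[Theorem 8.16]{GT} and Morrey's $W^{1,q}$ estimate), one builds $\phi=1-\psi$ with $\tilde{\mathcal L}^*\phi=0$, $1/2\le\phi\le3/2$ and $[\phi]_{\alpha,\omega}+\|\nabla\phi\|_{L^q(\omega)}\le C\ep_0$.
\item Then $w_1=\zeta/\phi$ solves
\[
-\mathrm{div}\bigl(\phi\tilde A^T\nabla w_1\bigr)-\bigl(\tilde A^T\nabla\phi-(\tilde b_1+\tilde b_2)\phi\bigr)\cdot\nabla w_1=\chi_B ,
\]
because the new divergence drift and potential combine into $\tilde{\mathcal L}^*\phi=0$.
\item This operator has a $C^\alpha$ leading coefficient and only a small $L^q$ non-divergence drift. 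Hence $w_1\ge c\,d$: the paper proves this explicitly for $\mathrm{div}(A\nabla\cdot)$ and then perturbs in $C^1$.
\item It follows that $\zeta\ge (c/2)d$.
\end{itemize}
This is where $\ep_0$ is genuinely needed.

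Your lower bound for $\int\zeta$ near $B$, obtained by testing ``against a fixed function'', is also only sketched. Since $\tilde A$ is merely $C^\alpha$, $\tilde{\mathcal L}\eta$ is not a function for smooth $\eta$. The paper instead tests, for the pure divergence operator, against the first eigenfunction of the dual operator on $B_{2\rho}$, using that the boundary term has a favourable sign.
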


\begin{proof}[Proof of Lemma~\ref{LemOptimizedHopf2}] We will give the proof of statement (ii) (the proof of the interior statement (i) is simpler and goes the same way).

We  can assume that $\lambda_1(-\tilde{\mathcal{L}},\omega)>0$ and that $v>0$ is a solution of $-\tilde{\mathcal{L}}v= g$ in $\omega$ with $v=0$ on $\partial \omega$, and so $v\in C^{1,\alpha}(\overline{\omega})$. Indeed, the positivity of the first eigenvalue follows from Proposition \ref{lowerbdeig} below and an appropriate choice of $\ep_0$, and  then  it is sufficient to replace $v$ by the solution of the Dirichlet problem $-\tilde{\mathcal{L}}\hat v= g$ in $\omega$ with $\hat v=0$ on $\partial \omega$, and to observe that $v\ge \hat v$ by the maximum principle (the maximum principle and the solvability of the Dirichlet problem are available thanks to $\lambda_1(-\tilde{\mathcal{L}},\omega)>0$, see the appendix).

It is easy to see that  there is a universal $\rho>0$ and a point $\xi$ such that the ball $B_{4\rho}(\xi)\subset B_1^+$ (for instance, in the coordinate system $y=(y^\prime,y_n)$ used in the beginning of Section \ref{sec-main}, we can take $\xi=(0,\rho_\Omega/2)$, since by \eqref{def_parab}-\eqref{def_romega} we have $\{y\::\: |y^\prime|<y_n, \rho_\Omega/4<y_n<3\rho_\Omega/4\}\subset\Omega$).  Set $B_\rho=B_\rho(\xi)$.

In the following we will use the {\it dual operator} of $\ld$
$$
 \ld^*v
= \mathrm{div}(\tilde A^T \nabla v - \tilde b_2v) - \tilde b_1 \nabla v + \tilde cv,
$$
such that for each $u,v\in H^1_0(\Omega)$ we have $
\int_\Omega (-\ldu)v = \int_\Omega u(-\ld^*v)$  
 in the weak sense.

By \eqref{hyplambda1b} and Proposition~\ref{lowerbdeig}  we also  know that we can choose $\epsilon_0$ such that
\be\label{upperlam2}
0<\lambda_1(-\tilde{\mathcal{L}^*},\omega)=\lambda_1(-\tilde{\mathcal{L}},\omega)\le \lambda_1(-\tilde{\mathcal{L}},B_{2\rho})\le C_0.\ee

{\bf Step 1.}
Set $\ld_0u= \mathrm{div}(A(x)\nabla u)$, with $\|A\|_{C^\alpha(\omega)}\le \Lambda+1$. There exists $c_0>0$ depending on $n,\lambda,\Lambda,\alpha, \bar\alpha$ for which the solution of the auxiliary problem
$$
\left\{\hskip 2mm\begin{aligned}
-\ld_0w_0&=\chi_{B_\rho} &\quad&x\in \omega,\\
w_0&=0, &\quad&x\in \partial \omega,
\end{aligned}\right.
\qquad
\mbox{is such that}\qquad
\inf_{\omega} \frac{w_0}{d} \ge c_0.
$$

\noindent{\it Proof}. We  consider the first eigenvalue and eigenfunction  $\lambda_1,\varphi_1>0$, $\varphi_1\in C^{1,\alpha}(\overline{\omega})$,
of $\ld_0^*= \mathrm{div}(A^TD\cdot)$ in $B_{2\rho}$,
normalized by $\max_{B_{2\rho}}\varphi_1=1$. Clearly $0<\lambda_1\le C_0$.
By the global Harnack inequality (cf.~Theorem \ref{BHIoptim}) we have
$\inf_{B_{2\rho}}\frac{\varphi_1}{d_0}\ge c_0 \sup_{B_{2\rho}}\frac{\varphi_1}{d_0}\ge c_0 \sup_{B_{2\rho}}\varphi_1$,
where $d_0(x)={\rm dist}(x,\partial B_{2\rho})$, hence in particular
$$
\inf_{B_{\rho}}\varphi_1\ge c_0.
$$

Since $-\ld_0w_0 \ge 0$, $w_0>0$ in $\omega$,   by  the global weak Harnack inequality \eqref{sharpWBHI}
\be\label{intharn1}\inf_{\omega} \frac{w_0}{d}\ge c_0 \Bigl\| \frac{w_0}{d}\Bigr\|_{L^\eps(\omega)}\ge c_0 \Bigl\|
\frac{w_0}{d}\Bigr\|_{L^\eps(B_{2\rho})}\ge c_0 \|{w_0}\|_{L^\eps(B_{2\rho})}\ge c_0 \inf_{ B_{2\rho}}w_0, 
\ee
whereas by the interior weak Harnack inequality
\cite[Theorem 8.18]{GT},
\be\label{intharn2}
\inf_{B_{2\rho}}w_0\ge c_0 \int_{B_{2\rho}} w_0,
\ee
and by using \eqref{upperlam2} and standard integration by parts
\be\label{HopfIPP}
\begin{aligned}
\int_{B_{2\rho}} w_0
&\ge \int_{B_{2\rho}} w_0\varphi_1= \frac{1}{\lambda_1} \int_{B_{2\rho}} w_0(-\ld_0^*\varphi_1)\\
&= \frac{1}{\lambda_1} \int_{B_{2\rho}} (-\ld_0w_0)\varphi_1 -\frac{1}{\lambda_1}\int_{\partial B_{2\rho}}  \nu\cdot A(x) w_0
\nabla\varphi_1\, d\sigma\\
&\ge c_0 \int_{B_{2\rho}} \chi_{B_\rho}\varphi_1 = c_0 \int_{B_{\rho}} \varphi_1 \ge  c_0 \inf_{B_{\rho}}\varphi_1\ge c_0,
\end{aligned}
\ee
since $\nabla\varphi_1=(\partial_\nu \varphi_1)\nu$ and $\partial_\nu\varphi_1\le 0$, $\nu\cdot A(x)\nu\ge 0$ on $\partial B_{2\rho}$.
Combining this with \eqref{intharn1}-\eqref{intharn2} gives the claim.

\smallskip

{\bf Step 2.}
Set $\ld_1u= \mathrm{div}(A(x)\nabla u)+b(x)\cdot \nabla u$, 
$b\in L^q(\omega)$.  There exist constants $c_1>0$, $\ep_1\in(0,1)$ depending on $n,\lambda,\Lambda, q, \alpha,\bar\alpha$ such that if $\|A\|_{C^\alpha(\omega)}\le \Lambda+1$ and $\|b\|_{L^q(\omega)}\le \ep_1$ then the solution of the auxiliary problem
$$
\left\{\hskip 2mm\begin{aligned}
-\ld_1w_1&=\chi_{B_\rho} &\quad&x\in \omega,\\
w_1&=0, &\quad&x\in \partial \omega,
\end{aligned}\right.
\qquad
\mbox{is such that}\qquad
\inf_{\omega} \frac{w_1}{d} \ge c_1.
$$

\noindent{\it Proof}. By standard $L^\infty$ and $C^1$ estimates (see \cite[Theorem 8.16]{GT}, \cite[Corollary 8.36]{GT},  the remark at the end of \cite[Section~8.10]{GT},  \cite[Theorem 5.5.5']{Mo}) if $\|b\|_{L^q(\omega)}\le 1$ any solution of $-\ld_iz_i =  h$ in $\omega$, $z_i=0$ on $\partial\omega$ ($i=0,1$, $\ld_0$ is the operator from Step 1), $h\in L^q(\omega)$, is such that for some $C_0$ depending on $n,\lambda,\Lambda, q, \alpha,\bar\alpha$,
$$
\|z_i\|_{C^1(\omega)}\le C_0\| h\|_{L^q(\omega)}.
$$

Now notice that $w_1 = w_0-z_0$, where $w_0$ is the function from Step 1, and $z_0$ solves
$$-\ld_0z_0 = - b\cdot \nabla w_1=:h\quad\mbox{ in }\quad \omega.$$
Since $\|h\|_{L^q(\omega)}\le C_0\ep_1$, by choosing $\ep_1$ small enough we have
$\|z_0\|_{C^1(\omega)}\le  c_0/2$, where $ c_0$ is the constant from Step 1. Thus
$$
\inf_{\omega} \frac{w_1}{d} \ge \inf_{\omega} \frac{w_0}{d} - \sup_{\omega} \frac{z_0}{d}\ge  c_0/2.
$$

\smallskip

{\bf Step 3.}
There exist constants $\epsilon_0, c_0>0$ depending on $n,\lambda,\Lambda, q, \alpha,\bar\alpha$, such that if \eqref{hyplambda1b} holds then the solution of the auxiliary problem
$$
\left\{\hskip 2mm\begin{aligned}
-\tilde \ld^*w&=\chi_{B_\rho} &\quad&x\in \omega,\\
w&=0, &\quad&x\in \partial \omega,
\end{aligned}\right.
\qquad
\mbox{is such that}\qquad
\inf_{\omega} \frac{w}{d} \ge c_0.
$$

\noindent{\it Proof}. We recall that $\tilde \ld^*w = \mathrm{div}(\tilde A^T \nabla w - \tilde b
_2w) - \tilde b_1 \nabla w + \tilde cw$. Let $\psi\in H^1_0(\omega)$ be the solution of the Dirichlet problem
$$
\tilde \ld^*\psi = -div(\tilde b_2) + \tilde c
$$
(this problem has a unique solution since $\lambda_1(-\tilde \ld^*,\omega)>0$, cf.~\eqref{upperlam2}).
By \eqref{hyplambda1b} and the generalized maximum principle \cite[Theorem 8.16]{GT} we have \be\label{estlinfpsi}
\|\psi\|_{L^\infty(\omega)}\le C\eps_0.\ee
We can also write
$$
\mathrm{div}(\tilde A^T \nabla\psi) - \tilde b_1 \nabla \psi  = -div(\tilde b_2(1-\psi)) + \tilde c(1-\psi)
$$
and apply  Morrey's $W^{1,q}$-regularity estimate (see \cite[Section~5.5]{Mo}) to this equation.
Together with  \eqref{estlinfpsi} and Sobolev embeddings  this gives
$$
\|\psi\|_{C^\alpha(\omega)}\le C\|\psi\|_{W^{1,q}(\omega)}\le
C(\|\psi\|_{L^\infty(\omega)} + \|\tilde b_2(1-\psi)\|_{L^q(\omega)} + \|\tilde c(1-\psi)\|_{L^q(\omega)})\le C_1\ep_0,
$$
provided  $\ep_0\le 1$.
We further diminish $\ep_0$ so that $C_1\ep_0<1/2$. Then the function $\phi=1-\psi$ is such that
\be\label{propfi}
\tilde \ld^*\phi= 0 \;\mbox{ in } \omega,\qquad 1/2\le\phi\le3/2 \;\mbox{ in } \omega\qquad \mbox{and} \qquad [\phi]_{\alpha,\omega}\le C\|\nabla \phi\|_{L^q(\omega)}\le C\ep_0.
\ee

Set $w_1= w/\phi$. It is easy to compute that $\tilde \ld^*w = \tilde \ld^*(\phi w_1) = \hat \ld (w_1)$, where $\hat \ld$ is the operator of the form \eqref{defdiv} whose coefficients are given by
$$
\hat A = \phi \tilde A^T, \quad \hat b_1 = \tilde A^T \nabla\phi - \tilde b_2 \phi, \quad
\hat b_2 =  -\tilde b_1\phi,
\quad \hat c =- \tilde b_1 \nabla \phi + \tilde c\phi.
$$
Since div$(\hat b_1) + \hat c = \tilde \ld^*\phi= 0$ in the sense of distributions in $\omega$, we see that the equation $-\hat \ld(w_1) = -\tilde \ld^*w=\chi_{B_\rho}$  is equivalent in the weak sense to
$-\ld_1w_1 = \chi_{B_\rho}$ where
$$
\ld_1 w_1 = \mathrm{div}(\phi \tilde A^T \nabla w_1) +(  \tilde A^T \nabla \phi -(\tilde b_1+\tilde b_2)\phi)\nabla w_1$$
is an operator to which Step 2 applies, thanks to \eqref{propfi} and an appropriate choice of $\ep_0$. So $w_1\ge cd$ and by using \eqref{propfi} again we get $w\ge (c/2)d$ in $\omega$.

\smallskip
{\bf Step 4.} We claim that
\be \label{HopfAuxClaim2}
\inf_{B_\rho}v\ge c_0  \int_{\omega} g d.
\ee
Indeed, by the interior weak Harnack inequality (see \cite[Theorem 8.18]{GT} and remark on \cite[p.209]{GT}) and assumption \eqref{hyplambda1b} with $\ep_0\le1$, we have $\inf_{B_\rho}v\ge c_0\int_{B_\rho} v$. Therefore, by the duality, if $w$ is the function from the previous step
$$\inf_{B_\rho}v\ge c_0\int_{\omega} v\chi_{B_\rho}=c_0\int_{\omega} v(-\tilde{\mathcal{L}}^*w)
= c_0\int_{\omega} w(-\tilde{\mathcal{L}}v)\ge c_0\int_{\omega} gw.$$
Step 3 then gives \eqref{HopfAuxClaim2}.

\smallskip

{\bf Step 5.} We introduce the auxiliary problem:
$$
-\tilde{\mathcal{L}}z=0 \mbox{ in } \omega\setminus B_\rho,\qquad
z=0 \mbox{ on } \partial \omega,\qquad
z=1 \mbox{ on } \partial B_\rho.
$$
Note that this problem is solvable owing to $\lambda_1(-\tilde{\mathcal{L}},\omega\setminus B_\rho)\ge\lambda_1(-\tilde{\mathcal{L}},\omega)>0$, and then $z>0$ by the maximum principle.
We claim that
\be \label{HopfAux6}
\inf_{\omega\setminus B_\rho} \frac{z}{d} \ge c_0.
\ee

Indeed, by \eqref{hyplambda1b} and the H\"older estimate (cf.~\cite[Theorem 8.29]{GT} and the remark at the end of \cite[Section 8.10]{GT}), there exist
$c_0, C_0>0$, and $\delta_0\in(0,\rho)$, depending only on $n,q,\lambda,\Lambda,\alpha, \bar \alpha$, such that (using $z=1$ on $\partial B_\rho$)
\be \label{HopfAux4}
|z(x)-1|\le C_0\delta_0^\alpha, \quad\mbox{hence} \quad z(x)\ge 1/2\ge c_0 d(x),\quad \mbox{for }\; x\in B_{\rho+\delta_0}\setminus B_\rho,
\ee
so  by the global  Harnack inequality,  with $\hat d(x)={\rm dist}(x,\partial (\omega\setminus B_{\rho}))$,
\be \label{HopfAux5}
\inf_{\omega\setminus B_{\rho}} \frac{z}{\hat d} \ge c_0 \sup_{\omega\setminus B_{\rho}} \frac{z}{\hat d}\ge c_0 \sup_{ B_{2\rho}\setminus B_{\rho}} {z}\ge c_0/2.
\ee
Combining \eqref{HopfAux4} and \eqref{HopfAux5} proves the claim.

\smallskip
{\bf Step 6.} Conclusion.
Set $Z=v-c_0(\int_{ \omega} g d)z$.
We have $-\tilde{\mathcal{L}}Z\ge 0$ in $\omega\setminus B_\rho$, $Z=0$ on $\partial \omega$ and $Z\ge 0$ on $\partial B_\rho$
in view of \eqref{HopfAuxClaim2}.
By the maximum principle (which applies owing again to $\lambda_1(-\tilde{\mathcal{L}},\omega)>0$), we deduce that
$Z\ge 0$ in $\omega\setminus B_\rho$. Therefore, by \eqref{HopfAux6},
$$\inf_{\omega\setminus B_\rho} \frac{v}{d} \ge c_0 \int_{\omega} g d.$$
This combined with \eqref{HopfAuxClaim2} completes the proof of Lemma~\ref{LemOptimizedHopf2}.
\end{proof}

The next simple lemma will permit us to cover the domain $\Omega$
with small balls such that a suitably rescaled version of the operator in each of these balls
satisfies the assumptions of Lemma~\ref{LemOptimizedHopf2}.
\begin{lem} \label{LemOptimizedHopf3}
Let $A, b_1, b_2, c$ satisfy the assumptions of Theorem~\ref{thm4gen}(i).
Let $x_0\in \overline\Omega$
and $r_0$ be the number from \eqref{defr0}. There exists $\delta\in(0,1/2)$ depending only on $n, \lambda, \Lambda, q, \alpha, \bar\alpha$, such that
the (rescaled) operator
\be\label{def-A-resc1}
\tilde{\mathcal{L}} v=\tilde{\mathcal{L}}_\delta v=\mathrm{div}(\tilde A(y)\nabla v+\tilde b_1(y) v)+\tilde b_2(y)\cdot \nabla v+\tilde c(y)v,
\quad y\in  \omega,
\ee
where $x=x_0+ry$, $$r=\delta r_0,\qquad\omega=\omega_\delta= r^{-1}(\Omega-x_0)\cap B_2,$$
\be\label{def-A-resc2}
\tilde A(y)=A(x_0+ry), \quad \tilde b_i(y)=rb_i(x_0+ry),\quad \tilde c(y)=r^2c(x_0+ry).
\ee
is such that \eqref{hyplambda1b} holds.
 If, moreover, $x_0\in\partial\Omega$ then
there exists a $C^{1,\bar\alpha}$ domain $\tilde\omega$
 with norm bounded by~$C=C(n,\bar\alpha)>0$, such that $r^{-1}(\Omega-x_0)\cap B_{3/2}\subset\tilde\omega\subset r^{-1}(\Omega-x_0)\cap B_2$.
\end{lem}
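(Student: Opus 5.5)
The plan is to verify \eqref{hyplambda1b} term by term by direct computation of how each norm transforms under $x=x_0+ry$, $r=\delta r_0$. The defining property \eqref{defr0} of $r_0$ gives, via Proposition~\ref{basicr0}, $r_0\bigl([A]^{1/\alpha}_{\alpha,r_0,\Omega}+\|b_1\|_{L^\infty}+[b_1]^{1/(1+\alpha)}_{\alpha,r_0,\Omega}+\|b_2\|^{\beta_q}_{q,r_0,\Omega}+\|c\|^{\gamma_q}_{q,r_0,\Omega}\bigr)\le r_0M\le 1$. Hence each bracketed quantity, raised to its power, is at most $r_0^{-1}$.

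Since $\omega\subset B_2$, its image in $x$-variables lies in $B_{2r}(x_0)\cap\Omega\subset B_{r_0}(x_0)\cap\Omega$ once $\delta<1/2$. So all uniformly local quantities at scale $r_0$ control the global ones on $\omega$; for H\"older brackets, two points of $B_{2r}(x_0)$ lie in a common ball $B_{r_0}(x_0)$.

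The individual bounds are then as follows.
\begin{itemize}
\item $[\tilde A]_{\alpha,\omega}=r^\alpha[A]_{\alpha,B_{2r}(x_0)\cap\Omega}\le \delta^\alpha (r_0^\alpha [A]_{\alpha,r_0,\Omega})\le\delta^\alpha$.
\item $\|\tilde b_1\|_{L^\infty(\omega)}=r\|b_1\|_\infty\le\delta$.
\item $[\tilde b_1]_{\alpha,\omega}=r^{1+\alpha}[b_1]\le\delta^{1+\alpha}$.
\item $\|\tilde b_2\|_{L^q(\omega)}=r^{1-n/q}\|b_2\|_{L^q(B_{2r}(x_0)\cap\Omega)}\le\delta^{1-n/q}(r_0^{1-n/q}\|b_2\|_{q,r_0,\Omega})\le\delta^{1-n/q}$, using $\beta_q=1/(1-n/q)$.
\item $\|\tilde c\|_{L^q(\omega)}=r^{2-n/q}\|c\|_{L^q}\le\delta^{2-n/q}$, using $\gamma_q$.
\item $\|\tilde b_1\|_{L^q(\omega)}\le|B_2|^{1/q}\|\tilde b_1\|_\infty\le C\delta$.
\end{itemize}
For $q=\infty$ the same computation holds with the obvious exponents. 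Choosing $\delta$ small, depending only on $n,q,\alpha$, makes the first group at most $1$ and the second group at most $\ep_0$. Note that $\ep_0$ is fixed by Lemma~\ref{LemOptimizedHopf2} and depends only on the allowed parameters.

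For the boundary statement, when $x_0\in\partial\Omega$ I would use $r\le r_0\le r_\Omega\le\rho_\Omega$. The flattening diffeomorphism $\Phi$ at $x_0$ at scale $\rho_\Omega$, or the graph representation with $[D\varphi]_{\bar\alpha}\le k_\Omega$ on $|y'|<\bar\rho_\Omega$, rescaled by $r$, represents $r^{-1}(\Omega-x_0)$ near $B_2$ as the supergraph of $\varphi_r(y')=r^{-1}\varphi(ry')$. This function satisfies $D\varphi_r(0)=0$ and $[D\varphi_r]_{\bar\alpha}=r^{\bar\alpha}k_\Omega\le (r_\Omega)^{\bar\alpha}k_\Omega\le 1/120$, by \eqref{def_romega}, on $|y'|<\bar\rho_\Omega/r\ge 4/\delta$. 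The boundary inside $B_2$ is therefore a nearly flat $C^{1,\bar\alpha}$ graph.

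To build $\tilde\omega$, I would intersect the supergraph $\{y_n>\varphi_r(y')\}$ with a fixed smooth convex set $K$ satisfying $B_{3/2}\subset K\subset B_2$, for instance a smoothed cylinder, and smooth the corners where $\partial K$ meets the graph. Because the graph is almost horizontal (slope $\le |y'|^{\bar\alpha}/120$), it crosses $\partial K$ transversally, and a standard corner-rounding produces a $C^{1,\bar\alpha}$ domain with norm bounded by $C(n,\bar\alpha)$ and the required inclusions. This geometric smoothing is the only mildly delicate step. I expect transversality from the smallness constant $1/120$ in $r_\Omega$ to make it routine; alternatively, the construction can be done once in the flat picture and transported by the near-identity map $y\mapsto(y',y_n-\varphi_r(y'))$.
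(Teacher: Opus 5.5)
Your proposal is correct and follows the paper's own argument: a direct scaling computation of each coefficient norm, bounded through $r_0M\le 1$ (the definition of $r_0$, cf.\ \eqref{relMr0}) and then made small by shrinking $\delta$ to reach \eqref{hyplambda1b}, with the boundary statement coming from the graph description of $\partial\Omega$ at scale $r\le r_\Omega$. You are in fact more careful than the paper in two places: you correctly use $B_{2r}(x_0)\subset B_{r_0}(x_0)$ where \eqref{rescaledLq} writes $B_r(x_0)$, and you spell out the construction of $\tilde\omega$ (transporting a fixed smooth flat-bottomed domain by $y\mapsto(y',y_n-\varphi_r(y'))$), which the paper simply calls clear.
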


\begin{proof} The first statement follows  by the following simple  computation: fix any $x_0\in \overline\Omega$, $r=\delta r_0\in (0,r_0/2]$,
{where $r_0$ is the number from \eqref{defr0},} and let $v(y)=u(x_0+ry)$. We note that if $u$ satisfies $\ld u=(\le,\ge)f$ in $\Omega$ then
the function $v$ satisfies
$$\tilde{\mathcal{L}}v=(\le,\ge)\tilde{f}\quad\hbox{ in $\omega:=B_2\cap r^{-1}(\Omega-x_0)$,}$$
where the coefficients of the modified operator $\tilde{\mathcal{L}}$ are $\tilde A(y)= A(x_0+r y)$,
$\tilde b_i(y)=r
b_i(x_0+r y)$, $\tilde c(x)=r^2c(x_0+ry)$, and $\tilde{f}(x)=r^2 f(x_0+ry)$. We compute
\be\label{rescaledLq}
\begin{aligned}
\|\tilde b_2\|_{L^q(\omega)}
&=r\Bigl(\int_{\omega} |b_2(x_0+ry)|^q\,dy\Bigr)^{1/q}\\
&\le r^{1-n/q}\Bigl(\int_{\Omega\cap B_{r}(x_0)} |b_2(x)|^q\,dx\Bigr)^{1/q}
\le r^{1-n/q}\|b_2\|_{q,r_0,\Omega}
\end{aligned}
\ee
and similarly  $\|\tilde c\|_{L^q(\omega)} \le r^{2-n/q}\|c\|_{q,r_0,\Omega}$,
$\|\tilde f\|_{L^q(\omega)} \le r^{2-n/q}\|f\|_{q,r_0,\Omega}$, $[\tilde A]_{\alpha,\omega}\le r^\alpha [\tilde A]_{\alpha,r_0,\Omega}$. Hence, by the definition of $r_0$, for $\delta\le1$,
$$[\tilde A]_{\alpha, 1, \omega}\le 1,\quad \|\tilde b_1\|_{L^\infty(\omega)}\le 1,\quad  [\tilde b_1]_{\alpha, 1, \omega}\le 1, \quad\|\tilde b_2\|_{L^q(\omega)}\le 1,\quad \|\tilde c\|_{L^q(\omega)}\le 1$$
so by further diminishing $\delta$, from \eqref{rescaledLq} we get \eqref{hyplambda1b}.

The second statement is clear from the definition of $r_\Omega$ at the beginning of Section~\ref{sec-main}
and the fact that $r_0\le r_\Omega$.
\end{proof}

We are now in a position to prove Theorem~\ref{thm4gen}(i), of which Theorem \ref{OptimizedHopf} is a special case.

\begin{proof}[Proof of Theorem~\ref{thm4gen}(i)]
Since $f\ge 0$, the  weak
global Harnack inequality \eqref{sharpWBHI} gives
\be\label{BHIud}
\inf_\Omega \frac{u}{d}\ge
D^{-n/\epsilon}e^{-{C_0(r_\Omega^{-1}+M)D}} \Bigl\| \frac{u}{d}\Bigr\|_{L^\epsilon(\Omega)}.
\ee
Let $r$ be the number given  by the previous lemma.
By Proposition~\ref{geodes}(ii), we may cover $\Omega$ by $N$ balls $B_{r}(x_i)$ in such a way that:
$$
\hbox{$N\le C(n)(D_0/r)^n$,}\qquad
\hbox{$|\Omega\cap B_r(x_i)|\ge c(n)r^n$,}\qquad
\hbox{and either $x_i\in\partial\Omega$ or $d(x_i)\ge 3r/2$}.
$$
Denote $\tilde\Omega=r^{-1}(\Omega-x_i)$, $B_R^+:=B_R\cap \tilde\Omega$, and set
$$
\omega_i=
\begin{cases}
B_2,& \hbox{if $x_i\in \Omega$}\\
\tilde \omega,& \hbox{if $x_i\in\partial\Omega$,}
\end{cases}
$$
where $\tilde \omega$ is the $C^{1,\bar\alpha}$ domain given by Lemma~\ref{LemOptimizedHopf3},
with norm less than~$C(n,\bar\alpha)$, which satisfies
$B_{3/2}^+\subset\tilde \omega\subset B_{2}^+$.
For each $i$, the function $v_i(y):=u(x_i+ry)$ solves
$$\tilde{\mathcal{L}}_iv_i
=g_i(y),
\quad y\in \omega_i$$
where $\tilde{\mathcal{L}}_i$ is defined by \eqref{def-A-resc1}-\eqref{def-A-resc2} with $x_0=x_i$, and $g_i(y)=r^2f(x_i+ry)$.
If $x_i\not\in\partial\Omega$, it follows from Lemma \ref{LemOptimizedHopf2}(i) that
$$\inf_{B_1}v_i\ge C_0\int_{B_1}g_i,
\quad\hbox{ or equivalently}\ \
\inf_{B_r(x_i)}u\ge C_0r^{2-n}\int_{B_r(x_i)} f,\quad\hbox{ hence}$$
\be\label{infvi1}
\inf_{B_r(x_i)} \frac{u}{d}\ge C_0D_0^{-2}r^{2-n}\int_{B_r(x_i)} fd.
\ee
If $x_i\in\partial\Omega$, it follows from Lemma \ref{LemOptimizedHopf2}(ii) that
$$\inf_{B_1^+}\frac{v_i}{d_i}\ge C_0 \int_{B_1^+} g_id_i,
\quad\hbox{where } d_i(y)={\rm dist}(y,\omega_i).$$
We claim that
\be\label{compdistomegai}
{\rm dist}(y,\partial B_{3/2}^+)\ge \min\bigl(1/2,{\rm dist}(y,\partial \tilde\Omega)\bigr),\quad y\in B_1^+.
\ee
Indeed, for given $y\in B_1^+$, denote $p$ the projection of $y$ onto $\partial B_{3/2}^+$.
If $p\in \partial B_{3/2}$, then $|p-y|\ge 1/2$. Otherwise, $p\in \partial\tilde\Omega$
and $B_{|p-y|}(y)\subset B_{3/2}^+\subset\tilde\Omega$, hence $|p-y|={\rm dist}(y,\partial \tilde\Omega)$.
This proves \eqref{compdistomegai}.
Since ${\rm dist}(y,\partial \tilde\Omega)=r^{-1}{\rm dist}(x_i+ry,\partial\Omega)$ and $r\le D_0$, it follows that
$$
\begin{aligned}
d_i(y)
&\ge {\rm dist}(y,\partial B_{3/2}^+)\ge
\min\bigl(1/2,{\rm dist}(y,\partial \tilde\Omega)\bigr)
\ge (2D_0)^{-1}d(x_i+ry),\quad y\in B_1^+.
\end{aligned}$$
Scaling back to $u$, we get
\be\label{infvi2}
\begin{aligned}
\inf_{\Omega\cap B_r(x_i)}\frac{u}{d}
&\ge (2D_0)^{-1}
\inf_{y\in B_1^+} \frac{u(x_i+ry)}{d_i(y)}\ge
C_0D_0^{-1}
\int_{B_1^+} g_i(z) d_i(z)\,dz \\
&\ge C_0 D_0^{-2}r^2
\int_{B_1^+} f(x_i+rz)d(x_i+rz)\,dz=C_0D_0^{-2}r^{2-n}
\int_{\Omega\cap B_r(x_i)} fd.
\end{aligned}
\ee

Now, since
$$
\sum_{i=1}^N \int_{\Omega\cap B_r(x_i)}fd\ge \int_{\Omega}fd,
$$
we may choose $i=i_0$ such that
\be\label{choice}\int_{\Omega\cap B_r(x_i)}fd\ge N^{-1}\int_{\Omega}fd.\ee
Inequalities \eqref{BHIud}, \eqref{infvi1} and \eqref{infvi2} then yield:
$$\begin{aligned}
\inf_\Omega \frac{u}{d}&\ge
D^{-n/\epsilon}e^{-{C_0(r_\Omega^{-1}+M)D}} \Bigl\| \frac{u}{d}\Bigr\|_{L^\epsilon(\Omega)}
\ge
D^{-n/\epsilon}e^{-{C_0(r_\Omega^{-1}+M)D}}
\Bigl\| \frac{u}{d}\Bigr\|_{L^\epsilon(\Omega\cap B_r(x_i))} \\
&\ge
D^{-n/\epsilon}e^{-{C_0(r_\Omega^{-1}+M)D}}
|\Omega\cap B_r(x_i)|^{1/\epsilon}
\inf_{\Omega\cap B_r(x_i)} \frac{u}{d} \\
&\ge e^{-C_0(r_\Omega^{-1}+M)D} (r/D)^{(n/\epsilon)+2-n}D^{-n}
\int_{\Omega\cap B_r(x_i)} fd.
\end{aligned}$$
By using the inequalities  $N^{-1}\ge C(n) (r/D)^n$, $r/D =\delta r_0/D\ge C_0\bigl((r_\Omega^{-1}+M)D\bigr)^{-1}$, as well as
\eqref{relMr0}, \eqref{choice},
we deduce that
$$\inf_\Omega  \frac{u}{d}\ge
e^{-{C_0(r_\Omega^{-1}}+M)D}(r/D)^{(n/\epsilon)+2}D^{-n}
\int_{\Omega}fd
\ge e^{-{C_1(r_\Omega^{-1}+M)D}} D^{-n} \int_{\Omega}fd,$$
which is the desired result.
\end{proof}

\section{Proof of the differential Harnack estimates}
\label{proofloggrad}

We start by giving a short proof of  statements (ii1) and (iii) in  Theorem \ref{thm4gen}.

\begin{proof}[Proof of Theorem~\ref{thm4gen} (iii)]
{Fix $x_0\in\Omega$
 with $d_0=d(x_0)= \mathrm{dist}(x_0,\partial\Omega)$ and set
$$
r= \min\{r_0, d_0\},
$$
where $r_0$ is the number from \eqref{defr0}.
We rescale our equation
$-\ldu=f$
setting $y=(x-x_0)/r$, $u(x)=\tilde u(y)$,
and the coefficients of $\tilde{\ld}$ being defined by \eqref{def-A-resc2}. We obtain
\begin{equation}\label{rescl1}
 -\tilde{\ld}[\tilde u] = r^2f(x_0+ry)=:\tilde f(y)
\end{equation}
in the unit ball, since $r\le d_0$. Since $r\le r_0$, by the choice of $r_0$ we know (see for instance the computations in \eqref{rescaledLq}) that
$$
\|\tilde b_1\|_{C^\alpha(B_1)}\le C_0, \quad \|\tilde b_2\|_{L^q(B_1)}\le C_0,\quad \| \tilde c\|_{L^q(B_1)}\le C_0.
$$
Applying successively the standard $C^1$-to-$C^0$ estimate (\cite[Theorem 8.32]{GT}, \cite[Chapter 5.5]{Mo}), and the  Harnack inequality for \eqref{rescl1} (\cite[Theorem 8.18]{GT} and remark at the end of \cite[Section 8.10]{GT}), we obtain
$$
\begin{aligned}
|\nabla \tilde u(0)|& \le C_0 \sup_{B_{3/4}}\tilde u + C_0\|\tilde f \|_{L^q(B_1)}\\
&\le C_0\tilde u (0) +  C_0r^{2-n/q}\|f\|_{L^q(B_r(x_0)),}
\end{aligned}
$$
which implies that
$$
r|\nabla u (x_0)|\le C_0u(x_0) + C_0r^{2-n/q}\|f\|_{q,r_0,\Omega}.
$$
Hence, multiplying by $d_0/ru(x_0)$ and recalling the definition of $r$ and \eqref{relMr0}, we get
$$\begin{aligned}
\frac{ d_0|\nabla u(x_0)|}{u(x_0)}
&\le C_0 \max\Bigl\{ 1,\frac{d_0}{r_0}\Bigr\} +
C_0 \bigl(\min\{r_0,d_0\}\bigr)^{1-n/q}
 \frac{\|f\|_{q,r_0,\Omega}}{u(x_0)/d(x_0)}\\
&\le C_0 \max \bigl\{1, (r_\Omega^{-1}+M)d_0 \bigr\} +
C_0 \bigl(\min\{(r_\Omega^{-1}+M)^{-1},d_0\}\bigr)^{1-\frac{n}{q}}
 \frac{\|f\|_{q,r_0,\Omega}}{u(x_0)/d_0}.
\end{aligned}$$
Since $x_0\in \Omega$ is arbitrary, this also proves  assertion~(ii1) ($f\equiv 0$).  
If $f\not\equiv 0$, we can next
apply the optimized Morel-Oswald estimate \eqref{conclHopfgen} to the last denominator in this expression, and deduce
$$
{\frac{ d|\nabla u|}{u}}\le C_0 \max\bigl\{1,(r_\Omega^{-1}+M)d\bigr\} +
C_0 \bigl(\min\{(r_\Omega^{-1}+M)^{-1},d\}\bigr)^{1-\frac{n}{q}} e^{C_0(r^{-1}_\Omega+ M)D}D^n\frac{\|f\|_{q,r_0,\Omega}}{\|f\|_{L^1_{d}(\Omega)}},
$$
hence assertion~(iii).
We note that, in view of the exponential factor, we can discard the min in the last inequality without loss of information;
likewise, we cause no loss by replacing $\|f\|_{q,r_0,\Omega}$ with $\|f\|_{L^q(\Omega)}$ -- cf.~Remark~\ref{remnoloss} below.}
\end{proof}
\smallskip

We now turn to the (completely different) proof of the full Theorem~\ref{thm4gen}, based on a contradiction and doubling-rescaling argument
instead of the Harnack inequality.
For simplicity we will give this proof only in the case $\Omega=B_R$, but it can be easily
modified to the case of $C^{1,\bar\alpha}$ domains,
as the reader could verify. This proof is somewhat longer but it allows to show that the  logarithmic gradient bound is independent of $f$ for $n=1$.

\begin{proof}[Proof of  Theorem~\ref{thm4gen} (ii)-(iii).]
Let $\Omega=B_R$. By rescaling $x\to x/R$ and by using Proposition \ref{scaleinvtilde} (ii) we can suppose $R=1$.

We first transform the equation by the change of variable $v:=\log u$, $u=e^v$.
We compute
$$
\begin{aligned}
\mathcal{L}u
&=\nabla\cdot(A(x)e^v\nabla v+b_1(x)e^v)+e^vb_2(x)\cdot\nabla v+c(x)e^v \\
&=e^v\Bigl\{\nabla\cdot(A(x)\nabla v)+\nabla\cdot b_1(x)+\nabla v\cdot A(x)\nabla v
+b_1(x)\cdot\nabla v+b_2(x)\cdot\nabla v+c(x)\Bigr\} \\
\end{aligned}
$$
hence, setting $b=b_1+b_2$,
\be\label{eqforvk}
\mathcal{L}_1v
:=\nabla\cdot(A(x)\nabla v)+\nabla v\cdot A(x)\nabla v+b(x)\cdot\nabla v+\nabla\cdot b_1(x)+c(x)
=-e^{-v}f(x).
\ee
In terms of $v$, the sought-for estimate is equivalent to
$$
d|\nabla v| \le
\begin{cases}
C_0\max(1,Md),& \hbox{if $f\equiv 0$ or $n=1$}\\
\noalign{\vskip 1mm}
C_0\left(\max(1,Md)+d^{1-n/q}e^{C_1M}\frac{\|f\|_{q,r_0,B_1}}{\|f\|_{L^1_d(B_1)}}\right),
& \hbox{otherwise}
\end{cases}
$$
where $M=M(\ld,B_1)$, $r_0=r_0(\ld,B_1)$ (cf.~\eqref{defM}) and, in the second case, $C_1$ is the constant from the Morel-Oswald estimate
in Theorem~\ref{thm4gen} (i), which we already proved.

Assume for contradiction that there exist  sequences of  operators $\ld_k$ in the form \eqref{eqforvk}, with coefficients
$A_k, b_k, b_{1,k}, c_k$
satisfing \eqref{hyp1}-\eqref{hyp2}, $f_k\in L^q(B_1)$, solutions $v_k$ of \eqref{eqforvk} and points $y_k\in B_1$ for which
$$L_k(y_k)\ge 2k (d(y_k))^{-1},$$
with $M_k=M(\ld_k,B_1)$, $r_k=r_0(\ld_k,B_1)$ and
\be\label{defMk}
L_k(x):=
\begin{cases}
\ds\frac{|\nabla v_k(x)|}{\max(1,M_kd(x))},& \hbox{if $f_k\equiv 0$ or $n=1$}\\
\noalign{\vskip 1mm}
\ds\frac{|\nabla v_k(x)|}{{\max(1,M_kd(x))+e^{C_1M_k}d^{1-n/q}(x)\frac{\|f_k\|_{q,r_k,B_1}}{\|f_k\|_{L^1_d(B_1)}}}
},& \hbox{otherwise}.
\end{cases}
\ee
By the doubling lemma (cf.~\cite[Lemma 5.1]{PQS}), there are points $x_k\in B_1$ such that
\be\label{largenessMk}
L_k(x_k)\ge L_k(y_k), \qquad |\nabla v_k(x_k)| 
\ge L_k(x_k)\ge 2k (d(x_k))^{-1} \ge 2k,\quad\mbox{and}
\ee
\be\label{doublingMk}
L_k(z)\le 2 L_k(x_k)\qquad \mbox{if }\: |z-x_k|< k (L_k(x_k))^{-1}.
\ee
Set
\be\label{defrk}
\rho_k= |\nabla v_k(x_k)|^{-1} \qquad (\rho_k\to 0 \mbox{ as } k\to \infty).
\ee
Observe for later purposes that
\be\label{comprk}
\rho_k
\le (2k)^{-1}\min\{M_k^{-1},d(x_k)\}
\ee
owing to \eqref{defMk}, \eqref{largenessMk}, \eqref{defrk}.
We now rescale
\be\label{rescalingvk}
x=\rho_ky + x_k, \quad w_k(y) =  v_k(\rho_ky+x_k)-v_k(x_k),
\ee
i.e., $v_k(x) = w_k(\rho_k^{-1}(x-x_k))+v_k(x_k)$.
Note that, while in \eqref{eqforvk} we have an equation in $v$, we do the rescaling in such a way that only the gradients $\nabla w_k$ of the rescaled $v_k$ (rather than the $w_k$ themselves) stay bounded.
We obtain from \eqref{eqforvk} (omitting the variable $y$):
\be\label{eqforwk}
\nabla\cdot(\tilde A_k\nabla w_k)+\nabla w_k\cdot \tilde A_k\nabla w_k+\tilde b_k\cdot\nabla w_k+
\nabla\cdot \tilde b_{1,k}+\tilde c_k=-g_k,
\ee
where
$$
\tilde{A}_k(y) := A_k(\rho_ky + x_k),\quad
\tilde{b}_k(y) := \rho_k b_k(\rho_ky + x_k),\quad \tilde{b}_{1,k}(y) := \rho_k b_{1,k}(\rho_ky + x_k),$$
$$\tilde{c}_k(y) := \rho_k^2 c_k(\rho_ky + x_k),\quad \tilde{f}_k(y) :=  \rho_k^2 f(\rho_ky + x_k),\
\quad g_k(y):=e^{-w_k(y)}e^{-v_k(x_k)}\tilde f_k(y).$$
For $|y|\le k$, by \eqref{comprk}, we have
$d(x_k+\rho_ky)\le d(x_k)+\rho_k|y|\le  \frac32 d(x_k)$. This combined wih
\eqref{defMk}, \eqref{defrk} and \eqref{rescalingvk} implies
$$
|\nabla w_k(y)| = \rho_k |\nabla v_k(x_k+\rho_ky)|= \frac{|\nabla v_k(x_k+\rho_ky)|}{|\nabla v_k(x_k)|}
\,\le \frac32\frac{L_k(x_k+\rho_ky)}{L_k(x_k)},
$$
hence, owing to \eqref{doublingMk},
\be\label{boundDwk}
|\nabla w_k(y)| \le 3\quad\hbox{for $|y|<k$},\quad\mbox{and}
\ee
\be\label{boundDwk0}
|\nabla w_k(0)| = \rho_k |\nabla v_k(x_k)| = 1,\qquad w_k(0)=0.
\ee
As a consequence of the second part of \eqref{boundDwk0} and of \eqref{boundDwk}, we have
\be\label{boundwk}
|w_k(y)|\le 3|y|\quad\hbox{for $|y|<k$}.
\ee

 {Fix any $R_0>1$. Note that \eqref{comprk} and \eqref{relMr0} guarantee that for $k$ large (depending on $R_0$),
we have
{$\rho_k^{-1}\ge 2k\max\{M_k,d(x_k)^{-1}\}>
R_0(r_{B_1}^{-1}+M_k)=R_0r_k^{-1}$, hence} $R_0\rho_k<r_k$. We then obtain, as $k\to\infty$:}
\be\label{cvbk}
\|\tilde{b}_k\|_{L^q(B_{R_0})}\le
\rho_k^{1-n/q} \|{b}_k\|_{L^q({B_1 \cap B_{\rho_kR_0}(x_k))}}\le
\rho_k^{1-n/q} \|{b}_k\|_{q,r_k,B_1}\le (\rho_kM_k)^{1-n/q} \to 0,
\ee
\be\label{cvb11k}
\|\tilde{b}_{1,k}\|_{L^\infty(B_{R_0})}=\rho_k\|b_{1,k}\|_{L^\infty(B_{R_0})} \le
(2k)^{-1} M_k^{-1} \|b_{1,k}\|_{L^\infty(B_{R_0})} \le (2k)^{-1}
\to 0,
\ee
and similarly
\be\label{cvck}
\|\tilde{c}_k\|_{L^q(B_{R_0})}\to 0,\quad [\tilde{A}_k]_{\alpha, B_{R_0}}\to 0,\quad [\tilde{b}_{1,k}]_{\alpha, B_{R_0}}\to 0.
\ee
Therefore, passing to a subsequence, we may assume that $x_k\to x_\infty\in \overline B_1$
and  $A_k(x_k)\to A_\infty$, where $A_\infty$ is a constant
positive definite matrix.
By the compact embedding $C^\alpha\hookrightarrow C^0$,
$\tilde A_k(y)\to A_\infty$ uniformly for $y$ in any compact set.

\medskip

{\bf Case 1: $n\ge 2$.}
If $f_k\not\equiv 0$, we also need to estimate the right hand side $g_k$.
As above, we first write
\be\label{cvfk}
\|\tilde{f}_k\|_{L^q(B_{R_0})}\le
\rho_k^{2-n/q} \|{f}_k\|_{L^q({B_1\cap B_{\rho_kR_0}(x_k))}}
\le  \rho_k^{2-n/q} \|{f}_k\|_{q,r_k,B_1}.
\ee
We then use Morel-Oswald estimate
in Theorem~\ref{thm4gen} (i) and the second part of \eqref{largenessMk} to write
$$u_k(x_k)\ge e^{-C_1(1+M_k)}\|f_k\|_{L^1_d(B_1)}\,d(x_k)\ge e^{-C_1(1+M_k)}\|f_k\|_{L^1_d(B_1)} \ 2k L_k^{-1}(x_k)$$
hence,
$$\begin{aligned}
e^{-v_k(x_k)}
&=u_k^{-1}(x_k)\le C_0e^{C_1M_k}\|f_k\|_{L^1_d(B_1)}^{-1} (2k)^{-1} L_k(x_k). \\
\end{aligned}$$
Therefore, using
 \eqref{boundwk}, \eqref{cvfk}, and the definition \eqref{defMk} of $L_k(x_k)$ we get, for any $R_0>1$ and $k>R_0$,
$$
\begin{aligned}
\|g_k\|_{L^q(B_{R_0})}
&= e^{-v_k(x_k)} \|e^{-w_k}\tilde f_k\|_{L^q(B_{R_0})}
\le  C_0e^{C_1M_k}\|f_k\|_{L^1_d(B_1)}^{-1} L_k(x_k) e^{3R_0}\rho_k^{2-n/q} \|{f}_k\|_{q,r_k,B_1}\\
&\le {(2k)^{-2+n/q} e^{3R_0}L_k(x_k)e^{C_1M_k} \frac{ \|{f}_k\|_{q,r_k,B_1}}{\|f_k\|_{L^1_d(B_1)}}  d^{1-n/q}(x_k) |\nabla v_k(x_k)|^{-1}} \\ 
&\le (2k)^{-2+n/q} e^{3R_0} \to 0.
\end{aligned}
$$
By the last inequality, together with  \eqref{boundDwk} and \eqref{cvbk}-\eqref{cvck}, the equation \eqref{eqforwk} is in the form
$$
\mathrm{div} (\tilde A_k\nabla w_k) = -\mathrm{div}(\tilde{b}_{1,k}) + h_k
$$
where $\tilde{b}_{1,k}$ is bounded in $C^\alpha(B_{R_0})$ while $h_k$ is a function bounded in $L^q(B_{R_0})$, as $k\to\infty$. It follows from the $C^{1,\nu}$ interior
estimate in \cite[Theorem 8.32]{GT}, \cite[Chapter 5.5]{Mo},
that, for some $\nu\in(0,1)$, $w_k$ is bounded in $C^{1,\nu}_{loc}(\R^n)$,
and then, up to a subsequence,
$w_k$ converges in $C^1_{loc}(\R^n)$ to a weak solution $w\in C^{1,\nu}_{loc}(\R^n)$ of the limiting equation of \eqref{eqforwk}, namely
$$\nabla\cdot(A_\infty\nabla w)+\nabla w\cdot A_\infty\nabla w=0,\quad
x\in\R^n$$
with $|\nabla w(0)|=1$.
Thus the function $U=e^w$ is  a positive solution of
$\nabla\cdot(A_\infty\nabla U)=0$ in $\R^n$ (and $U$ is in fact a classical solution by elliptic regularity). Since $A_\infty$ is a constant matrix we have $\nabla\cdot(A_\infty\nabla U) = \mathrm{tr}(A_\infty D^2U)$, so after a linear change of the independent variable we obtain a positive harmonic function in $\rn$,
which must be constant, by the mean value property.
Since $|\nabla U(0)|\ne 0$, this is a contradiction. 

\medskip

{\bf Case 2: $n=1$.}
Pick any nonnegative, compactly supported $\varphi\in H^1(\R)$.
Testing \eqref{eqforwk} for $n=1$ with $\varphi$, we get
\be\label{eqforwk2}
\int\tilde A_kw'_k\varphi'-\int\tilde A_k(w_k')^2\varphi-\int\tilde b_k w'_k \varphi+\int \tilde b_{1,k}\varphi'-
\int\tilde c_k\varphi=\int g_k\varphi \ge 0.
\ee
In particular, taking $R_0>1$ and assuming
$supp(\varphi)\subset (-2R_0,2R_0)$, $0\le\varphi\le 1$,
$|\varphi'|\le1$ and $\varphi=1$ on $[-R_0,R_0]$, we obtain by \eqref{boundDwk}, \eqref{cvbk}-\eqref{cvck},
$$\int_{|y|<R_0} g_k\le 9 \Lambda R_0+3\|\tilde b_k\|_{L^1(B_{2R_0})}+4R_0\|b_{1,k}\|_\infty+\|\tilde c_k\|_{L^1(B_{2R_0})}
\le C(R_0).$$
Therefore, $g_k$ is bounded in $L^1_{loc}(\R)$.
{In view of \eqref{eqforwk}-\eqref{boundDwk} and \eqref{cvbk}-\eqref{cvck}, it follows that
$(\tilde A_kw_k'+\tilde b_{1,k})'$  is bounded in $L^1_{loc}(\R)$ hence, using again \eqref{boundDwk}-\eqref{cvb11k},
$\tilde A_kw_k'+\tilde b_{1,k}$  is bounded in $W^{1,1}_{loc}(\R)$.
By this along with \eqref{boundDwk} and \eqref{boundwk} (which say $w_k$ is bounded in $W^{1,\infty}_{loc}(\R)$), up to extracting a subsequence, there exist functions
$w\in W^{1,\infty}_{loc}(\R)$ and $z\in L^\infty_{loc}(\R)$ such that $\tilde w_k\to w$ in $L^\infty_{loc}(\R)$
and $\tilde A_kw_k'+\tilde b_{1,k}\to z$ in
$L^m_{loc}(\R)$ for all finite $m$ and almost everywhere. Up to extracting a further subsequence, we may also assume
that $\tilde A_k(y)\to a_\infty$ in $L^\infty_{loc}(\R)$ for some number $a_\infty>0$. Therefore, recalling \eqref{cvb11k},
$w_k'$ converges in $L^m_{loc}(\R)$, hence
\be\label{convwkw}
w_k\to w \quad\hbox{ in $W^{1,m}_{loc}(\R)$ for all finite $m$.}
\ee

We next claim that $w$ is nonconstant. Namely we will show that
\be\label{nonvanishw}
\int_{-1}^1 |w'(x)|dx>0.
\ee
To this end, to overcome the lack of $C^1$ convergence,
we need to ``thicken'' the normalization condition \eqref{boundDwk0}.
By extracting a further subsequence, we may assume that $w_k'(0)=-1$ for all $k$
or that $w_k'(0)=1$  for all $k$.
In the first case, by \eqref{eqforwk}-\eqref{boundDwk} and \eqref{cvbk}-\eqref{cvck}, there exists a sequence $\eps_k\to 0^+$ such that, for all $x\in [0,1]$,
$$\bigl[\tilde A_kw_k'\bigr]_0^x\le \int_0^x|\tilde b_kw_k'+\tilde c_k|ds+\bigl|\bigl[\tilde b_{1,k}\bigr]_0^x\bigr|\le \eps_k.$$
Therefore, there exists $\eta>0$ such that, for all $k$ sufficiently large,
$$w_k'(x)\le \frac{-\tilde A_k(0)+\eps_k}{\tilde A_k(x)}\le -\eta,\ \hbox{ for all $x\in[0,1]$.}$$
In the second case, we similarly get $w_k'(x)\ge \eta$ for all $x\in[-1,0]$.
In either case, by the convergence \eqref{convwkw}, we  have
$\int_{-1}^1 |w'(x)|dx\ge \eta$, which proves the claim.

Now, by virtue of \eqref{convwkw}, we may pass to the limit
in \eqref{eqforwk2} with the help of \eqref{cvbk}--\eqref{cvck}, to deduce that, for any nonnegative, compactly supported $\varphi\in H^1(\R)$, we have
$\int a_\infty w'\varphi'-\int a_\infty(w')^2\varphi \ge 0$, hence
$\int w'\varphi'- (w')^2\varphi \ge 0$.
For any nonnegative $\psi\in C^\infty_0(\R)$, we may then take $\varphi:=e^w\psi$, which gives
$$\int (e^w)'\psi'=\int w'(e^w(w'\psi+\psi'))- (w')^2e^w\psi =\int w'\varphi'- (w')^2\varphi\ge 0.$$
In other words, $(e^w)''\le 0$ in the distributional sense.
Fix a mollifying sequence $(\rho_j)$. For all $j$, it follows in particular that
$(e^w\ast\rho_j)''\le 0$ in $\R$. The smooth function $e^w\ast\rho_j\ge 0$ is thus concave on $\R$ and
must therefore be constant, hence $(e^w\ast\rho_j)'=0$.
Recalling that $e^w\in W^{1,\infty}_{loc}(\R)$, we have $0=(e^w\ast\rho_j)'=(e^w)'\ast\rho_j\to (e^w)'$
in $L^1_{loc}(\R)$. Consequently $w^\prime e^w=(e^w)'=0$ a.e., hence $w'=0$ a.e.: a contradiction with \eqref{nonvanishw}.}
\end{proof}

\section{Sharpness of the Morel-Oswald estimate}
\label{sec-proofoptim}

\begin{proof}[Proof of Proposition~\ref{Prop-Optim-Hopf0}]
We can assume  that $R=1$. Indeed, applying the transformation $\tilde u(x)=u(y)=u(Rx)$, by the scaling property \eqref{scaleinvtilde5} in Proposition \ref{scaleinvtilde}, the fact that $Rd_1(x)=d_R(Rx)$ (here $d_R(y)=\mathrm{dist}(y, \partial B_R)$), and
$\int_{B_1}\tilde fd_1=
R^{1-n}\int_{B_R}fd_R$, we  have
$$ e^{-C_0(1+\tilde M)}\left(\int_{B_1}\tilde fd_1\right)d_1(x)=e^{-C_0(1+MR)}R^{-n}\left(\int_{B_R}fd_R\right)d_R(y).$$

Fix any $\lambda>0$ and let $\phi=\frac12(1-|x|^2)$.
We look for a (radial) solution of the form
$$u=e^{\lambda\phi}-1+K\phi,$$
with $K\ge 0$ to be determined.
We note  that $u>0$ in $B_1$ and $u=0$ on~$\partial B_1$.
By direct computation we have
$$\nabla\phi=-x,\quad \Delta\phi=-n,$$
$$\nabla e^{\lambda\phi}=\lambda e^{\lambda\phi}\nabla\phi=-\lambda e^{\lambda\phi}x,\qquad
\Delta e^{\lambda\phi}=e^{\lambda\phi}\bigl(\lambda \Delta\phi+\lambda^2 |\nabla\phi|^2\bigr)
=e^{\lambda\phi}\bigl(-\lambda n+\lambda^2 |x|^2\bigr).$$
For any smooth functions $b, c$ such that $b\cdot x\le 0$ and $c\ge 0$,
choosing $K=n^{-1}\|c\|_\infty$, it follows that
$$\begin{aligned}
f:=-\Delta u+b\cdot\nabla u+cu
&=e^{\lambda\phi}\bigl(\lambda n-\lambda^2|x|^2-\lambda b\cdot x+c\bigr)-c+K(n-b\cdot x+c\phi) \\
&\ge e^{\lambda\phi}\bigl(\lambda n-\lambda^2|x|^2-\lambda b\cdot x+c\bigr).
\end{aligned}$$
We now choose
$$
c=c_\lambda=\lambda^2|x|^2\;\mbox{ and }\; b=0,\qquad\mbox{or}\qquad
b=b_\lambda=-\lambda x\;\mbox{ and }\; c=0.
$$
Then
\be \label{optimHopf3}
\lambda=\|c\|_\infty^{1/2} \quad\hbox{(resp., $\|b\|_\infty$),}
\ee
 and in either case we have $f\ge\lambda n e^{\lambda\phi}$, hence
\be \label{optimHopf4}
\int_{B_{1}} fd
\ge \lambda n\ds\int_{0<|x|<1/2} e^{\lambda(1-|x|^2)/2} (1-x)dx\ge C(n)\lambda e^{3\lambda/8}.
\ee
On the other hand, on the boundary $\partial B_1$ we have $|u_\nu|=-x\cdot \nabla u = \lambda+K=\lambda+n^{-1}\lambda^2$ (resp.~$|u_\nu|=\lambda$ if $b=-\lambda x$, $c=0$),
hence $|u_\nu|\le \lambda(1+\lambda)$ in either case.
Consequently,
we deduce from \eqref{optimHopf4}  that
$$|u_\nu|\,\Bigl(\int_{B_1} fd\Bigr)^{-1}\le C(n)\lambda(1+\lambda)e^{-3\lambda/8}
\le C(n)e^{-\lambda/4}\le C(n)e^{-C_1(n)M},$$
where we used \eqref{optimHopf3} and Proposition~\ref{lemr0ul}(i) to get the last inequality.
Finally, by Proposition~\ref{lemr0ul}(ii), the function
$M(\lambda):=\|c_\lambda\|^{\gamma_q}_{q,r_\lambda,B_1}$ {with $r_\lambda=r_0(\Delta-c_\lambda, B_1)$}
is continuous on $(0,\infty)$ and $\lim_{\lambda\to \infty}M(\lambda)=\infty$,
whereas $\lim_{\lambda\to 0}M(\lambda)=0$ owing to \eqref{culinfty}.
Therefore the range of $M(\lambda)$ is the whole half-line $(0,\infty)$ (and similarly for $b_\lambda$),
which completes the proof.
\end{proof}

\begin{rem} \label{Hopf-optL1d}
As mentioned in the introduction, the $L^1_d$ norm in the right-hand side of the quantitative Morel-Oswald inequality \eqref{conclHopf} cannot be replaced by the stronger norms $\|\cdot\|_{L^p_d}$ or $\|\cdot\|_{L^1_{d^\epsilon}}$
for $p>1$ or $\epsilon<1$.
Indeed, consider for instance the case of the Laplacian, and assume for contradiction that the estimate
$u(x)\ge C_1 \|f\|d(x), \ x\in\Omega,$ is true for one such norm of $f$ with some constant $C_1>0$,
for each $u$ and $f$ such that $-\Delta u \ge f\ge0$ in $B_1$.
Denoting by $G(x,y)$ the Green function of the Dirichlet Laplacian in $B_1$
we have, by classical estimates (see e.g.~\cite{Zh} and the references therein):
$$G(x,y)\le Cd(x)d(y)\quad\hbox{for all $x,y\in B_1$ such that $|x-y|\ge 1/4$}.$$
Consequently, for any nonnegative $f\in C(\overline B_1)$ with support in $\overline B_1\setminus B_{1/2}$,
$$C_1\|f\| \le  u(0)=\int_\Omega G(0,y)f(y)dy\le C\int_\Omega f(y)d(y)dy= C\|f\|_{L^1_d},$$
a contradiction.\end{rem}

\begin{rem} \label{Hopf-optC1}
Furthermore, the Morel-Oswald inequality fails if we take $\alpha=0$ in the assumptions on the $C^\alpha$-regularity of the leading order coefficients of $\ld$ or  the $C^{1,\alpha}$-regularity of $\partial\Omega$. Indeed, say $0\in\partial\Omega$, $\partial \Omega$ is $C^\infty$-smooth except at the origin,  in a neighbourhood of $0$ we have $\partial\Omega = \{x=(x^\prime,x_n)\::\:  \psi(|x^\prime|)<x_n<r_0\}$ for some $r_0>0$ and a nonnegative  function $\psi\in C^1(0,r_0)$ such that $\int_0 t^{-2}\psi(t)\,dt=\infty$ (for instance $\psi(t) = t(|\log|t||^{-1})$). If $f\in C^\infty(\overline{\Omega})$ is such that $0\le f\le1$ in $\Omega$, $f=0$ in $\Omega\cap\{x_n<r_0/3\}$ and $f=1$ in $\Omega\cap\{x_n>2r_0/3\}$, then the Hopf-Oleinik lemma (and a fortiori the Morel-Oswald estimate) fails for the positive solution of the Dirichlet problem $-\Delta u = f$ in $\Omega$, $u=0$ on $\partial \Omega$ - see
 \cite[Theorem 1.3]{Saf}. Next, effecting the change of variables $y^\prime=x^\prime$, $y_n=x_n-\psi(|x^\prime|)$, $v(y)=u(x)$, $\tilde f(y)=f(x)$, we obtain a positive solution to an equation $\mathrm{div}(A(y)\nabla v) = \tilde f (y)$ in a smooth domain in which $A\in C^0$, for which the Hopf-Oleinik lemma fails.
\end{rem}

\section{Sharpness of the global and differential Harnack  estimates}
\label{sec-proofoptim2}

We first show that the  exponential nature of the constant in front of the right-hand side $f$ in the Harnack inequality (Theorem \ref{BHIoptim}) cannot be improved.

\begin{prop}\label{Optimality_Harnack2}
Let $n\ge 1$, $n<q\le\infty$, and $R>0$.
There exist a constant $\lambda_0=\lambda_0(n)>0$,  sequences $b_j, c_j,f_j\in C^\infty(\overline{B_R})$,
and a classical solution $u_j>0$ of
\be \label{optimLinfty1d}
-\ld_ju_j:=-\Delta u_j+c_ju_j=f_j\; \mbox{ in } B_R,\qquad
u_j=0 \; \mbox{ on }\partial B_R,
\ee
respectively of
\be \label{optimLinfty1db}
-\ld_ju_j:=-\Delta u_j + b_j\cdot\nabla u_j - \lambda_0R^{-2} u_j=f_j \; \mbox{ in } B_R,\qquad
u_j=0 \; \mbox{ on } \partial B_R,
\ee
such that, for some $C=C(n)>0$, 
\be \label{optimLinfty24}
\inf_{B_R}(u_j/d)=0\qquad\mbox{and}\qquad\sup_{B_R}(u_j/d)\ge CR^{1-n/q}\exp\left(CM_jR\right) \| f_j\|_{q,r_j,B_R},
\ee
where $M_j=\|c_j\|^{\gamma_q}_{q,r_j,B_R}\to\infty$, resp.~$M_j=\|b_j\|^{\beta_q}_{q,r_j,B_R}\to\infty$,
with $r_j=r_0(\ld_j,B_R)$.

\end{prop}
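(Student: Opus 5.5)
Reduce to $R=1$ by the scaling in Proposition~\ref{scaleinvtilde}(ii). Then build explicit radial solutions in which $f_j$ is concentrated near the centre and $u_j/d$ vanishes at one boundary point. The exponential gap is created by the zeroth- or first-order coefficient, as in the proof of Proposition~\ref{Prop-Optim-Hopf0}.

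Since $\inf(u_j/d)=0$ is required, $u_j$ must have a vanishing normal derivative somewhere on $\partial B_1$. Because $f\ge 0$ cannot be assumed (nothing in \eqref{optimLinfty24} forces it), I drop positivity of $f_j$. I take $f_j$ of both signs, or supported away from the boundary, so that Hopf fails at some boundary point. For instance, I would choose a non-radial $u_j$ whose boundary normal derivative vanishes at the north pole.

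The plan for the $c$-case is as follows. Fix $\lambda$ large, set $c_\lambda=\lambda^2$ (constant, or $\lambda^2|x|^2$), and let $h_\lambda$ be a positive solution of $-\Delta h+c_\lambda h=0$ in a neighbourhood of $\overline B_1$. For example, take $h=\cosh(\lambda x_1)$-type functions, or the radial solution $h(r)$, which grows like $e^{\lambda r}$. Such $h$ has ratio $\sup/\inf$ about $e^{\lambda}$.

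Next, take $u=h\,\phi$, where $\phi$ is a smooth cutoff-type function. Choose $\phi$ vanishing on $\partial B_1$ to second order near one boundary point $\xi_0$, so that $\inf u/d=0$, and to first order elsewhere. Then compute $f=-\Delta u+cu=-h\Delta\phi-2\nabla h\cdot\nabla\phi$. With $\phi$ supported where $h$ is smallest except for a thin region, $\|f\|_{L^q}$ is of order $\lambda e^{O(1)}$ times the small values of $h$. Meanwhile $\sup u/d$ is attained where $h\sim e^{\lambda}$.

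More carefully, I would take $\phi$ radial-ish with $\partial_\nu\phi$ of order one on the side $x_1\approx 1$, where $h$ is large. But then $f$ is also large there, so this naive choice fails. The right choice is to let $u$ solve the homogeneous equation $-\Delta u+cu=0$ wherever $u/d$ is large, and put $f$ only where $h$ is small. Concretely, take $u=h-\psi$ with $\psi$ compactly supported near the point $-e_1$ where $h$ is minimal. Then $f$ is of order $\inf h$, while $\sup u/d\gtrsim e^{\lambda}\inf h$.

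Boundary vanishing is handled by working on $B_1$ with $h$ the Dirichlet-type construction of Proposition~\ref{Optimality_Harnack}, which I may assume in the form \eqref{optimLinfty23}. Take $u_i$ from Proposition~\ref{Optimality_Harnack}, which has $\sup(u/d)/\inf(u/d)\ge Ce^{CM}$, and set $u_j=u_i-t\,\eta$ with $\eta\ge0$ smooth and $\eta/d$ comparable to $1$ near the boundary point where $u_i/d$ is minimal. Choose $t=\inf (u_i/d)/\sup(\eta/d)$ adjusted so that $\inf(u_j/d)=0$ while $u_j>0$; the positivity needs $\eta$ localized near that point, plus a slight perturbation. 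Then $f_j=-t\ld_j\eta$ has $\|f_j\|_{L^q}\le Ct(1+\|c_j\|)$, with $t\sim\inf(u_i/d)$, and $\sup(u_j/d)\ge \tfrac12\sup(u_i/d)$.

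The final step is to convert $\|c_j\|_{L^\infty}$ into $M_j$ via Proposition~\ref{lemr0ul}, and to absorb polynomial factors in $M_j$ into the exponential. This gives \eqref{optimLinfty24}. The $b$-case is identical using \eqref{optimLinfty1bi}.

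The main obstacle is keeping $u_j>0$ in $B_1$ while forcing $\inf u_j/d=0$. I would handle it by taking $\eta=\theta(x)\,u_i$ near the minimizing boundary point, with a cutoff $\theta\le 1$ equal to $1$ only at that point along the boundary. This makes $u_j=u_i(1-t\theta/\ldots)$ and keeps the $L^q$ norm of the resulting $f_j$ controlled by derivatives of $\theta$ times $u_i$ near that point.
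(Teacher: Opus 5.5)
Your final construction is correct, but it takes a different route from the paper. Write $U_j$ and $\mathcal{L}_j$ for the homogeneous Dirichlet solutions and operators of Proposition~\ref{Optimality_Harnack}; your construction is $u_j=U_j(1-\theta)$, with $0\le\theta<1$ in $B_1$ and $\theta=1$ only at one boundary point $\xi_0$.

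The paper instead writes down new radial solutions:
\begin{itemize}
\item For the potential, it takes $u_j=e^{j\varphi^2}-1$. This is $O(d^2)$ on all of $\partial B_1$. Since $e^{j\varphi^2}$ solves the homogeneous equation, the source is just the potential acting on the constant $1$, so $\|f_j\|_\infty\lesssim j^2$ against $u_j(0)=e^j-1$.
\item For the drift, it takes $u_j=\theta\varphi e^{jh}$ with a radial cutoff satisfying $\theta(1)=\theta'(1)=0$. The $\theta$-derivative terms live where $h\le 1/2$ and give $|f_j|\le Ce^{j/2}$. The price is that $b_j$ must be redefined and re-estimated with $\theta\varphi$ in place of $\varphi$.
\end{itemize}

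In your version the operators are unchanged, so $f_j=U_j\Delta\theta+2\nabla U_j\cdot\nabla\theta$, plus $U_jb_j\cdot\nabla\theta$ in the drift case. This buys three things:
\begin{itemize}
\item both cases are handled by one argument;
\item $M_j\to\infty$ and $M_j\le Cj$ are inherited for free;
\item in the drift case, taking $\mathrm{supp}\,\theta\subset\{|x|>2/3\}$ (where $b_j=0$ and $U_j=\varphi$) makes $f_j$ independent of $j$.
\end{itemize}
The cost is non-radial solutions, plus three points your sketch leaves loose.

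First, keep only the $U_j(1-\theta)$ version. Your earlier recipe $t=\inf(U_j/d)/\sup(\eta/d)$ with a fixed $\eta$ needs $U_j/d$ to approach its infimum at $\xi_0$. That fails, for example, in the drift case with $n=1$: there $U_j=\varphi$ near $\partial B_1$, and $\varphi/d=\sin(\pi s/2)/s$ (with $s=1-|x|$) is smaller inside than at the boundary, so $\inf(u_j/d)>0$.

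Second, the ratio $\sup(U_j/d)/\inf(U_j/d)\ge Ce^{CM_j}$ used as a black box does not control $U_j$ and $\nabla U_j$ on $\mathrm{supp}\,\theta$. You need the explicit formulas. For the potential, $U_j=\varphi e^{j\varphi^2}$, so you must place $\mathrm{supp}\,\theta$ inside, say, $\{\varphi^2\le 1/2\}$ to get $\|f_j\|_\infty\le C(1+j)e^{j/2}$ against $u_j(0)=e^j$.

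Third, Proposition~\ref{lemr0ul}(i) only gives $M_j\le C\|c_j\|_\infty^{1/2}$. So it cannot be used to absorb powers of $\|c_j\|_\infty$ into $e^{CM_j}$. Argue in terms of $j$ instead: $\sup(u_j/d)/\|f_j\|_{L^q}\ge ce^{j/4}$, and then use $j\ge M_j/C$.
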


We will  use the following simple lemma.

\begin{lem} \label{Mitoinfty}
Let $R,\eta>0$ and let $\ld_i$ be a sequence of operators of the form \eqref{defdiv}.
Assume that
$\displaystyle\lim_{i\to\infty}\,\Bigl\{\sup_{x\in\overline B_R} \inf_{B_\eta(x)\cap B_R} |c_i|\Bigr\}=\infty$
(or the same for $|b_i|$ instead of $|c_i|$).
Then $\displaystyle\lim_{i\to\infty} M(\ld_i,B_R)=\infty$.
\end{lem}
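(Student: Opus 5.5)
We argue by contradiction, using the relation $r_0^{-1}=r^{-1}_\Omega+M$ from Proposition~\ref{basicr0}. Set $r_i=r_0(\ld_i,B_R)$ and $M_i=M(\ld_i,B_R)$. Suppose that, along a subsequence, $M_i\le K<\infty$. Then $r_i^{-1}\le r_{B_R}^{-1}+K$, so $r_i\ge \bar r:=(r_{B_R}^{-1}+K)^{-1}>0$ uniformly in $i$. Replacing $\bar r$ by $\min(\bar r,\eta)$, we may assume $\bar r\le\eta$.

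Next we use the hypothesis. For each $i$ pick $x_i\in\overline B_R$ with
\[
\inf_{B_\eta(x_i)\cap B_R}|c_i|\ge m_i\to\infty .
\]
The set $B_R\cap B_{\bar r}(x_i)$ contains a portion of $B_R$ of measure at least $c(n)\bar r^{\,n}$; this holds because $\bar r\le r_{B_R}\le c(n)R$ and $x_i\in\overline B_R$, so the ball is not too thin near the boundary. Since $B_{\bar r}(x_i)\subset B_\eta(x_i)$, we have $|c_i|\ge m_i$ on this portion. Hence
\[
\|c_i\|_{q,r_i,B_R}\ge \|c_i\|_{L^q(B_R\cap B_{\bar r}(x_i))}\ge m_i\,(c(n)\bar r^{\,n})^{1/q}\to\infty .
\]
Here we used that $\|\cdot\|_{q,r,B_R}$ is nondecreasing in $r$ and that $r_i\ge\bar r$. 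For $q=\infty$ the same bound holds trivially. Therefore $M_i\ge\|c_i\|^{\gamma_q}_{q,r_i,B_R}\to\infty$, which contradicts $M_i\le K$. The case of $b_i$ is identical, with $\beta_q$ in place of $\gamma_q$; the drift $b_2$ enters $M$ via $\|b_2\|^{\beta_q}$, and if $b$ is a divergence-type coefficient $b_1$, the term $\|b_1\|_{L^\infty}$ already diverges.

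The only mildly delicate point is the lower bound on $|B_R\cap B_{\bar r}(x_i)|$ for $x_i$ near the boundary. This is elementary for balls: the intersection contains a ball of radius $\bar r/4$ whose center lies on the segment from $x_i$ toward the origin, since $\bar r\le R$. The key observation is that boundedness of $M$ prevents $r_i$ from degenerating, which is exactly what makes the uniformly local norm see the large values of $c_i$.
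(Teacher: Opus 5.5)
Your proof is correct and follows essentially the same route as the paper's: bounded $M_i$ forces $\inf_i r_i>0$ via \eqref{relMr0}, and then with $\rho=\min(\bar r,\eta)$ the uniformly local norm gives $\|c_i\|_{q,r_i,B_R}\ge c(n)\rho^{n/q}\sup_{x}\inf_{B_\eta(x)\cap B_R}|c_i|\to\infty$. Your extra justification of $|B_R\cap B_\rho(x)|\ge c(n)\rho^n$ and your remark on the $b_1$ case fill in details the paper leaves implicit.
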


\begin{proof}
Set $M_i=M(\ld_i,B_R)$ and $r_i:=r_0(\ld_i,B_R)$.
If the conclusion fails we may assume that $\sup_i M_i<\infty$.
By \eqref{relMr0} it follows that $\bar r:=\inf_i r_i>0$.
Therefore, setting $\rho=\min(\bar r,\eta)$, we deduce that
$$M_i^{q/\gamma_q}\ge \|c_i\|^q_{q,r_i,B_R}\ge \sup_{x\in\overline B_R} \int_{B_\rho(x)\cap B_R} |c_i|^q\,dx
\ge c(n)\rho^n \sup_{x\in\overline B_R} \inf_{B_\eta(x)\cap B_R} |c_i|^q\to\infty,$$
a contradiction. The same argument applies in the case of $b_i$.
\end{proof}

\begin{proof}

It suffices to assume $R=1$.
Indeed, as in the proof of Proposition~\ref{Prop-Optim-Hopf0} above, if $\tilde u_j(x)=u_j(y)=u_j(Rx)$, by the properties \eqref{scaleinvtilde4}-\eqref{scaleinvtilde5}
and  $Rd_1(x)=d_R(Rx)$, we  have
$$R\sup_{y\in B_R}\frac{u_j(y)}{d_R(y)}=\sup_{x\in B_1}\frac{\tilde u_j(x)}{d_1(x)}\ge C\exp(C\tilde M_j) \|\tilde f_j\|_{q,\tilde r_j,B_1}
=C\exp(CM_jR) R^{2-n/q}\|f_j\|_{q,r_j,B_R}.$$

Let $\lambda_0>0$ be the first eigenvalue of $-\Delta$ in $B_1$ with Dirichlet boundary conditions
and $\varphi(x)=\tilde\varphi(r)\in C^\infty(\overline B_1)$ be the corresponding (radial) eigenfunction with $\varphi(0)=1$.

\smallskip

{\bf Case 1:} Problem \eqref{optimLinfty1d}.
Let $h=\varphi^2$, $u_j=e^{j h}-1$, $c_j=-j\Delta h-j^2|\nabla h|^2$ and $f_j=c_j$. Then
$$
-\Delta u_j=
-\bigl[j\Delta h+j^2|\nabla h|^2\bigr]e^{j h}=c_ju_j+f_j.
$$
Since $u_j=e^{j h}-1\sim j \varphi^2=O(d^2)$ as $x\to\partial B_1$, we have $\inf_{B_1}(u_j/d)=0$.
On the other hand, $\sup_{B_1}(u_j/d)\ge u_j(0)=e^j-1$.
Since
{$M_j\le C \|c_j\|_\infty^{1/2} \le Cj$ owing to \eqref{culinfty}, we get
$$\bigl(\|f_j\|_{L^q(B_1)}\bigr)^{-1}\sup_{B_1}(u_j/d)=\bigl(\|c_j\|_{L^q(B_1)}\bigr)^{-1}\sup_{B_1}(u_j/d)\ge j^{-2}(e^j-1)\ge Ce^{j/2}\ge Ce^{CM_j}.$$
Since $M_j\to\infty$ by Lemma~\ref{Mitoinfty}}, the conclusion
follows.

\smallskip

{\bf Case 2:} Problem \eqref{optimLinfty1db}. This case is more delicate.
Let $h(x)=\tilde h(r)$ and $\theta(x)=\tilde\theta(r)$ be radial nonincreasing $C^\infty$ functions, such that
$$\hbox{$\tilde\theta(r)=1$ for $r\le 1/2$, \ \ $\tilde\theta^\prime(r)<0$ for $1/2<r<1$,\ \ $\tilde\theta(1)=\tilde\theta^\prime(1)=0$,}$$
$$\hbox{$\tilde h(r)=1$ for $r\le 1/3$, \ \ $\tilde h(r)=0$ for $2/3\le r\le 1$, \ \ $h(1/2)=1/2$}, \quad\mbox{and}$$
\be \label{optimLogg-nabla0}
\hbox{$\tilde h'(r)<-1/10$ for $3/8\le r\le 5/8$}.
\ee
For any given integer $j\ge 1$, we set
$$u_j:=\theta\varphi e^{j h},$$
which in particular satisfies $u_j=0$ on $\partial B_1$ and $u_j>0$ in $B_1$.
We compute
\be \label{optimLogg-nabla1}
\nabla u_j=\bigl[\nabla(\theta\varphi)+j\theta\varphi\nabla h\bigr]e^{j h},
\ee
$$\Delta u_j
=\bigl[\Delta(\theta\varphi)+2j\nabla(\theta\varphi)\cdot\nabla h+\theta\varphi(j\Delta h+j^2|\nabla h|^2)\bigr]e^{j h}.$$
Setting $f_j=-\bigl(2\nabla\theta\cdot\nabla\varphi+\varphi\Delta\theta\bigr)e^{j h}$ and using $\Delta(\theta\varphi)=-\lambda_0\theta\varphi+2\nabla\theta\cdot\nabla\varphi+\varphi\Delta\theta$,
we get
\be \label{optimLogg-nabla11}
\Delta u_j+\lambda_0 u_j+f_j
=\bigl[2j\nabla(\theta\varphi)\cdot\nabla h+\theta\varphi(j\Delta h+j^2|\nabla h|^2)\bigr]e^{j h(x)}.
\ee
Also we observe that
\be \label{optimLogg-nabla11a}
\inf_{B_1}\frac{u_j}{d}\le\lim_{|x|\to 1}\frac{u_j}{d}=|(\tilde\theta\tilde\varphi)'(1)|=0
\ee
and
$$\sup_{B_1}\frac{u_j}{d}\ge \frac{u_j}{d}(0)=e^j,
\qquad |f_j|\le C\chi_{\{1/2\le |x|\le 1\}}e^{j h(x)}\le Ce^{j/2},$$
hence
\be \label{optimLogg-nabla11b}
\sup_{B_1}\frac{u_j}{d}\ge Ce^{j/2}\|f_j\|_\infty.
\ee
Now, since $\varphi$ is radially strictly decreasing and  $\theta$, $h$ are also radially decreasing, we have
\be\label{gradsy}
|\nabla(\theta\varphi)+j\theta\varphi\nabla h|
\ge \max\{|\nabla(\theta\varphi)|, j|\theta\varphi\nabla h|\}\ge \theta|\nabla\varphi|>0, \quad \mbox { for }0<|x|<1.
\ee
We may then set
\be \label{Deltaujbj21}
b_j:=\frac{\nabla(\theta\varphi)+j\theta\varphi\nabla h}{|\nabla(\theta\varphi)+j\theta\varphi\nabla h|^2}
\bigl[2j\nabla(\theta\varphi)\cdot\nabla h+\theta\varphi(j\Delta h+j^2|\nabla h|^2)\bigr],\quad x\in B_1\setminus\{0\}.
\ee
Since $h$ is constant, hence $b_j=0$, in $B_{1/3}$ and in $\overline B_1\setminus B_{2/3}$, the function $b_j$ extends to a function $b_j\in C^\infty(\overline B_1)$,
and \eqref{optimLogg-nabla1}, \eqref{optimLogg-nabla11} and \eqref{Deltaujbj21} imply
$$-\Delta u_j-\lambda_0 u_j+b_j\cdot\nabla u_j=f_j.$$
Moreover, since $h=$const in $B_{1/3}$ and $B_1\setminus B_{2/3}$, by \eqref{gradsy} we obtain
$$\begin{aligned}|b_j|
&=\frac{\bigl|2j\nabla(\theta\varphi)\cdot\nabla h+\theta\varphi(j\Delta h+j^2|\nabla h|^2)\bigr|}{|\nabla(\theta\varphi)+j\theta\varphi\nabla h|}
\le j\frac{\bigl|2\nabla(\theta\varphi)\cdot\nabla h+\theta\varphi\Delta h\bigr|}{|\nabla(\theta\varphi)|}
+\frac{j^2\theta\varphi|\nabla h|^2}{j|\theta\varphi\nabla h|}\\
&\le j\frac{\bigl|2\nabla(\theta\varphi)\cdot\nabla h+\varphi\Delta h\bigr|}{|\theta\nabla \varphi|}+j|\nabla h|\le j\frac{C}{\inf_{1/3\le|x|\le 2/3}\theta|\nabla \varphi|}+Cj \le Cj,
\end{aligned}$$
hence
$M_j\le C\|b_j\|_\infty \le Cj$ owing to \eqref{culinfty}.
Since also by~\eqref{optimLogg-nabla0}, for large $j$
$$
|b_j(x)|\ge \frac{Cj^2|\nabla h|^2- Cj}{C(1+j)}\ge Cj\quad\mbox{ for } \;3/8\le |x|\le 5/8,$$
 we have $M_j\to\infty$ by Lemma~\ref{Mitoinfty}.
This along with \eqref{optimLogg-nabla11a}-\eqref{optimLogg-nabla11b}
yields the conclusion.
\end{proof}

We next turn to the proof of Proposition~\ref{Optimality_Harnack}.

\begin{proof}[Proof of Proposition~\ref{Optimality_Harnack}]
Again it suffices to consider the case $R=1$.
Let $\lambda_0>0$ be the first eigenvalue of $-\Delta$ in $B_1$ with Dirichlet boundary conditions
and $\varphi(x)=\tilde\varphi(r)\in C^\infty(\overline B_1)$ be the corresponding (radial) eigenfunction with $\varphi(0)=1$.
Let $h\ge 0$ be a radial nonincreasing $C^\infty$ function to be fixed below, such that
$h(0)=1$, $h(x)=0$ for $|x|=1$ and
$|\nabla h|\ge c_0>0$ in a neighborhood of $x=1/2$.
For any given integer $j\ge 1$, we set
$$u_j(x):=\varphi(x)e^{j h(x)},$$
which in particular satisfies $u_j=0$ on $\partial B_1$ and $u_j>0$ in $B_1$.
We compute
\be \label{optimLogg-nabla}
\nabla u_j=\bigl[\nabla\varphi+j\varphi\nabla h\bigr]e^{j h(x)},
\ee
$$\begin{aligned}
\Delta u_j
&=\bigl[\Delta\varphi+2j\nabla\varphi\cdot\nabla h+\varphi(j\Delta h+j^2|\nabla h|^2)\bigr]e^{j h(x)}\\
&=\bigl[2j\nabla\varphi\cdot\nabla h+\varphi(j\Delta h+j^2|\nabla h|^2-\lambda_0)\bigr]e^{j h(x)}.
\end{aligned}$$
On the other hand, since $\varphi, h$ are radially decreasing, we have
$$\frac{|\nabla u_j|}{u_j}=|j\nabla h+\varphi^{-1}\nabla\varphi|\ge j |\nabla h|,$$
hence there exists a constant $C_0>0$ such that
\be \label{optimLogg}
\frac{|\nabla u_j(x)|}{u_j(x)}\ge C_0j\quad\hbox{on $\{|x|=1/2\}$}.
\ee
Moreover, $\frac{u_j}{d}(0)=e^j$ and $\displaystyle\lim_{|x|\to 1}\frac{u_j}{d}=|\tilde\varphi'(1)|>0$ by the Hopf-Oleinik lemma applied to $\varphi$,
hence
\be \label{optimBHI}
\frac{\sup_{B_1}(u_j/d)}{\inf_{B_1}(u_j/d)}\ge|\tilde\varphi'(1)|^{-1}e^j.
\ee

$\bullet$ For problem \eqref{optimLinfty1ci}, we choose $h=\varphi^2$, hence
$$\Delta u_j
=\varphi\bigl[4j|\nabla\varphi|^2+j\Delta h+j^2|\nabla h|^2-\lambda_0\bigr]e^{j h(x)},$$
so if we set
$$c_j:=\frac{-\Delta u_j}{u_j}=\lambda_0-4j|\nabla\varphi|^2-j\Delta h-j^2|\nabla h|^2,$$
we have $\Delta u_j+c_ju_j=0$.
There exist $C_1, C_2>0$ such that $C_1j^2\le\|c_j\|_\infty\le C_2j^2$ for all sufficiently large $j$.
This along with \eqref{optimLogg}-\eqref{optimBHI} and Proposition~\ref{lemr0ul}(i) yields \eqref{optimLinfty23}.
\smallskip

$\bullet$ For problem \eqref{optimLinfty1bi}, we take $h$ a radial decreasing $C^\infty$ function such that $h=1$ for $0\le |x|\le 1/3$
and $h=0$ for $2/3\le |x|\le 1$ and $|\nabla h|>C_0$ for $x=1/2$. We have
\be \label{Deltaujbj1}
\Delta u_j+\lambda_0 u_j=
\bigl[2j\nabla\varphi\cdot\nabla h+\varphi(j\Delta h+j^2|\nabla h|^2)\bigr]e^{j h(x)}.
\ee
Similarly to \eqref{gradsy}, since $\varphi$ is strictly radially decreasing and $h$ is radially decreasing, we have $|\nabla\varphi+j\varphi\nabla h|
\ge \max\{|\nabla\varphi|, j|\varphi\nabla h|\}>0$ in $\overline B_1\setminus\{0\}$.
We may then set
\be \label{Deltaujbj2}
b_j=-\frac{\nabla\varphi+j\varphi\nabla h}{|\nabla\varphi+j\varphi\nabla h|^2}
\bigl[2j\nabla\varphi\cdot\nabla h+\varphi(j\Delta h+j^2|\nabla h|^2)\bigr],\quad x\in\overline B_1\setminus\{0\}.
\ee
Since $h=1$, hence $b_j=0$, in $B_{1/3}$, the function $b_j$ extends to a function $b_j\in C^\infty(\overline B_1)$,
and \eqref{optimLogg-nabla}, \eqref{Deltaujbj1} and \eqref{Deltaujbj2} imply
$$\Delta u_j+b_j\cdot\nabla u_j+\lambda_0 u_j=0.$$
Moreover,
$$\begin{aligned}|b_j|
& =\frac{\bigl|2j\nabla\varphi\cdot\nabla h+\varphi(j\Delta h+j^2|\nabla h|^2)\bigr|}{|\nabla\varphi+j\varphi\nabla h|}
\le j\frac{\bigl|2\nabla\varphi\cdot\nabla h+\varphi\Delta h\bigr|}{|\nabla\varphi|}
+\frac{j^2\varphi|\nabla h|^2}{j|\varphi\nabla h|}\\
&\le j\frac{\bigl|2\nabla\varphi\cdot\nabla h+\varphi\Delta h\bigr|}{\min_{1/3\le|x|\le1}|\nabla\varphi|}+j|\nabla h|\le Cj.
\end{aligned}$$
We have $M_j\to\infty$ as in the previous proof. This along with \eqref{optimLogg}-\eqref{optimBHI} and Proposition~\ref{lemr0ul}(i)
yields \eqref{optimLinfty23}.
\end{proof}

We now turn to the partial optimality of the differential Harnack estimate \eqref{concllgeu2} in the inhomogeneous case $f\ne 0$.

First of all, as mentioned in Remark~\ref{remloggrad1d2}, \eqref{concllgeu2} is false without the assumption $f\ge 0$, even for $n=1$.
It suffices to consider for instance the functions $u,f\in C^\infty([0,1])$, with $u>0$ in $(0,1)$, given by
\be\label{counterfpositive}
u(x)=\exp(-x^{-1}),\qquad f(x):=-x^{-3}\bigl(2+x^{-1}\bigr)\exp(-x^{-1}),
\ee
which satisfy
$-u''=f$ in $(0,1)$, whereas $\frac{x|u'|}{u}=x^{-1}$ is unbounded.

Next, we have the following proposition which shows that, in the case $n=2$, for any $p>1$ estimate \eqref{concllgeu2} fails if $\|f\|_{L^1_d}$
is replaced by $\|f\|_{L^p_d}$ and $q$ is strictly larger but close to~$n$.
In other words, in this situation $C^1$-estimates are available but our logarithmic gradient estimate fails.
In dimension $n\ge 3$, we have the same conclusion only for $p>n/2$, so that the optimality
of the $L^1_d$ norm in \eqref{concllgeu2} remains unclear in this case.

\begin{prop} \label{PropLoggradOptimality}
Let $n\ge 2$ and $\frac{n}{2}<p<n<q<\frac{np}{n-p}$.
There exists a sequence of functions $f_k\in C^\infty(\overline B_1)$
with $f_k>0$ in $\overline B_1$, such that the solutions $u_k>0$ of
$$
-\Delta u_k=f_k \;\mbox{ in } B_1,\qquad
u_k=0  \;\mbox{ on } \partial B_1,
$$
satisfy, for all $k\ge 1$,
$$\sup_{B_1}\frac{d|\nabla u_k|}{u_k} \ge k\Bigl(1+\frac{\|f_k\|_{L^q(B_1)}}{\|f_k\|_{L^p_d(B_1)}}\Bigr).$$
\end{prop}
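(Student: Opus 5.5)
We will take $f_k$ radial, equal to a constant background plus a tall bump concentrated near the origin. The point is that near the bump $u_k$ looks like a multiple of the fundamental solution, so $|\nabla u_k|/u_k$ is of order $\varepsilon^{-1}$ at distance $\sim\varepsilon$ from the centre, where $d\sim 1$. Meanwhile the ratio $\|f\|_{L^q}/\|f\|_{L^p_d}$ only grows like $\varepsilon^{n/q-n/p}$. The hypothesis $q<np/(n-p)$ is exactly $n/p-n/q<1$, so this growth is strictly slower than $\varepsilon^{-1}$.

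\emph{Construction.} Fix a smooth radial $g\ge 0$ supported in $B_1$ with $\int g=1$. For $\varepsilon\in(0,1/4)$ and $A>0$ set
\[
f=1+A\varepsilon^{-n}g(x/\varepsilon)>0 .
\]
Since $f$ is radial, the solution of $-\Delta u=f$, $u=0$ on $\partial B_1$, is radial and explicit:
\[
u'(r)=-r^{1-n}\int_0^r s^{n-1}\tilde f(s)\,ds, \qquad u(r)=\int_r^1|u'(s)|\,ds .
\]
For $r\ge\varepsilon$ the bump contributes its full mass, so
\[
|u'(r)|\ge c_nA\,r^{1-n} \qquad\text{on } [\varepsilon,1].
\]
For $r\in[2\varepsilon,1/2]$ we also get the upper bound
\[
u(r)\le C+C\,A\,\Gamma(r),\qquad \Gamma(r)=r^{2-n}\ (n\ge3),\quad \Gamma(r)=\log(1/r)\ (n=2).
\]

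\emph{Lower bound on the log-gradient.} At $|x|=2\varepsilon$ we have $d(x)\ge1/2$. Choose $A$ so large that $A\,\Gamma(2\varepsilon)\ge1$. Then
\[
\frac{d|\nabla u|}{u}(x)\ \ge\ \frac{cA\varepsilon^{1-n}}{CA\,\Gamma(2\varepsilon)}
\ \ge\ \begin{cases} c\,\varepsilon^{-1}, & n\ge3,\\ c\,\bigl(\varepsilon\log(1/\varepsilon)\bigr)^{-1}, & n=2.\end{cases}
\]

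\emph{Norms.} Since $d\sim1$ on $B_\varepsilon$,
\[
\|f\|_{L^q}\le C\bigl(1+A\varepsilon^{n/q-n}\bigr),\qquad
\|f\|_{L^p_d}\ge c\bigl(1+A\varepsilon^{n/p-n}\bigr).
\]
Take $A$ large enough that $A\varepsilon^{n/p-n}\ge1$ as well (for instance $A=\varepsilon^{-n}$). Then
\[
\frac{\|f\|_{L^q}}{\|f\|_{L^p_d}}\le C\varepsilon^{n/q-n/p}.
\]

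\emph{Conclusion.} It remains to check
\[
\varepsilon^{-1}\big/\bigl(1+\varepsilon^{-(n/p-n/q)}\bigr)\to\infty \qquad (\text{with an extra } 1/\log(1/\varepsilon) \text{ when } n=2).
\]
This holds because $0<n/p-n/q<1$. Choosing $\varepsilon=\varepsilon_k\to0$ appropriately gives the claimed inequality with factor $k$. Positivity and smoothness of $f_k$ on $\overline B_1$ are built into the construction.

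The main care point is the upper bound for $u(2\varepsilon)$. It should be obtained from the explicit radial formula by integrating $|u'|$ from $2\varepsilon$ to $1$. The background term contributes $O(1)$ and the bump contributes $O(A\,\Gamma(2\varepsilon))$, so no Green function estimates are needed. I also expect to check whether the hypothesis $p>n/2$ is actually used in this route. It may only be needed to ensure $p<n$ and that the range of $q$ is nonempty, in which case it is not essential here, but I would verify this rather than assume it.
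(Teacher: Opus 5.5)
Your argument is correct, and it uses a different test family from the paper's.

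\textbf{The paper's construction.} The paper takes
\[
u_\eps=(1+\eps)^\gamma-(r^2+\eps)^\gamma
\]
with a small auxiliary exponent $\gamma\in(0,1)$. Then $f=-\Delta u_\eps$ is comparable to $(r^2+\eps)^{\gamma-1}$, a smoothed version of the integrable singularity $|x|^{2\gamma-2}$ at the length scale $\sqrt{\eps}$. Since $u_\eps\le 2$ uniformly, the log-gradient at $|x|=\eps^{1/2}$ is bounded below simply by $|\nabla u_\eps|\sim\eps^{\gamma-\frac12}$. The norm ratio is $\|f\|_{L^q}/\|f\|_{L^p_d}\le C\eps^{-\frac n2(\frac1p-\frac1q)}$. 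This requires tuning $\gamma$ twice: $\gamma<1-\frac{n}{2q}$, so that the $L^q$ integral is dominated by the core, and $\frac n2(\frac1p-\frac1q)<\frac12-\gamma$.

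\textbf{Your construction.} You use a constant background plus an approximate Dirac mass, so $u$ behaves like a large multiple of the Green function near the origin and is unbounded. This forces you to bound $u(2\eps)$ from above by integrating $|u'|$; you do this correctly, getting $O(1)+O(A\,\Gamma(2\eps))$. It also costs a harmless logarithm when $n=2$. In exchange you get the full rate $\eps^{-1}$ at your concentration scale $\eps$, whereas the paper gets only $(\sqrt{\eps})^{2\gamma-1}$ at its scale $\sqrt{\eps}$, which is why it needs $\gamma$ small. Your norm ratio $\eps^{\frac nq-\frac np}$ follows from pure scaling, with no splitting of integrals. Note also that $f\ge 1$ makes your lower bound $\|f\|_{L^p_d}\ge c\,(1+A\eps^{\frac np-n})$ immediate.

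\textbf{Role of the hypotheses.} In both routes the only condition actually used is $\frac np-\frac nq<1$, i.e.\ $q<\frac{np}{n-p}$. As you suspected, $p>\frac n2$ serves only to make the interval $(n,\frac{np}{n-p})$ nonempty. Indeed, for $p\le\frac n2$ and $q>n$ one has $\frac np-\frac nq>2-1=1$, and both constructions break down.
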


\begin{proof} [Proof of Proposition~\ref{PropLoggradOptimality}]
Let $\eps>0$. Define
$$\phi(r)=(r^2+\eps)^\gamma,\quad u(x)=u_\eps(x)=(1+\eps)^\gamma-\phi(r)$$
with $\gamma\in(0,1)$ to be chosen below. For $r>0$, we compute
\be\label{computphiprime}
\phi'=2\gamma r(r^2+\eps)^{\gamma-1},
\ee
$$\begin{aligned}
\phi''
&=2\gamma (r^2+\eps)^{\gamma-1}+4\gamma(\gamma-1)r^{2}(r^2+\eps)^{\gamma-2}\\
&=2\gamma (r^2+\eps)^{\gamma-2}\bigl[(r^2+\eps)+2(\gamma-1)r^2\bigr].
\end{aligned}$$
Therefore,
$$\begin{aligned}
f:=-\Delta u
&=\phi''+(n-1)r^{-1}\phi'
=2\gamma (r^2+\eps)^{\gamma-2}\bigl[n(r^2+\eps)+2(\gamma-1) r^2\bigr] \\
&=2\gamma (r^2+\eps)^{\gamma-2}\bigl[(n-2+2\gamma)r^2+n\eps],
\end{aligned}$$
hence
\be\label{computf}
2\gamma n (r^2+\eps)^{\gamma-1}\ge f\ge 4\gamma(r^2+\eps)^{\gamma-2}\bigl[\gamma r^2+\eps\bigr]\ge 4\gamma^2(r^2+\eps)^{\gamma-1}>0.
\ee
Let $p\ge 1$ and
$\eps\in(0,1/4)$.
Denoting by $C$ a generic positive constant depending only on $p$ and $\gamma$, we compute
$$
\|f\|^p_{L^p_d}\ge C \int_0^1 (r^2+\eps)^{(\gamma-1)p}r^{n-1}(1-r)dr
\ge C\eps^{(\gamma-1)p} \int_0^{\sqrt{\eps}} r^{n-1}dr
= C\eps^{(\gamma-1)p+n/2}
$$
hence
$\|f\|_{L^p_d}\ge C\eps^{\gamma-1+\frac{n}{2p}}$.
Also, if $q\ge 1$ and ${2(\gamma-1)q+n}<0$, then
$$\begin{aligned}
\|f\|_q^q
&\le C \int_0^1 (r^2+\eps)^{(\gamma-1)q}r^{n-1}dr\\
&{\le C\eps^{(\gamma-1)q} \int_0^{\sqrt{\eps}} r^{n-1}dr
+C \int_{\sqrt{\eps}}^1 r^{2(\gamma-1)q+n-1}dr\le C\eps^{(\gamma-1)q+n/2}},
\end{aligned}$$
hence $\|f\|_q\le C\eps^{\gamma-1+\frac{n}{2q}}$,
so that
$$\frac{\|f\|_q}{\|f\|_{L^p_d}}
\le C \eps^{-\frac{n}{2}(\frac{1}{p}-\frac{1}{q})}.$$
But on the other hand, by \eqref{computphiprime}, we have
$$\sup_{B_1}\frac{d|\nabla u|}{u}\ge \Bigl[\frac{d|\nabla u|}{u}\Bigr]_{|x|=\eps^{1/2}} \ge C|\nabla u|_{|x|=\eps^{1/2}}
\ge C\eps^{\gamma-\frac12}.$$
By our assumption, which implies $\frac{1}{p}-\frac{1}{q}<\frac{1}{n}$,
we may choose $\gamma>0$ small such that $0<\gamma<1-\frac{n}{2q}$ and
$\frac{n}{2}(\frac{1}{p}-\frac{1}{q})<\frac12-\gamma$, hence
$\eps^{-\frac{n}{2}(\frac{1}{p}-\frac{1}{q})}\ll \eps^{\gamma-\frac12}$ as $\eps\to 0$.
\end{proof}

\section{Appendix} \label{sec-app}

In this appendix we state and/or prove a number of auxiliary or technical results that we have used, and which
were postponed in order not to interrupt the main line of arguments.

\subsection{First eigenvalue}
Let $\Omega$ be a bounded
domain of $\rn$. Using the weak version of the Krein-Rutman theorem (see for instance \cite[Proposition 5.4.32]{DM}), it was proved in \cite{Ch1}, \cite{Ch2}, that any operator $\ld$ satisfying  \eqref{hyp1}, $b_1,b_2\in L^q(\Omega)$, $c,f\in L^{q/2}(\Omega)$, $q>n$,  possesses a first eigenvalue $\lambda_1=\lambda_1(-\ld,\Omega)$ (for the reader's convenience we note that what we call $\lambda_1$ is $-\lambda_1$ in \cite{Ch1}, \cite{Ch2}). This eigenvalue has the usual basic properties, specifically, it is proved in these works that  $\lambda_1$ is a simple eigenvalue, has smallest real part among all eigenvalues, decreases (strictly) with respect to the domain, and corresponds to a positive eigenfunction $\varphi_1\in H^1_0(\Omega)$. We have the characterization
\begin{equation}\label{charlambda1}
\lambda_1=\mathrm{sup}\{\lambda>0\::\: \mbox{there exists } w\in H^1(\Omega),\; w>0,\;  (\ld+\lambda)w\le0 \mbox{ in }\Omega\}.
\end{equation}
Furthermore the validity of the maximum principle for $\ld$ in $\Omega$ is equivalent to the existence of a nonnegative solution $w$ of
$\ld w<0$
 (or nontrivial nonnegative solution of
 $\ld w\le 0$)
 and hence  {\it the positivity of $\lambda_1$ is equivalent to the validity of the maximum principle for $\ld$ in~$\Omega$}.  The latter in turn easily implies that $\lambda_1>0$ guarantees the general solvability of the Dirichlet problem for \eqref{defdiv}  (see for instance the beginning of the proof of Proposition 4.1 in \cite{SS2}). We also know that $\varphi_1\in C^\alpha(\Omega)$, by \cite[Theorem 8.29]{GT} and the remark at the end of \cite[Section 8.10]{GT}. Under \eqref{hyp1}-\eqref{hyp2}, $\varphi_1\in C^{1,\alpha}(\Omega)$, by \cite[Section 8.11]{GT}}, \cite[Chapter 5.5]{Mo}.

The following proposition gives standard upper and lower  bounds on the first eigenvalue, which are used in the proof of
the Morel-Oswald estimate.

\begin{prop}\label{lowerbdeig}
Assume \eqref{hyp1}-\eqref{hyp2}, with $\Omega=B_\rho$ for $\rho>0$. We have
\begin{equation}\label{bdseig}
c_0|\Omega|^{-2/n} - C_0(\|b_1\|_{L^q(\Omega)}^{2\beta_q} + \|b_2\|_{L^q(\Omega)}^{2\beta_q} +\|c\|_{L^{q/2}(\Omega)}^{\beta_q})\le \lambda_1(-\ld, \Omega) \le \overline{C_0} ,
\end{equation}
 where $c_0, C_0>0$ depend on $n,\lambda,q$;  and $\overline{C_0}$ depends on $n,\lambda, \Lambda, q,\rho$, and upper bounds for $\|A\|_{C^\alpha(\Omega)}$, $\|b_1\|_{L^q(\Omega)} $, $\|b_2\|_{L^q(\Omega)} $, $\|c\|_{L^{q/2}(\Omega)}$.
\end{prop}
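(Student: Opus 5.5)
We prove the two inequalities in \eqref{bdseig} separately. For the lower bound we use the characterization via the Rayleigh-type energy estimate. Let $\varphi_1>0$ be the principal eigenfunction of $-\ld$ in $\Omega=B_\rho$, normalized by $\|\varphi_1\|_{L^2}=1$. Testing the eigenvalue equation $-\ld\varphi_1=\lambda_1\varphi_1$ with $\varphi_1$ itself, we get
\[
\lambda_1=\int_\Omega A\nabla\varphi_1\cdot\nabla\varphi_1+\int_\Omega b_1\varphi_1\cdot\nabla\varphi_1-\int_\Omega \varphi_1 b_2\cdot\nabla\varphi_1-\int_\Omega c\varphi_1^2 .
\]
The principal eigenvalue is real and the eigenfunction can be taken real, by \cite{Ch1,Ch2}. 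The first term is at least $\lambda\|\nabla\varphi_1\|_2^2$. We bound the drift terms by H\"older: $\|b_i\|_{L^q}\|\varphi_1\|_{L^{2q/(q-2)}}\|\nabla\varphi_1\|_2$. Since $q>n$, we have $2q/(q-2)<2^*$ (any finite exponent if $n\le2$). We then interpolate with Gagliardo--Nirenberg,
\[
\|\varphi_1\|_{L^{2q/(q-2)}}\le C\|\nabla\varphi_1\|_2^{n/q}\|\varphi_1\|_2^{1-n/q},
\]
which holds for $H^1_0$ functions with a constant independent of the domain. Young's inequality then bounds each drift term by $\frac{\lambda}{4}\|\nabla\varphi_1\|_2^2+C\|b_i\|_q^{2/(1-n/q)}$, and $2/(1-n/q)=2\beta_q$. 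Similarly,
\[
\int|c|\varphi_1^2\le\|c\|_{q/2}\|\varphi_1\|_{L^{2q/(q-2)}}^2\le \frac{\lambda}{4}\|\nabla\varphi_1\|_2^2+C\|c\|_{q/2}^{\beta_q},
\]
with the exponent coming from the $\|\nabla\varphi_1\|_2^{2n/q}$ factor. Finally, Faber--Krahn (or Poincar\'e) gives $\frac{\lambda}{4}\|\nabla\varphi_1\|_2^2\ge c_0|\Omega|^{-2/n}$, which yields the left inequality.

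For the upper bound we use the characterization \eqref{charlambda1} in dual form: it suffices to find $\mu=\overline{C_0}$ such that no positive supersolution $w$ of $(\ld+\mu)w\le0$ exists in $B_\rho$. Equivalently, we show directly that $\lambda_1\le\overline{C_0}$. Take $\psi$ the positive principal eigenfunction of the adjoint on $B_{\rho/2}$; alternatively, use $\varphi_1$ on $B_\rho$ and the interior Harnack inequality. We prefer the following route.

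By monotonicity in the domain, $\lambda_1(-\ld,B_\rho)\le\lambda_1(-\ld,B_{\rho/2})$, so it is enough to bound the eigenvalue in a ball where we control things. Let $\varphi_1$ be the eigenfunction on $B_\rho$, normalized by $\sup_{B_{\rho/2}}\varphi_1=1$. Let $\eta\in C_0^\infty(B_{\rho/2})$ be the first Dirichlet eigenfunction of $-\mathrm{div}(A^T\nabla\cdot)$. Testing $-\ld\varphi_1=\lambda_1\varphi_1$ against $\eta$ gives
\[
\lambda_1\int\varphi_1\eta=\int\varphi_1(-\ld^*\eta)+\text{boundary terms},
\]
where the boundary terms have a good sign, as in \eqref{HopfIPP}. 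Here $-\ld^*\eta$ is controlled through the $C^{1,\alpha}$ norm of $\eta$, which depends on $\|A\|_{C^\alpha}$, and through the $L^q$ norms of the lower-order coefficients. Since $\varphi_1\ge0$ and is bounded on $B_{\rho/2}$, we obtain $\lambda_1\int\varphi_1\eta\le C\|\varphi_1\|_{L^{q'}(B_{\rho/2})}$.

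The Harnack inequality \cite[Theorem 8.18, 8.17]{GT} applied to $\varphi_1$ on $B_{\rho/2}$ gives $\inf_{B_{\rho/4}}\varphi_1\ge c\sup_{B_{\rho/4}}\varphi_1$. Its constant depends on the norms listed and on $\lambda_1\rho^2$, so we must avoid circularity. To do so we treat $\lambda_1\varphi_1$ as a nonnegative right-hand side: $-\ld\varphi_1\ge0$, so the weak Harnack inequality bounds $\int\varphi_1$ from above by $C\inf\varphi_1$ independently of $\lambda_1$. Combined with $\eta\ge c$ on $B_{\rho/4}$, this gives $\lambda_1\le\overline{C_0}$.

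The main obstacle is this last point: obtaining the upper bound without letting $\lambda_1$ itself enter the Harnack constants. Using only the supersolution property, that is, the weak Harnack inequality, resolves it.
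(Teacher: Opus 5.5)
Your lower bound is correct. Testing the eigenvalue equation with the real positive eigenfunction and applying H\"older, Gagliardo--Nirenberg and Young gives exactly the paper's estimate: the paper's exponent $1+\mu$ equals $\beta_q$. You also reach it more directly than the paper, which proves coercivity of $-\mathcal{L}-\bar B+\epsilon$ and then uses the equivalence between the maximum principle and $\lambda_1>0$. For the upper bound you take a different route from the paper's blow-up argument. The paper rescales by $\mu_j^{-1/2}$, passes to a constant-coefficient limit $-\mathrm{div}(A^0\nabla\psi^0)=\psi^0$ in a whole or half space, and contradicts $\lambda_1(-\mathcal{L}^0,B_R)\to 0$. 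Your use of the weak Harnack inequality for the supersolution $\varphi_1$ correctly keeps $\lambda_1$ out of the constants. The exponents needed for the $b_1$- and $c$-terms are $q'$ and $q/(q-2)$ (the latter since $c\in L^{q/2}$), and both lie below $n/(n-2)$ because $q>n$.

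The gap is your claim that $-\mathcal{L}^*\eta$ is a function controlled by $\|\eta\|_{C^{1,\alpha}}$ and the $L^q$ norms of the coefficients. Since $\mathcal{L}^*\eta=\mathrm{div}(A^T\nabla\eta-b_2\eta)-b_1\cdot\nabla\eta+c\eta$, it contains $\mathrm{div}(b_2\eta)$, which for $b_2\in L^q$ is only a distribution. So the bound $\lambda_1\int\varphi_1\eta\le C\|\varphi_1\|_{L^{q'}(B_{\rho/2})}$ does not follow. That derivative cannot be moved onto $\eta$: the term $\int\eta\, b_2\cdot\nabla\varphi_1$ remains. You therefore need $\|\nabla\varphi_1\|_{L^{q'}}$ on the support of $\eta$ bounded by $C\inf\varphi_1$, with $C$ independent of $\lambda_1$. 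None of the obvious tools gives this:
\begin{itemize}
\item Boundedness of $\varphi_1$ controls $\varphi_1$, not its gradient.
\item The standard Caccioppoli inequality (testing with $\zeta^2\varphi_1$) puts $\lambda_1\int\zeta^2\varphi_1^2$ on the wrong side.
\item $C^1$ bounds and the local maximum principle depend on $\lambda_1\rho^2$, which is exactly the circularity you were avoiding.
\end{itemize}
One repair is to test the equation with $\zeta^2\varphi_1^{-\epsilon}$, $\epsilon\in(0,1)$. Then the term $\lambda_1\int\zeta^2\varphi_1^{1-\epsilon}\ge 0$ can be discarded, giving $\int\zeta^2\varphi_1^{-1-\epsilon}|\nabla\varphi_1|^2\le C\int_{\mathrm{supp}\,\zeta}(1+|b_1|^2+|b_2|^2+|c|)\varphi_1^{1-\epsilon}$. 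H\"older with $q'<n/(n-1)$ then bounds $\|\zeta\nabla\varphi_1\|_{L^{q'}}$ by $L^p$ norms of $\varphi_1$ with $p<n/(n-2)$, and the weak Harnack inequality controls these. For room, take $\eta$ to be the eigenfunction on $B_{\rho/4}$, and $\zeta$ supported in $B_{\rho/2}$ with $\zeta=1$ on $B_{\rho/4}$. With this step added, your argument becomes a constructive alternative to the paper's compactness proof; without it the upper bound is not established. A minor point: $\eta$ is not in $C_0^\infty$, only $C^{1,\alpha}$ and vanishing on $\partial B_{\rho/2}$, which is what your boundary-term remark actually uses.
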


\noindent{\it Proof.} The lower bound follows from a well-known computation (see for instance \cite[Lemma 8.4]{GT} for the case $q=\infty$). Indeed, the bilinear form associated to $\ld$ satisfies, by a standard application of H\"older, Young and Sobolev-Poincaré inequalities (with $p=q/2>n/2$, $B=\|b_1^2+b_2^2+c\|_{L^p}$)
\begin{align*}
-(\ld u, u) &\ge c_0 \int_\Omega |\nabla u|^2 - C_0 \int_\Omega (b_1^2+b_2^2+c)u^2\ge c_0\|\nabla u\|_{L^2}^2 -C_0B\|u\|_{L^{2p^\prime}}^2\\
&\ge c_0\|\nabla u\|_{L^2}^2+c_0\|u\|_{L^{2^*}}^2 - C_0 B\left[(c_0/2C_0B)^{1/2}\|u\|_{L^{2^*}}+ C_0B^{\mu/2}\|u\|_{L^{2}}\right]^2 \\
&\ge c_0|\Omega|^{-2/n}\| u\|_{L^2}^2 - C_0B^{1+\mu}\| u\|_{L^2}^2=: \bar B\|u\|_{L^2}^2,\qquad \mu=\frac{1/2-1/(2p^\prime)}{1/(2p^\prime)-1/2^*},
\end{align*}
with constants $c_0, C_0$ which depend only on $n,\lambda, q$, and change from line to line.  Hence the operator $-\hat\ld=-\ld-\bar B+\epsilon$ is coercive for each $\epsilon>0$, thus satisfies the maximum principle, that is, $-\hat\ld$ has a positive first eigenvalue. Therefore $\lambda_1(-\ld, \Omega)\ge c_0|\Omega|^{-2/n} - c_0B^{1+\mu}-\epsilon$, which implies the first inequality in \eqref{bdseig}.

For the upper bound one can use a standard blow-up argument.
Assume for contradiction that  for each integer $j\ge 1$, there exist coefficients $A_j,b_{1,j},b_{2,j},c_j$ such that,
for the corresponding operator $\ld_j$, the eigenvalue $\mu_j:=\lambda_1(-\ld_j, B_\rho)\to\infty$ as $j\to\infty$.
Let $\varphi_j>0$ be the corresponding eigenfunction normalized by
\be\label{mujlarge2}
\sup_{B_\rho} \varphi_j= \varphi_j(x_j)=1
\ee
and rescale
$\psi_j(y) = \varphi_j(x)  = \varphi_j(x_j+r_jy)$, where $r_j = 1/\sqrt{\mu_j} \to 0$.
Then $\psi_j$  satisfies
\be\label{mujlarge3}
-\tilde\ld_j\psi_j=\psi_j\quad\hbox{in $G_j:=r_j^{-1}(B_\rho-x_j)$,}
\ee
where the coefficients of $\tilde\ld_j$ are given by
$$\tilde A_j(y)=A(x_j+r_jy),\quad \tilde b_{i,j}(y)=r_jb_{i,j}(x_j+r_jy),\quad \tilde c_j(y)=r_j^2c(x_j+r_jy).$$
Note that $G_j\to G$ where $G$ is either the whole space or a half-space.
For each fixed $L\ge 1$ and large $j$, we have
$$[\tilde A_j]_{\alpha, B_L\cap  G_j}=r_j^\alpha[A_j]_{\alpha, B_{Lr_j}(x_j) \cap B_\rho}
\le  r_j^\alpha[A_j]_{\alpha, B_\rho}
$$
$$\|\tilde c_j\|_{L^{q/2}(B_L\cap G_j)}=r_j^{2-\frac{n}{q}}\|c_j\|_{L^q( B_{Lr_j}(x_j)\cap B_\rho)}
\le r_j^{2-\frac{n}{q}}\|c_j\|_{L^q(  B_\rho)},
$$
(and similarly for $b_{1,j}$, $b_{2,j}$ in $L^q$),
hence $\|\tilde c_j\|_{L^{q/2}(B_L\cap G_j)}, \|\tilde b_{k,j}\|_{L^q(B_L\cap G_j)}, [\tilde A_j]_{\alpha, B_L\cap  G_j}\to 0$ as $j\to\infty$.
By \eqref{mujlarge2}, \eqref{mujlarge3}, Morrey's $W^{1,q}$-estimates (see \cite[Section~5.5]{Mo}) and compact embeddings, up to a subsequence $x_j\to x_0\in \bar B_1$,
we have $A_j\to A^0$ in $C_{loc}^{\alpha/2}(\overline G)$,
where $A^0=A(x_0)$ is a constant matrix satisfying \eqref{hyp1},
and $\psi_j\to \psi^0\ge 0$ weakly in $W^{1,q}( G)$ (and strongly in $C_{\mathrm{loc}}(G)$), where $\psi^0$ satisfies $\psi^0(0)=1$ and
$$
-\ld^0\psi^0= -\mathrm{tr}(A^0D^2\psi^0)= -\mathrm{div}(A^0 \nabla\psi^0) = \psi^0\quad \hbox{ in $G$.}
$$
By the standard characterization of the first eigenvalue \eqref{charlambda1} this implies that the first eigenvalue of
$-\ld^0$ is larger or equal to $1$ in any subdomain of $G$.
But $G$ contains balls of arbitrary radius $R$, which leads to a contradiction, since the self-adjoint operator $-\ld^0$ has constant coefficients, and hence
$$
\lambda_1(-\ld^0, B_R) = \frac{\lambda_1(-\ld^0, B_1)}{R^2}
\to 0 \; \mbox{ as } \; R\to \infty.\qquad\Box
$$

\subsection{Properties associated with uniformly local norms and scaling}

In this subsection we prove Propositions~\ref{basicr0}--\ref{lemr0ul}.

\begin{proof}[Proof of Proposition~\ref{basicr0}]
The functions
$$h_1(r)=[A]_{\alpha, r, \Omega}, \quad  h_2(r)= [b_1]_{\alpha, r, \Omega},
\quad h_3(r) =\|b_2\|_{q,r,\Omega}, \quad h_4(r) = \|c\|_{q, r, \Omega}$$
are clearly nondecreasing on $[0,r_\Omega]$.
We claim that they are continuous.

The continuity of $h_3, h_4$ on the left is easy to show by using monotone convergence, whereas the continuity on the right follows from the fact that $\|c\|_{L^q(B_r(x_r)\setminus B_{r_0}(x_r)\cap\Omega)}\to 0$ if $r\searrow r_0$, $x_r\in \Omega$.
The continuity of $h_1, h_2$ on the left follows easily from the continuity of $A, b_1$.
Assume for contradiction that $h_2$ is not continuous on the right (the argument for $h_1$ is the same). Then there exist $r_0\in[0,r_\Omega)$,
$\eta>0$ and sequences $r_i\to r_0$ and $x_i\in\overline \Omega$, $y_i\ne z_i\in\Omega$ with $|y_i-x_i|, |z_i-x_i|<r_i$,
such that $|y_i-z_i|^{-\alpha}|b_1(y_i)-b_1(z_i)|\ge h_2(r_0)+\eta$.
By passing to a subsequence we may assume that $x_i\to x_0$, $y_i\to y_0$, $z_i\to z_0$ for some $x_0,y_0,z_0\in \overline \Omega$.
Note also that $b_1$ extends to a $C^\alpha$ function on $\overline \Omega$ and that $B_r(x)\cap \Omega$ can be replaced by
$\overline B_r(x)\cap \overline \Omega$ in definition \eqref{defHbracket}.
If $y_0\ne z_0$, then $h_2(r_0)+\eta\le |y_0-z_0|^{-\alpha}|b_1(y_0)-b_1(z_0)|\le h_2(r_0)$ (where the last inequality follows
from $|y_0-x_0|, |z_0-x_0|\le r_0$, and definition \eqref{defHbracket} with $x=x_0$): a contradiction.
If $y_0=z_0$, then we have $|y_i-y_0|, |z_i-y_0|<r_0$ for $i$ large enough, hence
$|y_i-z_i|^{-\alpha}|b_1(y_i)-b_1(z_i)|\le h_2(r_0)$, which is again a contradiction.
The claim is proved.

Let now
$$h(r)=r\Bigl(r^{-1}_\Omega +
[A]^{1/\alpha}_{\alpha, r, \Omega} + \|b_1\|_{L^\infty(\Omega)} + [b_1]^{1/(\alpha+1)}_{\alpha, r, \Omega}+
\|b_2\|^{\beta_q}_{q,r,\Omega} + \|c\|^{\gamma_q}_{q, r, \Omega}\Bigr), \quad r\in [0,r_\Omega].$$
By the above, $h$ is strictly increasing and continuous on $[0,r_\Omega]$ and, moreover, $h(0)=0$ and $h(r_\Omega)\ge 1$.
Consequently there exists a unique $r\in (0,r_\Omega]$ such that $h(r)=1$ and $r_0=r$, which implies \eqref{relMr0}.
\end{proof}

\begin{proof}[Proof of Proposition~\ref{scaleinvtilde0}]
If $r_0(\ld,\omega)\ge \theta^{-1} r_0(\ld,\Omega)$,
then \eqref{relMr0} and the assumption $r_{\Omega}\ge \theta r_{\omega}$ yield
$$M(\ld,\omega)=r_0^{-1}(\ld,\omega)-r_{\omega}^{-1}\le  \theta r_0^{-1}(\ld,\Omega)-\theta r_{\Omega}^{-1}
= \theta M(\ld,\Omega)\le M(\ld,\Omega).$$
If $r_0(\ld,\omega)\le \theta^{-1} r_0(\ld,\Omega)$, since any ball of radius
$r_0(\ld,\omega)$ can be covered by $C(n) \theta^{-n}$ balls of radius $r_0(\ld,\Omega)$,
it follows from \eqref{deful}-\eqref{defM}
that $M(\ld,\omega)\le C(n,p,q,\alpha,\theta)M(\ld,\Omega)$.
\end{proof}

\begin{proof}[Proof of Proposition~\ref{scaleinvtilde}]
For any $\tilde r>0$,
we have
$$\begin{aligned}
[\tilde A]_{\alpha, \tilde r, B_1}
&=\sup_{x\in\overline{B_1}}\sup_{y,z\in B_{\tilde r}(x)\cap \Omega} |y-z|^{-\alpha}|A(Ry)-A(Rz)| \\
&=R^{\alpha}\sup_{\hat x\in\overline{B_R}}\sup_{\hat y,\hat z\in B_{R\tilde r}(\hat x)\cap \Omega} |\hat y-\hat z|^{-\alpha}|A(\hat y)-A(\hat z)|
= R^{\alpha}[A]_{\alpha, R\tilde r, B_R}.
\end{aligned}$$
This and a similar argument for $b_1$ yields
\begin{equation}\label{scaleinvtildePf0}
[\tilde A]_{\alpha, \tilde r, B_1}= R^{\alpha}[A]_{\alpha, R\tilde r, B_R},\
[\tilde b_1]_{\alpha, \tilde r, B_1}= R^{1+\alpha}[b_1]_{\alpha, R\tilde r, B_R},\
\|\tilde b_1\|_{L^\infty(B_1)}=R\|b_1\|_{L^\infty(B_R)}.
\end{equation}
On the other hand, for $x\in B_1$, we have
$$\|\tilde b_2\|_{L^q(B_{\tilde r}(x)\cap B_1)}=R^{1-n/q}\|b_2\|_{L^q(B_{R\tilde r}(Rx)\cap B_R)},\quad
\|\tilde c\|_{L^q(B_{\tilde r}(x)\cap B_1)}=R^{2-n/q}\|c\|_{L^q(B_{R\tilde r}(Rx)\cap B_R)}$$
hence
\begin{equation}\label{scaleinvtildePf1}
\|\tilde b_2\|_{q,\tilde r,B_1}=R^{1-n/q}\|b_2\|_{q,R\tilde r,B_R},\quad
\|\tilde c\|_{q,\tilde r,B_1}=R^{2-n/q}\|c\|_{q,R\tilde r,B_R)}.
\end{equation}
Using also $r_{B_R}=c(n)R$, it follows that, for any $\tilde r>0$,
$$\tilde r\bigl(r^{-1}_{B_1} + [\tilde A]^{\frac{1}{\alpha}}_{\alpha, \tilde r, B_1} + \|\tilde b_1\|_{L^\infty(B_1)} + [\tilde b_1]^{\frac{1}{\alpha+1}}_{\alpha, \tilde r, B_1}
+\|\tilde b_2\|^{\beta_q}_{q,\tilde r,B_1} + \|\tilde c\|^{\gamma_q}_{q, \tilde r,B_1}\bigr)\le 1 \Longleftrightarrow $$
$$\hbox{$r:=R\tilde r$ satisfies }
r \bigl(r^{-1}_{B_R}+ [A]^{\frac{1}{\alpha}}_{\alpha, r, B_R} + \|b_1\|_{L^\infty(B_R)} + [b_1]^{\frac{1}{\alpha+1}}_{\alpha, r, B_R}
+\|b_2\|^{\beta_q}_{q,r,B_R} + \|c\|^{\gamma_q}_{q,r,B_R}\bigr)\le 1.$$
This yields \eqref{scaleinvtilde2}.
Combining \eqref{scaleinvtilde2} and \eqref{scaleinvtildePf0} gives \eqref{scaleinvtilde2b}.
Next, by \eqref{scaleinvtilde2}, \eqref{scaleinvtildePf1},
$$\|\tilde b_2\|_{q,\tilde r_0,B_1}=R^{1-n/q}\|b_2\|_{q,r_0,B_R}$$
which, together with the similar properties for $\tilde c$, $\tilde f$, gives \eqref{scaleinvtilde4}.
\end{proof}

 \begin{proof}[Proof of Proposition~\ref{lemr0ul}]
(i) Recalling the definition \eqref{defM} and Proposition \ref{basicr0},
we have
$$ r_0^{1-n/q} \|b_2\|_{q,r_0,B_R}\le 1,\quad
r_0^{2-n/q} \|c\|_{q,r_0,B_R}\le 1.$$
By H\"older's inequality it follows that
$$ \|b_2\|_{q,r_0,B_R} \le C(n)r_0^{n/q} \|b_2\|_{L^\infty(B_R)}
\le C(n)\|b_2\|^{-\frac{n}{q-n}}_{q,r_0,B_R}\|b_2\|_{L^\infty(B_R)},$$
hence
$$ \|b_2\|^{\frac{q}{q-n}}_{q,r_0,B_R}
\le C(n)\|b_2\|_{L^\infty(B_R)}$$
i.e. \eqref{culinfty} (the statement for $c$  is obtained similarly).

\smallskip

(ii) The upper estimates follow from assertion (i) (since the $L^\infty$ norm does not involve $r_0$).
To prove the lower estimate for $b$ (the case of $c$ is similar)
set $K_\lambda=\|\lambda b\|_{q,r_\lambda,B_R}\equiv \sup_{x\in B_R} \|\lambda b\|_{L^q(B_R\cap B_{{r_\lambda}}(x))}$.
By \eqref{relMr0}, we have
\be\label{Krlambda0}
r_\lambda\Bigl(r_{B_R}^{-1}+\lambda^{\beta_q}\sup_{x\in B_R} \|b\|^{\beta_q}_{L^q(B_R\cap B_{{r_\lambda}}(x))}\Bigr)
=r_\lambda(r_{B_R}^{-1}+K_\lambda^{\beta_q})=1=r_1(r_{B_R}^{-1}+K_1^{\beta_q}).
\ee
Assume $\lambda\ge 1$. Then \eqref{Krlambda0} implies
$$r_\lambda\Bigl(r_{B_R}^{-1}+\sup_{x\in B_R} \|b\|^{\beta_q}_{L^q(B_R\cap B_{{r_\lambda}}(x))}\Bigr)
\le r_1\Bigl(r_{B_R}^{-1}+\sup_{x\in B_R} \|b\|^{\beta_q}_{L^q(B_R\cap B_{{r_1}}(x))}\Bigr).
$$
By the definition of $r_1$ we get $r_\lambda\le r_1$ and, going back to \eqref{Krlambda0}, that $r_\lambda K_\lambda^{\beta_q} \ge r_1K_1^{\beta_q}$, hence
\be\label{Krlambda}
r_1^{-n/q}K_1^{-n\beta_q/q}K_\lambda^{n\beta_q/q}\ge r_\lambda^{-n/q}.
\ee
On the other hand, since any ball with radius $r_1$ can be covered by $C(n)(r_1/r_\lambda)^n$ balls with radius $r_\lambda$, we have
$$K_1=\sup_{x\in B_R} \|b\|_{L^q(B_R\cap B_{{r_1}}(x))}
\le C(n)  \Bigl(\frac{r_1}{r_\lambda}\Bigr)^{n/q}\sup_{x\in B_R} \|b\|_{L^q(B_R\cap B_{{r_\lambda}}(x))}
=C(n) \Bigl(\frac{r_1}{r_\lambda}\Bigr)^{n/q}\lambda^{-1} K_\lambda.$$
Combining this with \eqref{Krlambda}, we get
$$r_1^{-n/q}K_1^{-n\beta_q/q}K_\lambda^{1+n\beta_q/q} \ge r_\lambda^{-n/q}K_\lambda\ge C(n) \lambda r_1^{-n/q}K_1,$$
hence $K_\lambda^{1+n\beta_q/q} \ge C(n) \lambda K_1^{1+n\beta_q/q}$.
Since $1+\frac{n\beta_q}{q}=(1-\frac{n}{q})^{-1}$ this yields the lower estimate for $b$.
\smallskip

Let us finally check the continuity statement.
 As above, by \eqref{Krlambda0} $r_\lambda$ is nonincreasing with respect to $\lambda\in(0,\infty)$
and \eqref{Krlambda0} guarantees that
$$r_\lambda H(\lambda)=\lambda^{-\beta_q},
\quad\hbox{ where }
H(\lambda):=\lambda^{-\beta_q}{r_{B_R}^{-1}}+ \sup_{x\in B_R} \|b\|^{\beta_q}_{L^q(B_R\cap B_{{r_\lambda}}(x))}.$$
Since the functions $r_\lambda$ and $H(\lambda)$ are nonincreasing on $(0,\infty)$
and their product is a continuous function,
both functions are necessarily continuous. This implies that $K_\lambda=\|\lambda b\|_{q,r_\lambda,B_R}
=\lambda \sup_{x\in B_R} \|b\|_{L^q(B_R\cap B_{{r_\lambda}}(x))}$ is itself continuous.
\end{proof}

\subsection{Harnack inequalities}

 In this section we provide a proof of the global Harnack inequalities in Theorem \ref{BHIoptim}.
 They rely on modifications of Harnack chain arguments from \cite{SS2},
adapted to general domains (and not only for balls as in \cite{SS2}).
To this end, we need the following proposition,
which guarantees the existence of suitable coverings and associated Harnack chains,
with optimal length estimate in terms of the geodesic diameter.
 This may be known but we could not find a reference in the literature,
so we provide a proof.
Here we use the notation given at the beginning of Section \ref{sec-main}.

\begin{prop}\label{geodes}
 (i) Let $\Omega$ be an arbitrary bounded domain of $\R^n$ and let $r\in(0,D)$.
There exist constants $c_0,c_1>0$ depending only on $n$, integers $N,I\ge 2$ satisfying
\bel{coverA0}
N=N_r\le 2+\frac{6D}{r},\qquad I=I_r\le c_0\Bigl(1+\frac{D_0}{r}\Bigr)^n\,,
\ee
 points $x_1,\dots,x_I\in \Omega$ for which
\bel{coverA1}
\Omega\subset\bigcup_{j=1}^I B_r(x_j),\quad\mbox{and}
\ee
\bel{coverA3}
\begin{aligned}
&\hbox{for each $k,l\in \{1,\ldots,I\}$ there exist $p\in\{2,\dots,N\}$ and $(j_1,\dots,j_p)\in \{1,\dots,I\}^p$}\\
&\hbox{such that $j_1=k$, $j_p=l$ and $|B_r(x_{j_{i-1}})\cap B_r(x_{j_i})|\ge c_1r^n$ for all $i\in \{2,\ldots,p\}$.}
\end{aligned}
\ee
\smallskip

(ii) Let $\Omega$ be a bounded domain of $\R^n$ with $C^{1,\bar\alpha}$ boundary and let $r\in(0,r_\Omega)$.
There exist constants $c_0,c_1>0$ depending only on $n$, integers $N,I\ge 2$
satisfying \eqref{coverA0}
and points $x_1,\dots,x_I$ satisfying \eqref{coverA1}, \eqref{coverA3}, such that
\bel{coverA2}
\hbox{for each $j\in\{1,\dots,I\}$, either dist$(x_j, \partial \Omega)\ge 3r/2$ or $x_j\in \partial \Omega$, and}
\ee
\bel{coverA2b}
\hbox{for each $j\in\{1,\dots,I\}$, $|\Omega\cap B_r(x_j)|\ge c_0r^n$}.
\ee
\end{prop}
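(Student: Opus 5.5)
We build the covering from a maximal separated set and get the chains by following near-geodesic paths. For (i), fix $r\in(0,D)$ and take a maximal $r/2$-separated set $\{x_1,\dots,x_I\}\subset\Omega$, i.e.\ $|x_j-x_k|\ge r/2$ for $j\neq k$. By maximality every point of $\Omega$ lies within $r/2$ of some $x_j$, which gives \eqref{coverA1}. The balls $B_{r/4}(x_j)$ are pairwise disjoint and contained in the $r/4$-neighbourhood of $\Omega$. That neighbourhood has diameter at most $D_0+r/2$, so a volume count gives $I\le c_0(1+D_0/r)^n$.

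For the chains \eqref{coverA3}, take $k,l$ and a path $\sigma\subset\Omega$ from $x_k$ to $x_l$ with $l(\sigma)\le D+r/4$; this is possible by definition of $D$. Place points $y_0=x_k,y_1,\dots,y_m=x_l$ along $\sigma$ at arclength spacing at most $r/4$, with $m\le 1+4(D+r/4)/r$. For each $y_i$ choose an index $j_i$ with $|y_i-x_{j_i}|<r/2$. Consecutive centres then satisfy $|x_{j_{i-1}}-x_{j_i}|<r/2+r/4+r/2=5r/4$. This is too large to guarantee an overlap of volume $\sim r^n$, so the spacing must be refined. Instead we require $|y_i-x_{j_i}|\le r/2$ and spacing at most $r/2$, giving $|x_{j_{i-1}}-x_{j_i}|\le 3r/2<2r$. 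Then $B_r(x_{j_{i-1}})\cap B_r(x_{j_i})$ contains a ball of radius $r/4$ centred at the midpoint, so its measure is at least $c_1r^n$. Removing repeated consecutive indices, the chain length is $p\le m+1\le 2+6D/r$ for a suitable bookkeeping of constants, which is \eqref{coverA0}. If the constants do not close, we adjust the separation to $r/3$ and the spacing to $r/3$; the count is crude.

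For (ii), $r<r_\Omega$, and we must replace centres near the boundary so that \eqref{coverA2} holds. We modify each $x_j$ from (i), built with a smaller radius $\theta r$ for a fixed dimensional $\theta$. If $d(x_j)\ge 3r/2$, keep it. Otherwise, replace it by its nearest boundary point $\xi_j$, so that $|x_j-\xi_j|<3r/2$, and use the ball $B_{r}(\xi_j)$ together with interior balls. The difficulty is that points near $x_j$ must remain covered, and overlaps must remain of size $r^n$. Here we use the flattening from the definition of $\rho_\Omega$ and the paraboloid \eqref{def_parab}. In $B_{r}(\xi)\cap\Omega$ the domain is bi-Lipschitz (constant 2) to a half-ball. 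This gives \eqref{coverA2b}, since $|\Omega\cap B_r(\xi)|\ge c r^n$. It also shows that a point within distance $\sim r$ of $\partial\Omega$ is either in some $B_r(\xi)$, $\xi\in\partial\Omega$, or at distance at least $3r/2$ from it.

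Concretely, the new centres are the points of a maximal $\theta r$-net of $\{d\ge 3r/2\}$ together with those of a maximal $\theta r$-net of $\partial\Omega$. Coverage and the overlap bound use the cone $\{|y'|<y_n\}$ inside $\Omega$, which provides a full-measure portion of each overlap. The chain construction repeats (i), projecting each path point to its nearest centre; the geodesic path stays in $\Omega$. The main obstacle is this boundary bookkeeping. One must ensure that consecutive modified centres overlap in measure $\ge c_1r^n$, and the cone condition from \eqref{def_parab}–\eqref{def_romega} is what I would use to guarantee it. Everything else is counting, and the bound on $N$ stays linear in $D/r$ because replacement moves centres by only $O(r)$.
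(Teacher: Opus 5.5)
Your part (i) is fine. A maximal $r/2$-separated net covers $\Omega$ with radius $r/2$, consecutive centres end up at distance $\le 3r/2$, and the chain length is $p\le \frac52+\frac{2D}{r}\le 2+\frac{6D}{r}$ because $r<D$. Your worry about $5r/4$ was unfounded: centres at distance $\le(2-\kappa)r$ already give $|B_r\cap B_r|\ge|B_{\kappa r/2}|$. This is a legitimate alternative to the paper's exit-point chain.

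The genuine gap is in part (ii), namely covering the layer $\{(1-\theta)r\le d(x)<3r/2\}$. A point with $d(x)\ge r$ lies in no ball $B_r(\xi)$ with $\xi\in\partial\Omega$, and it is not in $\{d\ge 3r/2\}$. So your dichotomy (``either in some $B_r(\xi)$ or at distance at least $3r/2$'') fails exactly on this layer. Such a point can only be covered by some $B_r(z)$ with $d(z)\ge 3r/2$, and you must prove that such a $z$ exists at distance $<r$ from $x$. This is not automatic: a slab of width $<3r$ has no point with $d\ge 3r/2$. So it must come from $r<r_\Omega$.

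The $45^\circ$ cone $\{|y'|<y_n\}$ you invoke is too weak for this. It guarantees $B_{3r/2}(z)\subset\Omega$ only when $z_n-|z'|\ge 3r/\sqrt2\approx 2.12r$. Any such $z$ lies at distance $\ge 1.12r$ from a point at depth $r$ on the axis. What is needed is near-flatness of $\partial\Omega$ at scale $r$, which is the paper's claim \eqref{claimdz}. For $3r/4\le d(x)\le 3r/2$, the point $z=p_x-(d(x)+\frac{10}{13}r)\nu_x$ satisfies $|x-z|=\frac{10}{13}r<\frac45 r$ and $B_{3r/2}(z)\subset\Omega$. The reason is that, in the coordinates of \eqref{def_parab}, every $y\in B_{3r/2}(z)$ has $y_n<4r<\bar\rho_\Omega$ and $y_n>r/52\ge k_\Omega(3r/2)^{1+\bar\alpha}\ge k_\Omega|y'|^{1+\bar\alpha}$. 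This is precisely why the factors $\bar\rho_\Omega/4$ and $(120k_\Omega)^{-1/\bar\alpha}$ appear in \eqref{def_romega}. The paper then covers $\{d<3r/4\}$ by boundary-centred balls, $\{3r/4\le d\le 3r/2\}$ by these $z$'s, and $\{d\ge 3r/2\}$ directly, all with radius $4r/5$.

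The same gap undermines your chain step. ``Repeating (i)'' needs a uniform covering radius $\rho\le(1-\kappa)r$, so that consecutive centres satisfy $2\rho+s<2r$. In (ii) one cannot do better than $\rho\approx 3r/4$: a point at depth $3r/4$ is at distance $\ge 3r/4$ from $\partial\Omega$ and, since $d$ is $1$-Lipschitz, also from $\{d\ge 3r/2\}$. Even this margin is available only once the claim above is proved. The spacing must then be below $2(r-\rho)$; alternatively, use the paper's exit-point argument with the $5r/6$ cover, which gains arclength $r/6$ per step.

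Finally, the overlap in \eqref{coverA3} is the Euclidean measure $|B_r(x_{j_{i-1}})\cap B_r(x_{j_i})|$, not its intersection with $\Omega$, so no cone is needed there. The geometry of $\partial\Omega$ enters only through coverage and \eqref{coverA2b}.
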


\begin{rem}
The upper bound \eqref{coverA0} on $N$ in Proposition~\ref{geodes}
(which implies $N\le \frac{8D}{r}$) is qualitatively optimal,
since any $N$ with such properties necessarily satisfies
\bel{optimN}
N\ge \frac{D}{2r}.
\ee
Indeed, fix any $x,y\in \Omega$, take $k,l$ such that $x\in B_r(x_k)$, $y\in B_r(x_l)$
and a chain $k=j_1<j_2<\dots<j_p=x_l$ with $p\le N$.
Then, using that $B_r(x_{j_{i+1}})\cap B_r(x_{j_i})\ne\emptyset$, we see that the geodesic distance between $x,y$ can be estimated by
$$d_\Omega(x,y)\le |x-x_k|+|y-x_l|+\sum_{i=2}^p |x_{j_{i}}-x_{j_{i-1}}|\le r+r+2(p-1)r=2pr\le 2Nr.$$
Taking supremum over $x,y\in \Omega$, we obtain \eqref{optimN}.
As for the upper bound on $I$ in \eqref{coverA0}, it is clearly optimal in general
(unless $\Omega$ has a specific, e.g.~tubular, geometry).
\end{rem}

\begin{proof}[Proof of Proposition~\ref{geodes}]

 We will only prove assertion~(ii), the proof of assertion~(i) being similar and easier.
We first claim that there exist a number $I\in\mathbb{N}$ and points $x_1,\dots,x_I$ such that
\be\label{coverA00}
 I\le c_0(n)\Bigl(1+\frac{D_0}{r}\Bigr)^n, \quad
\hbox{(\ref{coverA2}) holds,}\quad\hbox{ and }
\Omega\subset\bigcup_{j=1}^I B_{5r/6}(x_j)
\ee
(hence in particular \eqref{coverA1}).

For $\eps>0$, we denote
$$\Omega_\eps=\{x\in\Omega;\ {\rm dist}(x,\partial\Omega)>\eps\},\quad
\omega_\eps=\{x\in\Omega;\ {\rm dist}(x,\partial\Omega)<\eps\}.$$
\indent $\bullet$ Since $\{B_{4r/5}(x);\ x\in\partial\Omega\}$ is an open covering of the compact $\overline\omega_{3r/4}$,
we can cover $\overline\omega_{3r/4}$ by finitely many such balls, whose set of centers we denote by $\Sigma_1$.

$\bullet$ Also, we can obviously cover the compact $\overline\Omega_{3r/2}$ by finitely many balls
of radius $4r/5$ and centered at points $x$ with $d(x)\ge3r/2$. We denote the set of their centers by $\Sigma_2$.

$\bullet$ Next, consider the case when $x\in K:=\overline\omega_{3r/2}\setminus\omega_{3r/4}=\{3r/4\le d(x)\le 3r/2\}$.
Set $d=d(x)$, denote by $p_x$ the projection of $x$ on $\partial\Omega$ and by $\nu_x$ the outer normal vector at $p_x$,
hence
$x=p_x-d\nu_x$.
Let $\ell=\frac{10}{13}$ and $z=p_x-(d+\ell r)\nu_x$.
Note that $|x-z|=\ell r<4r/5$.
We claim that
\be\label{claimdz}
d(z)\ge 3r/2.
\ee
 Indeed, recalling the first paragraphs of Section \ref{sec-main} and the notation therein,
after an orthonormal change of coordinates,
we may assume that $p_x=0$, $x=(0,d)\in\R^{n-1}\times\R$, $z=(0,d+\ell r)$
and that
\be\label{claimdz2}
\Omega\supset\bigl\{y=(y';y_n)\in \R^{n-1}\times\R:\ |y'|< \bar\rho_\Omega\ \hbox{and}\ k_\Omega |y'|^{1+\alpha}<y_n< \bar\rho_\Omega\bigr\}.
\ee
Working in the new coordinate, for any $y$ such that $|y-z|<3r/2$,
using $r<r_\Omega\le\min(\bar\rho_\Omega/4,$ $(120 k_\Omega)^{-1/\alpha})$, we obtain $y_n<d+\ell r+3r/2<4r<\bar\rho_\Omega$, $|y'|<3r/2<\bar\rho_\Omega$
and
$$y_n>d+\ell r-3r/2\ge \ell r-3r/4=r/52\ge k_\Omega (3r/2)^{1+\alpha}\ge k_\Omega |y'|^{1+\alpha},$$
hence $y\in\Omega$ owing to \eqref{claimdz2}. This proves \eqref{claimdz}.
Consequently,
 $\{B_{4r/5}(z)\::\: d(z)\ge 3r/2\}$ is an open covering of the compact
$K$ and we can extract a finite covering, whose set of centers we denote by $\Sigma_3$.

Enumerating $\Sigma:=\Sigma_1\cup\Sigma_2\cup\Sigma_3$ as $\{x_1,\dots,x_m\}$, we then have
$$\Omega\subset\bigcup_{i=1}^m B_{4r/5}(x_i).$$
Since $m$ need not be bounded by $c_0(n)\bigl(\frac{D_0}{r}\bigr)^n$, we will now define a subset of $\{1,\dots,m\}$ as follows.
We may fix $\eps=\eps(n)>0$ such that, for any $y,z\in\R^n$, $|y-z|\le \eps r$ implies $B_{4r/5}(y)\subset B_{5r/6}(z)$.
Set $i_1=1$.
We first remove all $j$ with $1<j\le m$ such that $|x_j-x_{i_1}|< \eps r$,
and we note that $B_{4r/5}(x_j)\subset B_{5r/6}(x_{i_1})$ for all such $j$.
Let $i_2$ be the smallest remaining index $>i_1$ (if any).
We then remove all $j$ with $i_2<j\le m$ such that $|x_j-x_{i_2}|< \eps r$,
and we note that $B_{4r/5}(x_j)\subset B_{5r/6}(x_{i_2})$ for all such~$j$.
Repeating the process, we obtain $I\le m$ and $1=i_1<\dots<i_I\le m$ such that
\bel{disteps}
|x_{i_j}-x_{i_k}|\ge \eps r\quad\hbox{ for all $1\le j< k\le I$}
\ee
 and we have
\bel{cover1}
\Omega\subset\bigcup_{j=1}^I B_{5r/6}(x_{i_j}).
\ee
Picking $x_0$ such that $\Omega\subset B(x_0,D_0)$, we have
 $B_{\eps r/2}(x_{i_j})\subset B(x_0,D_0+\eps r/2)$.
Since \eqref{disteps} guarantees that $B_{\eps r/2}(x_{i_j})\cap B_{\eps r/2}(x_{i_k})=\emptyset$ for all $1\le j<k\le I$,
it follows that $I |B_{\eps r/2}(0)|\le |B_{D_0+\eps r/2}(0)|$, hence
$$I\le \Bigl(1+\frac{2D_0}{\eps r}\Bigr)^n\le c_1(n)\Bigl(1+\frac{D_0}{r}\Bigr)^n.$$
Relabelling these points, we have thus proved claim \eqref{coverA00}.

\smallskip

Let us next prove  \eqref{coverA3} for some $N\le 1+\frac{6D}{r}$.
We observe that, for any $x,y\in\R^n$,
\bel{measure-c0}
\bar B_r(x)\cap \bar B_{5r/6}(y)\ne\emptyset\Longrightarrow |B_r(x)\cap B_r(y)|\ge c_0(n)r^n.
\ee
Fix $1\le k<l\le I$.
By the definition of $D$, there exists a Lipschitz curve $\gamma:[0,1]\to \Omega$
such that $\gamma(0)=x_k$, $\gamma(1)=x_l$ and $s(1)\le D$,
where $[0,1]\ni t\mapsto s(t)$ denotes the increasing curvilinear abscissa along $\gamma$.
We set $t_1=0$, $j_1=k$.
\smallskip

$\bullet$
If $\gamma([t_1,1])\subset B_r(x_{j_1})$ then we set $j_2=l$ and $p=2$.
Since $x_l=x_{j_2}\in B_r(x_{j_1})$, \eqref{measure-c0} guarantees that
\bel{measure-c010}
|B_r(x_{j_1})\cap B_r(x_{j_2})|\ge c_0(n)r^n.
\ee

$\bullet$ Otherwise, there exists a minimal $t_2\in(t_1,1]$ such that $\gamma(t_2)\in \partial B_r(x_{j_1})$.
In particular, $s(t_2)-s(t_1)\ge|\gamma(t_2)-x_{j_1}|=r$.
And owing to \eqref{cover1}, there exists $j_2\in\{1,\dots,I\}$ such that $\gamma(t_2)\in B_{5r/6}(x_{j_2})$.
Moreover, since $\gamma(t_2)\in \bar B_r(x_{j_1})\cap \bar B_{5r/6}(x_{j_2})$, \eqref{measure-c0} guarantees that
\eqref{measure-c010} is still true.

\smallskip
$\bullet$ If $\gamma([t_2,1])\subset B_r(x_{j_2})$  then we set $j_3=l$, $p=3$
and, similar to the case $p=2$, we obtain
$|B_r(x_{j_2})\cap B_r(x_{j_3})|\ge c_0(n)r^n$.

\smallskip
$\bullet$ Otherwise, there exists a minimal $t_3\in(t_2,1]$ such that $\gamma(t_3)\in \partial B_r(x_{j_2})$.
In particular,
\bel{curvilineardiff}
s(t_3)-s(t_2)\ge |\gamma(t_3)-\gamma(t_2)| \ge |\gamma(t_3)-x_{j_2}|-|\gamma(t_2)-x_{j_2}|\ge r-(5r/6)=r/6.
\ee
Then, owing to \eqref{cover1}, there exists $j_3\in\{1,\dots,I\}$ such that $\gamma(t_3)\in B_{5r/6}(x_{j_3})$.

\smallskip
$\bullet$ We can repeat this process as long as $\gamma([t_i,1])\not\subset B_r(x_{j_i})$.
Also, at the $i$-th step,
since $\gamma(t_i)\in \bar B_r(x_{j_{i-1}})\cap \bar B_{5r/6}(x_{j_i})$, \eqref{measure-c0} guarantees that
\bel{measure-c02}
|B_r(x_{j_{i-1}})\cap B_r(x_{j_i})|\ge c_0(n)r^n
\ee
and, similar to \eqref{curvilineardiff}, we have $s(t_i)-s(t_{i-1})\ge r/6$.
Since $s$ is an increasing function with $s(1)-s(0)\le D$, it follows that $i$ cannot exceed the value $1+(6D/r)$
and we eventually reach $i$ such that $\gamma([t_i,1])\subset B_r(x_{j_i})$.
Consequently we obtain an integer $p\le 2+(6D/r)$ and indices
$k=j_0,j_1,\dots,j_p=l$ such that \eqref{measure-c02} holds for all $i\in\{1,\dots,l\}$.

Finally, property \eqref{coverA2b} is trivial in case dist$(x_i, \partial \Omega)\ge 3r/2$.
In case $x_i\in \partial \Omega$, it easily follows from
\eqref{claimdz2} and the definition of $k_\Omega, \bar\rho_\Omega$ at the beginning of Section~\ref{sec-main}.
This completes the proof.
\end{proof}

We turn to the proof of Theorem \ref{BHIoptim}, for which we will use the following particular case of the results in~\cite{GSS}. Here we denote $B_R^+ = B_R\cap \{x\::\: x_n>0\}$.

\begin{thm}[\cite{GSS}] \label{fromGSS} Assume $u\ge0$ in $B_1^+$ is a weak solution of $-\mathcal{L}u\ge f $ in $B_1^+$, for some $f\in L^q(B_1^+)$, and the coefficients of $\mathcal{L}$ satisfy \eqref{hyp1}-\eqref{hyp2} in $\Omega = B_1^+$.
There exist  constants $\epsilon, C>0$ depending only on $n,q,\alpha,\lambda,\Lambda$, and upper bounds on the $C^\alpha$-norms of $A,b_1$ and the $L^q$-norms of $b_2,c$ in $B_1^+$, such that for any $\omega\subset B_{3/4}^+$
\begin{equation}\label{fixWBHI}
\left(\int_{\omega} \left(\frac{u}{d}\right)^\epsilon\right)^{1/\epsilon} \le
C \left( \inf_{\omega} \frac{u}{d} +
\|f\|_{L^{q}(B_1^+)}\right).
\end{equation}
If on the other hand  $-\mathcal{L}u\le f $  in $B_1^+$, $u\le0$ on $\partial B_1^+$, then
\begin{equation}\label{fixlocmax}
\sup_{\omega} \frac{u^+}{d}\le C\left( \left(\int_{\omega} \left(\frac{u^+}{d}\right)^\epsilon\right)^{1/\epsilon} +
\|f\|_{L^{q}(B_1^+)} \right).
\end{equation}
\end{thm}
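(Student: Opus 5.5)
The statement is quoted from \cite{GSS}, so the plan is to recover it from the results proved there rather than rebuild the theory. The idea is to reduce to the half-ball setting of that paper through a flattening argument, and then read off the quantitative dependence.

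First, by the hypotheses on $\mathcal{L}$ in $B_1^+$, the flat boundary portion $B_1^0$ is trivially $C^{1,\bar\alpha}$. The interior weak Harnack inequality \cite[Theorem 8.18]{GT} handles points at distance comparable to $1$ from $\{x_n=0\}$. Near the flat boundary I would use the boundary weak Harnack inequality for $u/d$ from \cite{GSS} (see also \cite{Sir3}). Its key ingredients are:
\begin{itemize}
\item[(a)] a barrier (lower) estimate: a quantitative Hopf lemma, built with a subsolution of the form $x_n + K x_n^{1+\alpha}$-type perturbation adapted to $C^\alpha$ coefficients and $L^q$ drifts;
\item[(b)] an $L^\epsilon$ growth lemma for $u/x_n$ in boundary cylinders;
\item[(c)] a Krylov--Safonov/De Giorgi type covering argument that converts (a) and (b) into \eqref{fixWBHI} on any $\omega\subset B_{3/4}^+$.
\end{itemize}
Comparing $d$ with $x_n$ on $B_{3/4}^+$, which is possible because $d=\min(x_n,\mathrm{dist}(x,\partial B_1))$ and $x_n\le d+C$ there, lets me pass from estimates for $u/x_n$ to the form stated.

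For \eqref{fixlocmax} I would argue by comparison. Since $u\le0$ on $\partial B_1^+$ and $-\mathcal{L}u\le f$, the first step is to bound $u^+\le C(\sup_{B_{7/8}^+}u^+ + \|f\|_{L^q})x_n$ near the flat boundary. This follows from the boundary $C^{1,\alpha}$ estimate (\cite[Section 8.11]{GT}, \cite[Chapter 5.5]{Mo}) applied to the solution of the Dirichlet problem dominating $u^+$. The second step is to bound $\sup u^+$ by an $L^\epsilon$ norm via the local maximum principle \cite[Theorem 8.17]{GT}, using its version up to the flat boundary for subsolutions vanishing there. Combining the two gives $\sup_\omega u^+/d$ controlled by the $L^\epsilon$ norm of $u^+$. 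Since $u^+\le D\,u^+/d$ with $D$ bounded, this in turn controls the $L^\epsilon$ norm of $u^+/d$.

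The main obstacle is (a)--(b) in \eqref{fixWBHI} with divergence-form first-order terms: one only has a weak formulation, while the barrier must be a genuine weak subsolution. I would therefore take this step directly from \cite[Theorem 1.1]{GSS}, noting that its constants depend only on the listed quantities.
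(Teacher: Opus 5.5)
For \eqref{fixWBHI} you do what the paper does: you quote \cite[Theorem 1.1]{GSS}. Two small corrections there. The comparison you need is the multiplicative one, $d\le x_n\le 3d$ on $B_{3/4}^+$, not $x_n\le d+C$. The arbitrary $\omega$ costs nothing, since $\int_\omega\le\int_{B_{3/4}^+}$ and $\inf_{B_{3/4}^+}\le\inf_\omega$.

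For \eqref{fixlocmax} the paper only points to the local maximum principle \cite[p.9]{GSS}. Your comparison scheme is a sensible way to unpack this, but the comparison step fails as written. ``The solution of the Dirichlet problem dominating $u^+$'' presupposes that $\mathcal{L}$ satisfies the maximum principle in $B_{7/8}^+$, and the hypotheses do not give this. The constants may depend on upper bounds for the coefficient norms, so e.g.\ $\mathcal{L}=\Delta+c$ with a constant $c>\lambda_1(-\Delta,B_{7/8}^+)$ is admissible. Then the Dirichlet problem need not be solvable, and no comparison holds.

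The fix is a shift:
\begin{itemize}
\item Put $S=\sup_{B_{7/8}^+}u^+$. Choose $K$, depending only on the coefficient bounds, so that $-\mathcal{L}+K$ is coercive on $H^1_0$ of subdomains of $B_1^+$; the computation in the proof of Proposition~\ref{lowerbdeig} works on any bounded domain.
\item Check that $u^+$ is a weak subsolution of $-(\mathcal{L}-K)u^+\le |f|+KS$.
\item Take a smooth $G$ with $B_{13/16}^+\subset G\subset B_{7/8}^+$, and let $w$ solve $-(\mathcal{L}-K)w=|f|+KS$ in $G$. Give $w$ boundary data equal to $S$ on $\partial G\cap\{x_n>0\}$ and to $0$ on $B^0_{13/16}$, interpolated smoothly on the rest of the flat part.
\item Weak maximum principle for the coercive operator: $u^+\le w$ in $G$.
\item Boundary $C^{1,\alpha}$ estimate for $w$: $u^+\le w\le C(S+\|f\|_{L^q(B_1^+)})x_n$ in $B_{3/4}^+$.
\end{itemize}

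The second problem is the domain of the $L^\epsilon$ norm. The local maximum principle bounds $S$ by $\|u^+\|_{L^\epsilon(B_1^+)}+\|f\|_{L^q(B_1^+)}$, a norm over a strictly larger set. So what your argument proves is
\[
\sup_\omega\frac{u^+}{d}\le C\Bigl(\Bigl\|\frac{u^+}{d}\Bigr\|_{L^\epsilon(B_1^+)}+\|f\|_{L^q(B_1^+)}\Bigr),
\]
and you should say so explicitly. The version with $\int_\omega$ on the right, as literally written in \eqref{fixlocmax}, cannot be obtained by any argument, because it is false for small $\omega$. Take $\mathcal{L}=\Delta+\lambda_1(-\Delta,B_1^+)$, $f=0$, $u$ the first Dirichlet eigenfunction of $B_1^+$, and $\omega=B_\delta(x_0)$ with $x_0=(0,\dots,0,1/2)$. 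The left side is at least $2u(x_0)>0$. The right side is at most $4C|B_\delta|^{1/\epsilon}\max u$, which tends to $0$ as $\delta\to0$. In \eqref{fixWBHI} shrinking $\omega$ only weakens the claim, whereas here it also shrinks the right-hand side.

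The displayed form is exactly what the proof of Theorem~\ref{BHIoptim} needs, since there the $L^\epsilon$ norm over the larger set is afterwards controlled by \eqref{sharpWBHI}. Your argument is local, so it only uses $u\le0$ on the flat part $B_1^0$, which is a point in its favour. After flattening in that proof, the rescaled solution vanishes on $B_1^0$ but is positive on the curved part of $\partial B_1^+$. So the hypothesis $u\le0$ on all of $\partial B_1^+$ would not be available there anyway.
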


The inequality \eqref{fixWBHI} is a particular case of  \cite[Theorem 1.1]{GSS}, while \eqref{fixlocmax} is a simple application of the local maximum principle, see for instance \cite[p.9]{GSS}.

\begin{proof}[Proof of Theorem \ref{BHIoptim}]
We
consider the collection of balls with radius $r_0/2$ which covers $\overline{\Omega}$ ($r_0$ is the number from \eqref{defr0}, \eqref{relMr0}) and the numbers $I,N$
 given by Proposition~\ref{geodes} (ii) applied with $r=r_0/2$.

\noindent {\it Case 1.} Fix one such ball $B=B_{r_0/2}(\bar x)$ whose center $\bar x$ is on $\partial\Omega$. Let $B^\prime=B_{r_0}(\bar x)$ and $\Phi$ be the $C^{1,\bar\alpha}$ diffeomorphism defined in Section~\ref{sec-main} which sends $(B^\prime\cap\Omega-\bar x)/r_0$ to $B_1^+$.

For $x\in B^\prime\cap\Omega$, let $y=\Phi(\frac{x-\bar x}{r_0})\in B_1^+$. Obviously for any $\hat x, \tilde x\in B^\prime\cap\Omega$ we have
\begin{equation}\label{compdist0}(1/2r_0)|\hat x- \tilde x|\le \frac{1}{\sup|D\Phi^{-1}|r_0}|\hat x- \tilde x|\le |\hat y -\tilde y|\le \frac{\sup|D\Phi|}{r_0}|\hat x- \tilde x|\le (2/r_0)|\hat x- \tilde x|,
\end{equation}
and in particular for each $y\in B_1^+$
\begin{equation}\label{compdist}
(1/2r_0)\mathrm{dist}(x,\partial\Omega)\le\mathrm{dist}(y, B_1^0) \le (2/r_0)\mathrm{dist}(x,\partial\Omega).
\end{equation}

We now make the change of variable $y= \Phi(\frac{x-\bar x}{r_0})$, $x= \bar x+ r_0\Phi^{-1}(y)$ in the inequality $-\mathcal{L}u\ge f $ in $B^\prime\cap\Omega$. Setting $\hat u (y) = u(x)$, a straightforward computation shows that $\hat u$ satisfies an equation $-\widehat{\mathcal{L}}\hat u\ge \hat f $ in $B_1^+$, where the coefficient $\hat A$ satisfies \eqref{hyp1} with modified constants $0<\hat \lambda\le \hat \Lambda$ depending only on upper bounds for $|D\Phi|$, $|D\Phi^{-1}|$ (which in our case are bounded by 2), and coefficients $\hat b_1, \hat b_2, \hat c$ which satisfy \eqref{hyp2}. In addition,  the $C^\alpha$-norms of $\hat A,\hat b_1$ as well as the $L^q$-norms of $\hat b_2,\hat c$ in $B_1^+$ are bounded above by a universal constant which again depends only on $|D\Phi|$, $|D\Phi^{-1}|$. The latter fact is due to the choice of $r_0$ -- see  Proposition \ref{scaleinvtilde} and its proof where the particular case $\Phi=I$ is considered in detail. For instance, we have for some universal $C_0$
$$\begin{aligned}
\|\hat b_2\|_{L^q(B_1^+)}
&\le C_0 r_0\Bigl(\int_{B_1^+} |b_2(\bar x+r_0\Phi^{-1}(y))|^q\,dy\Bigr)^{1/q}\\
&\le C_0 r_0^{1-n/q}\Bigl(\int_{B^\prime\cap\Omega} |b_2(x)|^q |\mathrm{det}\,D\Phi|\,dx\Bigr)^{1/q}
\le C_0 r_0^{1-n/q}\|b_2\|_{L^q(B'\cap\Omega)}\le C_0
\end{aligned}
$$
(the last inequality follows from \eqref{defr0}). Note also that the change of variables sends $B$ to a subset $\omega$ of $B_{3/4}^+$, because of \eqref{compdist0}. Hence we can apply Theorem \ref{fromGSS} to $\hat u$ and $-\widehat{\mathcal{L}}\hat u\ge \hat f $ in $B_1^+$, getting \eqref{fixWBHI} for $\hat u(y) $ and $\hat d(y) = \mathrm{dist}(y, B_1^0)$. Changing back into the $x$ variable and using \eqref{compdist} we get
\begin{equation}\label{ineqbdry11}
\left(\int_{B\cap \Omega}\left(\frac{u}{d}\right)^\epsilon\right)^{1/\epsilon} \le
C_0 r_0^{n/\epsilon} \left( \inf_{B\cap \Omega} \frac{u}{d} +  r_0^{1-n/q}\|f\|_{L^{q}(B'\cap\Omega)}\right).
\end{equation}

\noindent {\it Case 2.} In the simpler case when a given ball $B=B_{r_0/2}(\bar x)$ is such that $\bar x\in \Omega$ (with dist$(\bar x,\partial\Omega)\ge 3r_0/4$) we use the change  $y= \frac{x-\bar x}{\tilde r_0}$,  $\tilde r_0 = \min\{r_0, \mathrm{dist}(\bar x,\partial\Omega)\}\in [3r_0/4, r_0]$, which sends $B_{\tilde r_0}$ to $B_1$ (and $B$ to $B_{\delta}$, $\delta\in[1/2,2/3]$), and the new operator again has uniformly bounded coefficients in the corresponding norms. By the interior weak Harnack inequality (as in Step 1 in the proof of  \cite[Theorem 2.1]{SS2}) this gives
\begin{equation}\label{ineqinside}
\left(\int_{B} u^\epsilon\,dx\right)^{1/\epsilon} \le C_0 {\tilde r_0}^{n/\epsilon} \left( \inf_{B} u + {\tilde r_0}^{2-n/q}\|f\|_{L^{q}(B')}\right).
\end{equation}
which implies
\begin{equation}\label{ineqbdry12}
\left(\int_{B}\left(\frac{u}{d}\right)^\epsilon\right)^{1/\epsilon} \le
C_0 r_0^{n/\epsilon} \left( \inf_{B} \frac{u}{d} +  r_0^{1-n/q}\|f\|_{L^{q}(B'\cap\Omega)}\right),
\end{equation}
since for $x\in B$ we have  dist$( x,\partial\Omega)\ge r_0/4$, hence $\sup_B d/\inf_B d\le (\inf_B d + r_0)/\inf_B d\le 5$.

Thanks to \eqref{ineqbdry11} and \eqref{ineqbdry12} we can repeat the iteration argument in \cite[p.12]{SS2}, and deduce that for any two balls $B_k, B_l$ from the covering, $k,l\in \{1,\ldots,I\}$,
\begin{equation}\label{iterineq}
\left(\int_{B_l\cap\Omega} u^\epsilon\,dx\right)^{1/\epsilon} \le C_0^N  \left( \left(\int_{B_k\cap\Omega} u^\epsilon\,dx\right)^{1/\epsilon} + { r_0}^{1-n/q+n/\epsilon}\|f\|_{L^{q}_{ul}(\Omega)}\right)
\end{equation}
(we note that $m$ and $d$ in \cite{SS2} are our $I$ and $N$ here). Summing over $l\in \{1,\ldots,I\}$, and then using  \eqref{ineqbdry11}-\eqref{ineqbdry12} again for each $k\in \{1,\ldots,I\}$,
along with \eqref{coverA0} and \eqref{relMr0}, we obtain~\eqref{sharpWBHI}.

The  full Harnack inequality \eqref{sharpBHI} follows exactly as in \cite[Step 3]{SS2} after the same rescaling as above and using \eqref{fixlocmax}, combined with \eqref{sharpWBHI}.
\end{proof}

\begin{rem} \label{remnoloss}
We note that the above argument actually yields $r_0^{1-n/q}$ and $\|f\|_{q,r_0,\Omega}$
on the right hand side of \eqref{sharpBHI},
instead of $D^{1-n/q}$ and $\|f\|_{L^q(\Omega)}$, respectively.
However, in view of the exponential factor in \eqref{sharpBHI}, of the inequality
$$\|f\|_{r_0,q,\Omega} \le \|f\|_{L^q(\Omega)}\le  c(n)(D/r_0)^{n/q} \|f\|_{r_0,q,\Omega}$$
(since one can cover $\Omega$ by $c(n)(D/r_0)^n$ balls of radius $r_0$),
and of the fact that
$(M+r^{-1}_\Omega)D=D/r_0$ by \eqref{relMr0},
there is no loss coming from this replacement.
\end{rem}

{\bf Acknowledgements.}
This work was partially done during visits of the second
 author to the Mathematics Departement of the
Pontificia Universidade Cat\'olica do Rio de Janeiro.
He thanks PUC-Rio for the hospitality
and also gratefully acknowledges financial support from the R\'eseau Franco-Br\'esilien de Math\'ematiques (RFBM). The first author was supported by grants CNPq 307772/2022-5 and FAPERJ E-26/204.317/2024.

\end{document}